\newtheorem{theorem}{Theorem}
\newtheorem{corollary}[theorem]{Corollary}
\newtheorem{definition}[theorem]{Definition}
\newtheorem{example}[theorem]{Example}
\newtheorem{proposition}[theorem]{Proposition}
\newtheorem{remark}[theorem]{Remark}
\newenvironment{proof}[1][Proof]{\noindent\textbf{#1.} }{\ \rule{0.5em}{0.5em}}
\begin{document}

\title{Fundamentals of bicomplex pseudoanalytic function theory: Cauchy
integral formulas, negative formal powers and Schr\"{o}dinger equations with
complex coefficients}
\author{Hugo M. Campos and Vladislav V. Kravchenko \\
{\small Departamento de Matem\'{a}ticas, CINVESTAV del IPN, Unidad
Queretaro, }\\
{\small Libramiento Norponiente No. 2000, Fracc. Real de Juriquilla,
Queretaro, }\\
{\small Qro. C.P. 76230 MEXICO e-mail: hugomcampos@hotmail.com;}\\
{\small vkravchenko@math.cinvestav.edu.mx\thanks{%
Research was supported by CONACYT, Mexico. Hugo Campos additionally
acknowledges the support by FCT, Portugal. }}}
\maketitle

\begin{abstract}
The study of the Dirac system and second-order elliptic equations with
complex-valued coefficients on the plane naturally leads to bicomplex
Vekua-type equations \cite{KrAntonio}, \cite{KrJPhys06}, \cite{CKM AACA}. To
the difference of complex pseudoanalytic (or generalized analytic) functions 
\cite{Bers Book}, \cite{Vekua} the theory of bicomplex pseudoanalytic
functions has not been developed. Such basic facts as, e.g., the similarity
principle or the Liouville theorem in general are no longer available due to
the presence of zero divisors in the algebra of bicomplex numbers.

In the present work we develop a theory of bicomplex pseudoanalytic formal
powers analogous to the developed by L. Bers \cite{Bers Book} and especially
that of negative formal powers. Combining the approaches of L. Bers \ and I.
N. Vekua with some additional ideas we obtain the Cauchy integral formula in
the bicomplex setting. In the classical complex situation this formula was
obtained under the assumption that the involved Cauchy kernel is global, a
very restrictive condition taking into account possible practical
applications, especially when the equation itself is not defined on the
whole plane. We show that the Cauchy integral formula remains valid with the
Cauchy kernel from a wider class called here the reproducing Cauchy kernels.
We give a complete characterization of this class. To our best knowledge
these results are new even for complex Vekua equations. We establish that
reproducing Cauchy kernels can be used to obtain a full set of negative
formal powers for the corresponding bicomplex Vekua equation and present an
algorithm which allows one their construction.

Bicomplex Vekua equations of a special form called main Vekua equations are
closely related to stationary Schr\"{o}dinger equations with complex-valued
potentials. We use this relation to establish useful connections between the
reproducing Cauchy kernels and the fundamental solutions for the Schr\"{o}%
dinger operators which allow one to construct the Cauchy kernel when the
fundamental solution is known and vice versa. Moreover, using these results
we construct the fundamental solutions for the Darboux transformed Schr\"{o}%
dinger operators.
\end{abstract}

\section{Introduction}

In the present work we study the bicomplex Vekua equations of the form 
\begin{equation}
\partial _{\overline{z}}W=aW+b\overline{W}  \label{VekuabicIntro}
\end{equation}%
where $a$, $b$ and $W$ are functions of the complex variable $z=x+jy$ and
take values in the algebra of bicomplex numbers. The conjugation $\overline{W%
}$ is with respect to the imaginary unit $j$ and $\partial _{\overline{z}}=%
\frac{1}{2}\left( \partial _{x}+j\partial _{y}\right) $. Every bicomplex
Vekua equation (\ref{VekuabicIntro}) is equivalent to a first order system 
\begin{align}
\partial _{x}u-\partial _{y}v& =\alpha u+\beta v  \label{s1} \\
\partial _{x}v+\partial _{y}u& =\gamma u+\delta v  \label{s2}
\end{align}%
where all the involved functions are complex, and vice versa, the system can
be written in the form (\ref{VekuabicIntro}). As always, whenever it is
possible, the introduction of an appropriate algebraic structure leads to a
deeper understanding of the system of equations and this is why it is
preferable to study (\ref{s1}), (\ref{s2}) in the form (\ref{VekuabicIntro}%
). Equation (\ref{VekuabicIntro}) arised in \cite{KrAntonio} in relation
with the Dirac system with electromagnetic and scalar potentials in the
two-dimensional case. In \cite{KrJPhys06} it was shown that equation (\ref%
{VekuabicIntro}) when $a\equiv 0$ and $b=\partial _{\overline{z}}f/f$ where $%
f$ is a scalar function (see Section \ref{Sect Bicomplex numbers}) is
closely related to the stationary Schr\"{o}dinger equation 
\begin{equation}
\Delta u=qu  \label{sch_intro}
\end{equation}%
with $q=\Delta f/f$. Vekua equations with coefficients $a$ and $b$ of this
special form are called Vekua equations of the main type or main Vekua
equations \cite{APFT}. In the classical complex case they are closely
related to so-called $p$-analytic functions (for the theory of $p$-analytic
functions we refer to the book \cite{Polozhy}, for the relation to the main
Vekua equation to \cite[Chapter 5]{APFT} and for their applications to \cite%
{AstalaPaivarinta}, \cite{Fischer}, \cite{KrPanalyt}, \cite{APFT}, \cite%
{Ramirez et al 2010}, \cite{ZabarankinKrokhmal2007}, \cite%
{ZabarankinUlitko2006}).

The relation between the main Vekua equation and the stationary Schr\"{o}%
dinger equation is of the same nature as the relation between the
Cauchy-Riemann system and the Laplace equation. The scalar part of the
solution of the main Vekua equation is necessarily a solution of the
corresponding Schr\"{o}dinger equation and vice versa, for any solution $u$
of the Schr\"{o}dinger equation its \textquotedblleft conjugate
metaharmonic\textquotedblright\ counterpart $v$ (see \cite[Chapter 3]{APFT})
can be constructed such that the obtained bicomplex function $W=u+jv$ will
be a solution of the main Vekua equation. The obtained counterpart in its
turn is a solution of another Schr\"{o}dinger equation the potential of
which is a Darboux transformation of the initial potential $q$. In \cite%
{Krpseudoan} (see also \cite{APFT}) the procedure for construction of $v$ by 
$u$ and vice versa was obtained in the explicit form. Recently Sh. Garuchava
in \cite{Garuchava2011} established a correspondence between the procedure
from \cite{Krpseudoan} and the two-dimensional Darboux transformation from 
\cite{MS}.

The theory of bicomplex Vekua equations is far from being complete though
publications attempting to extend the essential properties of pseudoanalytic
(or generalized analytic) functions onto the solutions of more general
systems on the plane are numerous. We refer to \cite{Youvaraj Jain 1990} for
some first results in studying (\ref{VekuabicIntro}) and emphasize that such
important facts as the similarity principle were obtained in that paper
under the overrestrictive condition on the coefficients ($b\equiv 0$). Under
this condition the bicomplex Vekua equation (\ref{VekuabicIntro}) reduces to
a pair of decoupled complex Vekua equations which obviously simplifies its
study. In \cite{Rochon2008} relations between classes of bicomplex Vekua
equations and complexified Schr\"{o}dinger equations were studied. In the
recent work \cite{Berglez2010} several results on bicomplex pseudoanalytic
functions from the point of view of Bauer-Peschl \ differential operators
can be found.

The main difficulty in studying the bicomplex Vekua equation comes from the
existence of zero divisors in the algebra of bicomplex numbers. For example,
when the solution $W$ of (\ref{VekuabicIntro}) does not have zero divisors
(the values of the function do not coincide with a zero divisor at any point 
$z$ of the domain of interest) for such $W$ we prove a similarity principle
(Theorem \ref{Similarity. P} below). Unfortunately in general this fact is
unavailable which forces one to look for alternative ideas for developing
the corresponding pseudoanalytic function theory.

In \cite{CKM AACA}, \cite{KrAntonio}, \cite{KrJPhys06}, \cite{APFT} it was
noticed that several constructive results from Bers' theory remain valid in
the bicomplex case. For example, if a generating sequence corresponding to
the bicomplex Vekua equation (\ref{VekuabicIntro}) is known, the
construction of corresponding positive formal powers can be performed by
means of Bers' algorithm based on the concept of the $(F,G)$-integration.
However to the difference from the complex pseudoanalytic function theory it
is not clear how to prove the expansion and Runge theorems in the bicomplex
case, the results ensuring the completeness of the formal powers in the
space of all solutions of the Vekua equation. Moreover, these results in the
classical complex case were obtained for global formal powers only (see the
definition in Subsection \ref{Subsect Formal powers}) meanwhile the only
system of formal powers whose explicit form is known corresponds to the case
of analytic functions, $\left\{ \left( z-z_{0}\right) ^{n}\right\}
_{0}^{\infty }$. Recently for a wide class of main bicomplex Vekua equations
the expansion and the Runge theorems were obtained in \cite{CKM AACA}, \cite%
{CKT 2012} using the approach based on so-called transmutation (or
transformation) operators. It is important to emphasize that in \cite{CKM
AACA} and \cite{CKT 2012} the results concerning the completeness of systems
of \textbf{local} formal powers were obtained which opened the way to apply
them in practical solution of boundary value and eigenvalue problems for
second-order elliptic equations with variable coefficients (see \cite%
{CCK2012} and \cite{CKR2011}).

The main subject of the present work is the study of negative formal powers
for the bicomplex Vekua equation and their applications. Based on the
results on the Cauchy kernels we obtain Cauchy integral formulas for
bicomplex pseudoanalytic functions. In the classical theory of complex
pseudoanalytic functions in fact there are two types of Cauchy integral
formulas. One is based on Cauchy kernels for an adjoint Vekua equation and
the other involves the Cauchy kernels for the initial Vekua equation (here
we call these two Cauchy integral formulas the first and the second
respectively). We obtain a relation between both Cauchy kernels and prove
both kinds of Cauchy integral formulas. It is important to mention that the
Cauchy kernels involved in the obtained Cauchy integral formulas are not
required to be global but instead belong to a much more general class which
we call reproducing Cauchy kernels. We give a complete characterization of
this class. To our best knowledge these results are new even for complex
Vekua equations. We establish that reproducing Cauchy kernels can be used to
obtain a full set of negative formal powers for the corresponding bicomplex
Vekua equation and present an algorithm which allows one their construction.

Bicomplex Vekua equations of a special form called main Vekua equations are
closely related to stationary Schr\"{o}dinger equations with complex-valued
potentials. We use this relation to establish direct connections between the
reproducing Cauchy kernels and the fundamental solutions for the Schr\"{o}%
dinger operators which allow one to construct the Cauchy kernel when the
fundamental solution is known and vice versa. Moreover, using these results
we construct the fundamental solutions for the Darboux transformed Schr\"{o}%
dinger operators. These results are also new in the context of the classical
complex Vekua equations and among other applications allow one to obtain in
a closed form a reproducing Cauchy kernel and a set of negative formal
powers for an important class of main Vekua equations.

The layout of the paper is as follows. In Section \ref{Sect Bicomplex
numbers} we introduce the necessary formalism concerning the bicomplex
numbers. Some of the facts presented here can be found in several sources,
e.g., \cite{KSbook}, \cite{RochonShapiro}, \cite{RochonTrembl}. Nevertheless
we needed to introduce a special though quite natural norm and hence prove
several related properties which probably are first published. In Section %
\ref{Sect Bicomplex pseudoanalytic} we introduce the necessary definitions
from pseudoanalytic function theory, prove several results concerning
bicomplex pseudoanalytic functions, like the mentioned above similarity
principle (for functions without zero divisors). By analogy with the complex
case we define formal powers and obtain their basic properties. We introduce
the main Vekua equation in relation with the stationary Schr\"{o}dinger
equation and obtain some auxiliary results for its solutions. In Section \ref%
{Sect Cauchy Int Formulas} the Cauchy integral formulas for bicomplex
pseudoanalytic functions are obtained and the characterization of the
reproducing Cauchy kernels is given. In Section \ref{Sect Negative Formal
Powers} we establish an important relation between the negative formal
powers corresponding to different bicomplex Vekua equations which in fact
leads to an algorithm for constructing the negative formal powers. We give
several applications of this result in Section \ref{Sect Applications} using
a valuable observation that for the main Vekua equation the adjoint and the
successor coincide. This leads to the possibility to construct a reproducing
Cauchy kernel for the Vekua equation from a known fundamental solution for a
related Schr\"{o}dinger equation and vice versa and also gives a method for
constructing the fundamental solutions for a chain of Darboux transformed
Schr\"{o}dinger operators. Finally, several examples of explicitly
calculated kernels and fundamental solutions are presented.

\section{Bicomplex numbers\label{Sect Bicomplex numbers}}

Together with the imaginary unit $i$ we consider another imaginary unit $j$,
such that 
\begin{equation}
j^{2}=i^{2}=-1\quad\text{and}\quad i\,j=j\,i.  \label{laws}
\end{equation}
We have then two copies of the field of complex numbers, $\mathbb{C}%
_{i}:=\left\{ a+ib,\text{ }\left\{ a,b\right\} \subset\mathbb{R}\right\} $
and $\mathbb{C}_{j}:=\left\{ a+jb,\text{ }\left\{ a,b\right\} \subset\mathbb{%
R}\right\} $. The expressions of the form $W=u+jv$ where $\left\{
u,v\right\} \subset\mathbb{C}_{i}$ are called bicomplex numbers. The
conjugation with respect to $j$ we denote as $\overline{W}=u-jv$. The
components $u$ and $v$ will be called the scalar and the vector part of $W$
respectively. We will use the notation $u=\func{Sc}W$ and $v=\func{Vec}W$.

The set of all bicomplex numbers with a natural operation of addition and
with the multiplication defined by the laws (\ref{laws}) represents a
commutative ring with a unit. We denote it by $\mathbb{B}$. An element $W\in%
\mathbb{B}$ is invertible if and only if $W\overline{W}\neq0$ and the
inverse element \ is defined by the equality $W^{-1}=\dfrac{\overline{W}}{%
\overline{W}W}$.

Let $\mathcal{R}(\mathbb{B)}$ be the set formed by the invertible elements
of $\mathbb{B}$ and $\sigma(\mathbb{B)}$ denote the generalized zeros of $%
\mathbb{B}$ (zero divisors), that is%
\begin{equation*}
\sigma(\mathbb{B)=}\left\{ W\in\mathbb{B}\text{: }W\neq0\text{ and }W%
\overline{W}=0\right\} .
\end{equation*}

It is convenient to introduce the pair of idempotents $P^{+}=\frac{1}{2}%
(1+ij)$ and $P^{-}=\frac{1}{2}(1-ij)$ ($\left( P^{\pm}\right) ^{2}=P^{\pm}$%
). As it can be verified directly, $P^{\pm}\in\sigma(\mathbb{B)}$ and $%
P^{+}+P^{-}=1$.

\begin{proposition}
Let $W\in \mathbb{B}$.Then

\begin{description}
\item[$(i)$] there exist the unique numbers $W^{+}$, $W^{-}\in \mathbb{C}%
_{i} $ such that $W=P^{+}W^{+}+P^{+}W^{-}$ which can be computed from $W$ as
follows%
\begin{equation}
W^{\pm }=\func{Sc}W\mp i\limfunc{Vec}W\text{,}  \label{W+-}
\end{equation}

\item[$(ii)$] a nonzero element $W\ $belongs to $\sigma (\mathbb{B)}$ iff $%
W=P^{+}W^{+}$ or $W=P^{-}W^{-}$.
\end{description}
\end{proposition}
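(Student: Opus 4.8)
The plan is to base everything on the orthogonal idempotent structure of the pair $P^{\pm}$. Besides the three properties already recorded in the excerpt, $\left( P^{\pm}\right) ^{2}=P^{\pm}$ and $P^{+}+P^{-}=1$, I would first note the orthogonality relation $P^{+}P^{-}=\frac{1}{4}\left( 1-\left( ij\right) ^{2}\right) =0$, since it is precisely this identity that decouples the two summands in the claimed decomposition. I would also use throughout that $\mathbb{C}_{i}$ is central in $\mathbb{B}$ (because $ij=ji$ by $(\ref{laws})$), so that scalar factors from $\mathbb{C}_{i}$ may be moved freely past the idempotents.

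For part $(i)$ I would prove existence, uniqueness and the explicit formula $(\ref{W+-})$ in a single step, by expanding a trial decomposition rather than verifying a guessed one. Writing $W=P^{+}a+P^{-}b$ with unknown $a,b\in\mathbb{C}_{i}$ and expanding the idempotents gives $P^{+}a=\frac{a}{2}+j\frac{ia}{2}$ and $P^{-}b=\frac{b}{2}-j\frac{ib}{2}$, whence $\func{Sc}W=\frac{a+b}{2}$ and $\func{Vec}W=\frac{i(a-b)}{2}$. Solving this $2\times 2$ linear system over the field $\mathbb{C}_{i}$ forces $a=\func{Sc}W-i\func{Vec}W$ and $b=\func{Sc}W+i\func{Vec}W$, which are exactly $W^{+}$ and $W^{-}$ of $(\ref{W+-})$; the solution being forced yields uniqueness, and substituting it back yields existence. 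The subtlety I want to flag here is that one must \emph{not} attempt to prove uniqueness by multiplying $W=P^{+}a+P^{-}b$ through by $P^{+}$ and cancelling, because $P^{+}$ is a zero divisor and such a cancellation is illegitimate; extracting scalar and vector parts avoids touching the idempotents at all and is the clean route.

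For part $(ii)$ the crux is to rewrite the product $W\overline{W}$ in these idempotent coordinates. Since conjugation with respect to $j$ negates $\func{Vec}W$, formula $(\ref{W+-})$ shows that it interchanges the two components, i.e. $\overline{W}=P^{+}W^{-}+P^{-}W^{+}$. Multiplying $\left( P^{+}W^{+}+P^{-}W^{-}\right) \left( P^{+}W^{-}+P^{-}W^{+}\right) $ and applying $\left( P^{\pm}\right) ^{2}=P^{\pm}$, $P^{+}P^{-}=0$ and $P^{+}+P^{-}=1$, the two cross terms vanish and the surviving idempotents sum to $1$, giving the clean identity $W\overline{W}=W^{+}W^{-}\in\mathbb{C}_{i}$. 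Because $\mathbb{C}_{i}$ is a field, $W\overline{W}=0$ holds iff $W^{+}=0$ or $W^{-}=0$. Combining this with the uniqueness from $(i)$ (so that $W=0$ iff $W^{+}=W^{-}=0$), a nonzero $W$ lies in $\sigma(\mathbb{B})$ exactly when precisely one of $W^{\pm}$ vanishes, that is when $W=P^{-}W^{-}$ or $W=P^{+}W^{+}$, which is the assertion.

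I do not expect a genuine obstacle here; the computations are short and the only real pitfall is the temptation to cancel zero divisors in the uniqueness argument, which the scalar/vector extraction sidesteps entirely. The identity $W\overline{W}=W^{+}W^{-}$ is the single fact doing the work in part $(ii)$, and it will also be convenient to keep in mind for later sections where products and inverses of bicomplex functions must be handled.
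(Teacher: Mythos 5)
Your proposal is correct and follows the same route as the paper: part $(i)$ is an elementary computation (the paper simply calls it straightforward, while you solve the $2\times 2$ linear system explicitly), and part $(ii)$ rests on exactly the identity $W\overline{W}=W^{+}W^{-}$ that the paper cites, which you derive in full via $P^{+}P^{-}=0$ and the fact that conjugation swaps the idempotent components. Your added remark about not cancelling the zero divisor $P^{+}$ in the uniqueness argument is a sensible caution but does not change the substance.
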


\begin{proof}
$(i)$ Straightforward.

$(ii)$ It follows directly from the definition of $\sigma (\mathbb{B)}$ and
from the equality%
\begin{equation*}
W\overline{W}=W^{+}W^{-}\text{.}
\end{equation*}
\end{proof}

For $W=P^{+}W^{+}+P^{-}W^{-}\in\mathbb{B}$ we introduce the notation 
\begin{equation}
\left\vert W\right\vert =\frac{1}{2}\left( \left\vert W^{+}\right\vert _{%
\mathbb{C}_{i}}+\left\vert W^{-}\right\vert _{\mathbb{C}_{i}}\right) ,
\label{bicom_norm}
\end{equation}
where $\left\vert \cdot\right\vert _{\mathbb{C}_{i}}$ is the usual norm in $%
\mathbb{C}_{i}$.

\begin{proposition}
\label{Prop_norm}The function defined in (\ref{bicom_norm}) is a norm in $%
\mathbb{B}$ and possesses the following properties.

\begin{description}
\item[$(i)$] If $W\in\mathbb{C}_{i}$ (that is, $VecW=0$) then $\left\vert
W\right\vert =\left\vert W\right\vert _{\mathbb{C}_{i}}$.

\item[$(ii)$] If $W\in\mathbb{C}_{j}$ and $V\in\mathbb{B}$ then $\left\vert
W\right\vert =\left\vert W\right\vert _{\mathbb{C}_{j}}$ and $\left\vert
WV\right\vert =\left\vert W\right\vert \left\vert V\right\vert $.

\item[$(iii)$] If $W$, $V\in\mathbb{B}$ then $\left\vert WV\right\vert
\leq2\left\vert W\right\vert \left\vert V\right\vert $ and%
\begin{equation*}
\left\vert \limfunc{Sc}W\right\vert \leq\left\vert W\right\vert
,\quad\left\vert \limfunc{Vec}W\right\vert \leq\left\vert W\right\vert
\quad,\left\vert W\right\vert \leq\left\vert \limfunc{Sc}W\right\vert
+\left\vert \limfunc{Vec}W\right\vert .
\end{equation*}
\end{description}
\end{proposition}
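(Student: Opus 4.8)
The plan is to reduce every assertion to an elementary fact in $\mathbb{C}_i$ by working throughout in the idempotent representation $W=P^{+}W^{+}+P^{-}W^{-}$ supplied by the previous Proposition. First I would record the two structural identities that make this reduction work. Since the map $W\mapsto(W^{+},W^{-})$ is linear, addition is componentwise, $(W+V)^{\pm}=W^{\pm}+V^{\pm}$; and since $(P^{\pm})^{2}=P^{\pm}$ while $P^{+}P^{-}=0$, multiplication is componentwise as well,
\begin{equation*}
WV=P^{+}W^{+}V^{+}+P^{-}W^{-}V^{-},
\end{equation*}
so that $(WV)^{\pm}=W^{\pm}V^{\pm}$. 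With these two identities the quantity $|W|=\tfrac12(|W^{+}|_{\mathbb{C}_i}+|W^{-}|_{\mathbb{C}_i})$ is just half the $\ell^{1}$-norm of the pair $(W^{+},W^{-})\in\mathbb{C}_i^{2}$, and its multiplicative behaviour is governed entirely by products in $\mathbb{C}_i$.

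To verify that $|\cdot|$ is a norm I would check the three axioms termwise. Nonnegativity is immediate, and $|W|=0$ forces $W^{+}=W^{-}=0$, hence $W=0$ by the uniqueness part of the previous Proposition. Homogeneity and the triangle inequality are inherited from $|\cdot|_{\mathbb{C}_i}$ via componentwise addition, using that a scalar $\lambda\in\mathbb{C}_i$ satisfies $\lambda^{\pm}=\lambda$. Properties $(i)$ and $(ii)$ then amount to computing $|W^{\pm}|_{\mathbb{C}_i}$ in the two special cases. If $\operatorname{Vec}W=0$ then $W^{\pm}=\operatorname{Sc}W=W$, whence $|W|=|W|_{\mathbb{C}_i}$, which is $(i)$. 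If $W\in\mathbb{C}_j$, so that $\operatorname{Sc}W$ and $\operatorname{Vec}W$ are real, then by (\ref{W+-}) the numbers $W^{\pm}=\operatorname{Sc}W\mp i\operatorname{Vec}W$ are complex conjugates in $\mathbb{C}_i$, so $|W^{+}|_{\mathbb{C}_i}=|W^{-}|_{\mathbb{C}_i}=|W|_{\mathbb{C}_j}$ and $|W|=|W|_{\mathbb{C}_j}$.

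This equality $|W^{+}|=|W^{-}|$ is exactly what upgrades the general submultiplicative estimate to an exact one: using $(WV)^{\pm}=W^{\pm}V^{\pm}$,
\begin{equation*}
|WV|=\tfrac12\bigl(|W^{+}||V^{+}|+|W^{-}||V^{-}|\bigr)=|W|\cdot\tfrac12\bigl(|V^{+}|+|V^{-}|\bigr)=|W||V|,
\end{equation*}
which proves $(ii)$. For $(iii)$, the bound $|WV|\le 2|W||V|$ follows from enlarging the diagonal sum to the full product of sums,
\begin{equation*}
|W^{+}||V^{+}|+|W^{-}||V^{-}|\le\bigl(|W^{+}|+|W^{-}|\bigr)\bigl(|V^{+}|+|V^{-}|\bigr)=2|W|\cdot 2|V|,
\end{equation*}
and the three scalar/vector inequalities follow from inverting (\ref{W+-}) as $\operatorname{Sc}W=\tfrac12(W^{+}+W^{-})$ and $\operatorname{Vec}W=\tfrac{i}{2}(W^{+}-W^{-})$ and applying the triangle inequality in $\mathbb{C}_i$ in both directions.

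The computations here are all routine, so the only genuinely noteworthy point — and the reason the statement carries a constant $2$ rather than asserting $|WV|\le|W||V|$ — is that this constant is sharp and $|\cdot|$ is not submultiplicative on all of $\mathbb{B}$. Indeed, the idempotents are zero divisors with $|P^{\pm}|=\tfrac12$ (one checks $(P^{+})^{+}=1$, $(P^{+})^{-}=0$) and $(P^{\pm})^{2}=P^{\pm}$, so $|P^{+}P^{+}|=\tfrac12=2|P^{+}||P^{+}|$. This is the conceptual reason why $(ii)$ singles out the $\mathbb{C}_j$-factors, for which the cross terms in the product of sums vanish and multiplicativity becomes exact, whereas a generic $\mathbb{B}$-factor can only be controlled up to the factor $2$.
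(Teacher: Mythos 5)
Your proof is correct and follows essentially the same route as the paper: the idempotent decomposition $W=P^{+}W^{+}+P^{-}W^{-}$ with componentwise multiplication $(WV)^{\pm}=W^{\pm}V^{\pm}$, and the same enlargement of the diagonal sum $|W^{+}||V^{+}|+|W^{-}||V^{-}|$ to the full product of sums to obtain $|WV|\le 2|W||V|$. The only difference is completeness: you write out the parts the paper dismisses as straightforward (the norm axioms, $(i)$, $(ii)$, and the scalar/vector inequalities), and you add the worthwhile remark that $|P^{+}P^{+}|=\tfrac12=2|P^{+}|^{2}$ shows the constant $2$ is sharp, so the norm cannot be submultiplicative on all of $\mathbb{B}$.
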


\begin{proof}
The proof of $(i)$, $(ii)$ and of the last part of $(iii)$ is
straightforward. Let $W=P^{+}W^{+}+P^{+}W^{-}$ and $V=P^{+}V^{+}+P^{+}V^{-}%
\in\mathbb{B}$. Then%
\begin{equation*}
WV=P^{+}W^{+}V^{+}+P^{-}W^{-}V^{-}
\end{equation*}
and%
\begin{equation*}
\left\vert WV\right\vert =\frac{1}{2}\left( \left\vert W^{+}V^{+}\right\vert
+\left\vert W^{-}V^{-}\right\vert \right) \leq\frac{1}{2}\left( \left\vert
W^{+}\right\vert +\left\vert W^{-}\right\vert \right) \left( \left\vert
V^{+}\right\vert +\left\vert V^{-}\right\vert \right) =2\left\vert
W\right\vert \left\vert V\right\vert .
\end{equation*}
\end{proof}

\begin{proposition}
\label{R(B)_open_prop}$\mathcal{R}(\mathbb{B)}$ is open in $\mathbb{B}.$
\end{proposition}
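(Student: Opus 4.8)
The goal is to show $\mathcal{R}(\mathbb{B})$, the set of invertible bicomplex numbers, is open. The plan is to exploit the idempotent decomposition $W = P^{+}W^{+} + P^{-}W^{-}$ established in the first Proposition. By part $(ii)$ of that Proposition, a nonzero $W$ is a zero divisor exactly when one of its components $W^{+}, W^{-} \in \mathbb{C}_{i}$ vanishes; consequently $W$ is invertible precisely when $W^{+} \neq 0$ \emph{and} $W^{-} \neq 0$. First I would make this explicit, so that membership in $\mathcal{R}(\mathbb{B})$ is reduced to a pair of simultaneous nonvanishing conditions on the complex components.

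With that reformulation in hand, the strategy is to fix an arbitrary $W_{0} \in \mathcal{R}(\mathbb{B})$ with components $W_{0}^{+}, W_{0}^{-}$, both nonzero, and produce an explicit radius $r > 0$ such that the ball $\{W : |W - W_{0}| < r\}$ lies entirely in $\mathcal{R}(\mathbb{B})$. The natural choice is $r = \tfrac{1}{2}\min\bigl(|W_{0}^{+}|_{\mathbb{C}_{i}},\, |W_{0}^{-}|_{\mathbb{C}_{i}}\bigr)$, which is strictly positive. The key computation uses the norm $(\ref{bicom_norm})$: for the difference $W - W_{0} = P^{+}(W^{+} - W_{0}^{+}) + P^{-}(W^{-} - W_{0}^{-})$ we have
\begin{equation*}
|W - W_{0}| = \tfrac{1}{2}\bigl( |W^{+} - W_{0}^{+}|_{\mathbb{C}_{i}} + |W^{-} - W_{0}^{-}|_{\mathbb{C}_{i}} \bigr),
\end{equation*}
so $|W - W_{0}| < r$ forces each individual component difference to satisfy $|W^{\pm} - W_{0}^{\pm}|_{\mathbb{C}_{i}} < 2r \le |W_{0}^{\pm}|_{\mathbb{C}_{i}}$. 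By the reverse triangle inequality in $\mathbb{C}_{i}$ this yields $|W^{\pm}|_{\mathbb{C}_{i}} > 0$, hence both components of $W$ are nonzero and $W \in \mathcal{R}(\mathbb{B})$.

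I do not expect any genuine obstacle here: the essential point is that the chosen norm decouples completely across the idempotent components, so that openness in $\mathbb{B}$ reduces to the elementary fact that $\mathbb{C}_{i} \setminus \{0\}$ is open in each factor. The only care needed is in recording the precise form of the norm on the difference $W - W_{0}$ and keeping track of the factor $\tfrac{1}{2}$ when passing between $|W - W_{0}|$ and the componentwise distances; this is exactly where the explicit formula $(\ref{bicom_norm})$ does the work, and it is why introducing the special norm (as the authors note in the introduction) is convenient. An alternative, coordinate-free route would observe that $W \mapsto W\overline{W}$ is continuous and $\mathcal{R}(\mathbb{B}) = \{W : W\overline{W} \neq 0\}$ is the preimage of an open set, but the component argument is more transparent and self-contained given the tools already developed.
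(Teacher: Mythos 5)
Your proposal is correct and takes essentially the same approach as the paper: the same idempotent decomposition, the same radius $r=\tfrac{1}{2}\min\left( \left\vert W^{+}\right\vert ,\left\vert W^{-}\right\vert \right) $, and the same componentwise norm formula. The only difference is one of phrasing --- you show directly (via the reverse triangle inequality) that every element of the ball has both components nonzero, while the paper equivalently shows that every non-invertible element lies at distance at least $r$ from the center.
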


\begin{proof}
Let $W\in \mathcal{R}(\mathbb{B)}$. Then $W=P^{+}W^{+}+P^{-}W^{-}$ where $%
W^{+}\neq 0$ and $W^{-}\neq 0$. Define $r=\min \left\{ \frac{\left\vert
W^{+}\right\vert }{2}\text{,}\frac{\left\vert W^{-}\right\vert }{2}\right\} $
and consider the open ball $B(W,r)$. If $V\in \left( \mathcal{R}(\mathbb{B)}%
\right) ^{c}\mathbb{=}\sigma (\mathbb{B)\cup }\left\{ 0\right\} $ then $%
V=P^{+}V^{+}$ for which we have 
\begin{equation*}
\left\vert W-V\right\vert =\frac{1}{2}\left( \left\vert
W^{+}-V^{+}\right\vert +\left\vert W^{-}\right\vert \right) \geq \frac{%
\left\vert W^{-}\right\vert }{2}\geq r
\end{equation*}%
or $V=P^{-}V$ for which we have%
\begin{equation*}
\left\vert W-V\right\vert =\frac{1}{2}\left( \left\vert W^{+}\right\vert
+\left\vert W^{-}-V^{-}\right\vert \right) \geq \frac{\left\vert
W^{+}\right\vert }{2}\geq r.
\end{equation*}%
Thus, $V\notin B(W,r)$ and hence $B(W,r)\subset \mathcal{R(}\mathbb{B)}$
which proves that $\mathcal{R}(\mathbb{B)}$ is open.
\end{proof}

An exponential function of a bicomplex variable is defined by the equality 
\begin{equation*}
E\left[ W\right] :=P^{+}e^{W^{+}}+P^{-}e^{W^{-}},\quad W\in\mathbb{B}.
\end{equation*}

\begin{proposition}
\label{Ez_prop}

\begin{description}
\item[$(i)$] $E\left[ W+V\right] =E\left[ W\right] E\left[ V\right] $, $%
\forall\,W$, $V\in\mathbb{B}$, in particular, $E\left[ W\right] $ is
invertible with the inverse given by $E\left[ -W\right] $;

\item[$(ii)$] $\left\vert E\left[ W\right] -1\right\vert \leq e^{2\left\vert
W\right\vert }-1$ \ for all $\,W\in\mathbb{B}$.
\end{description}
\end{proposition}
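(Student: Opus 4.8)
The plan is to pass to the idempotent representation $W=P^{+}W^{+}+P^{-}W^{-}$, in which the bicomplex exponential decouples into two independent complex exponentials; everything then reduces to elementary facts about the exponential in $\mathbb{C}_{i}$ together with two structural properties of the decomposition $W\mapsto(W^{+},W^{-})$. For part $(i)$ I would first record that this decomposition is both additive and multiplicative. Additivity, $(W+V)^{\pm}=W^{\pm}+V^{\pm}$, is immediate from the linear formula (\ref{W+-}), since $\func{Sc}$ and $\func{Vec}$ are linear. Multiplicativity is exactly the identity $WV=P^{+}W^{+}V^{+}+P^{-}W^{-}V^{-}$ already established in the proof of Proposition \ref{Prop_norm} from $(P^{\pm})^{2}=P^{\pm}$ and $P^{+}P^{-}=0$. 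With these in hand the claim is a one-line computation: by additivity $E[W+V]=P^{+}e^{W^{+}+V^{+}}+P^{-}e^{W^{-}+V^{-}}$, while by multiplicativity $E[W]E[V]=P^{+}e^{W^{+}}e^{V^{+}}+P^{-}e^{W^{-}}e^{V^{-}}$, and these coincide because $e^{\zeta+\eta}=e^{\zeta}e^{\eta}$ in $\mathbb{C}_{i}$. Taking $V=-W$ gives $E[W]E[-W]=E[0]=P^{+}+P^{-}=1$, so, using commutativity of $\mathbb{B}$, $E[-W]$ is the inverse of $E[W]$.

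For part $(ii)$ I would subtract the unit inside the decomposition: since $1=P^{+}+P^{-}$, we have $E[W]-1=P^{+}(e^{W^{+}}-1)+P^{-}(e^{W^{-}}-1)$, so the definition (\ref{bicom_norm}) yields $|E[W]-1|=\frac{1}{2}\bigl(|e^{W^{+}}-1|_{\mathbb{C}_{i}}+|e^{W^{-}}-1|_{\mathbb{C}_{i}}\bigr)$. Next, the standard power-series bound $|e^{\zeta}-1|_{\mathbb{C}_{i}}\leq e^{|\zeta|_{\mathbb{C}_{i}}}-1$ applied to $\zeta=W^{\pm}$ reduces the assertion to a purely real inequality: writing $a=|W^{+}|_{\mathbb{C}_{i}}\geq 0$ and $b=|W^{-}|_{\mathbb{C}_{i}}\geq 0$, so that $2|W|=a+b$, it remains to show $\frac{1}{2}(e^{a}+e^{b})-1\leq e^{a+b}-1$.

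This last step is the only nonroutine point, and I expect it to be the main (if minor) obstacle. It amounts to $\frac{1}{2}(e^{a}+e^{b})\leq e^{a+b}$ for $a,b\geq 0$, which follows at once from $e^{a}\leq e^{a+b}$ and $e^{b}\leq e^{a+b}$ (each using $e^{b}\geq 1$, respectively $e^{a}\geq 1$): the average of $e^{a}$ and $e^{b}$ is then bounded by $e^{a+b}$. Chaining the three estimates gives $|E[W]-1|\leq e^{2|W|}-1$, completing the proof.
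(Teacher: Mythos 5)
Your proof is correct and follows essentially the same route as the paper: the paper's proof of part $(ii)$ consists precisely of the decomposition $E\left[ W\right] -1=P^{+}\left( e^{W^{+}}-1\right) +P^{-}\left( e^{W^{-}}-1\right) $ together with the bound $\left\vert e^{W^{\pm }}-1\right\vert \leq e^{\left\vert W^{\pm }\right\vert }-1$, and it treats part $(i)$ as straightforward, exactly as you verify it via the additivity and multiplicativity of the idempotent decomposition. The only difference is that you spell out the final elementary inequality $\frac{1}{2}\left( e^{a}+e^{b}\right) \leq e^{a+b}$ for $a,b\geq 0$, which the paper leaves implicit.
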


\begin{proof}
$(i)$ Straightforward.

$(ii)$ Follows from the equality%
\begin{equation*}
E\left[ W\right] -1=P^{+}\left( e^{W^{+}}-1\right) +P^{-}\left(
e^{W^{-}}-1\right)
\end{equation*}
and from the fact that $\left\vert e^{W^{\pm}}-1\right\vert \leq
e^{\left\vert W^{\pm}\right\vert }-1$.
\end{proof}

\section{Bicomplex pseudoanalytic functions\label{Sect Bicomplex
pseudoanalytic}}

\subsection{Generating pair and first properties of bicomplex pseudoanalytic
functions}

\bigskip

\begin{definition}
A pair of $\mathbb{B}$-valued functions $F$ and $G$ possessing H\"{o}lder
continuous partial derivatives in $\Omega\subset\mathbb{C}_{j}$ with respect
to the real variables $x$ and $y$ is said to be a generating pair if it
satisfies the inequality%
\begin{equation}
\func{Vec}(\overline{F}G)\neq0\qquad\text{in }\Omega.  \label{condGenPair}
\end{equation}
\end{definition}

Condition (\ref{condGenPair}) implies that every bicomplex function $W$
defined in a subdomain of $\Omega$ admits the unique representation $W=\phi
F+\psi G$ where the functions $\phi$ and $\psi$ are scalar (=$\mathbb{C}_{i}$%
-valued).

\bigskip

Assume that $(F,G)$ is a generating pair in a domain $\Omega$.

\begin{definition}
Let the $\mathbb{B}$-valued function $W$ be defined in a neighborhood of $%
z_{0}\in\Omega$. In a complete analogy with the complex case we say that at $%
z_{0}$ the function $W$ possesses the $(F,G)$-derivative $\overset{\cdot}{W}%
(z_{0})$ if the (finite) limit 
\begin{equation}
\overset{\circ}{W}(z_{0})=\lim_{z\rightarrow z_{0}}\frac{W(z)-\lambda
_{0}F(z)-\mu_{0}G(z)}{z-z_{0}}  \label{derivative_def}
\end{equation}
exists where $\lambda_{0}$ and $\mu_{0}$ are the unique scalar constants
such that $W(z_{0})=\lambda_{0}F(z_{0})+\mu_{0}G(z_{0})$.
\end{definition}

We will also use the notation $\frac{d_{(F,G)}W(z_{0})}{dz}=\overset{\circ}{W%
}(z_{0})$.

Let us introduce the bicomplex operators 
\begin{equation}
\partial_{\overline{z}}=\frac{1}{2}\left( \partial_{x}+j\partial_{y}\right)
,\quad\partial_{z}=\frac{1}{2}\left( \partial_{x}-j\partial_{y}\right)
\label{CR_Op_Bicomp}
\end{equation}
acting on $\mathbb{B}$-valued functions. Similarly to the complex case (see,
e.g., \cite[Chapter 2]{APFT}) it is easy to show that if $\overset{\circ}{W}%
(z_{0})$ exists then at $z_{0}$, $\partial_{\overline{z}}W$ and $%
\partial_{z}W$ exist and the equations 
\begin{equation}
\partial_{\overline{z}}W=aW+b\overline{W}  \label{Vekua_equation (F,G)}
\end{equation}
and 
\begin{equation}
\overset{\circ}{W}=\partial_{z}W-AW-B\overline{W}
\label{derivative_with_characteristic}
\end{equation}
hold, where $a$, $b$, $A$ and $B$ are the \emph{characteristic coefficients}
of the pair $(F,G)$ defined by the formulas 
\begin{equation*}
a=a_{(F,G)}=-\frac{\overline{F}\,\partial_{\overline{z}}G-\overline {G}%
\,\partial_{\overline{z}}F}{F\overline{G}-\overline{F}G},\qquad b=b_{(F,G)}=%
\frac{F\,\partial_{\overline{z}}G-G\,\partial_{\overline{z}}F}{F\overline{G}-%
\overline{F}G},
\end{equation*}

\begin{equation*}
A=A_{(F,G)}=-\frac{\overline{F}\,\partial_{z}G-\overline{G}\,\partial_{z}F}{F%
\overline{G}-\overline{F}G},\qquad B=B_{(F,G)}=\frac{F\,\partial
_{z}G-G\,\partial_{z}F}{F\overline{G}-\overline{F}G}.
\end{equation*}
Notice that $F\overline{G}-\overline{F}G=-2j\func{Vec}(\overline {F}G)\neq0$.

If $\partial_{\overline{z}}W$ and $\partial_{z}W$ exist and are continuous
in some neighborhood of $z_{0}$, and if (\ref{Vekua_equation (F,G)}) holds
at $z_{0}$, then $\overset{\circ}{W}(z_{0})$ exists, and (\ref%
{derivative_with_characteristic}) holds. Let us notice that $F$ and $G$
possess $(F,G)$-derivatives, $\overset{\circ}{F}\equiv\overset{\circ}{G}%
\equiv0$, and the following equalities are valid which determine the
characteristic coefficients uniquely%
\begin{equation*}
\partial_{\overline{z}}F=aF+b\overline{F},\quad\partial_{\overline{z}}G=aG+b%
\overline{G},
\end{equation*}%
\begin{equation*}
\partial_{z}F=AF+B\overline{F},\quad\partial_{z}G=AG+B\overline{G}.
\end{equation*}

\begin{definition}
A function will be called $\mathbb{B}$-$(F,G)$-pseudoanalytic (or, simply, $%
\mathbb{B}$-pseudoanalytic, if there is no danger of confusion) in a domain $%
\Omega$ if $\overset{\circ}{W}(z)$ exists everywhere in $\Omega$.
\end{definition}

\begin{remark}
When $F\equiv1$ and $G\equiv j$ the corresponding bicomplex Vekua equation
is 
\begin{equation}
\partial_{\overline{z}}W=0,  \label{C-Rbic}
\end{equation}
and its study in fact reduces to the complex analytic function theory. To
see this notice that with the aid of the idempotents $P^{\pm}$ the operator $%
\partial_{\overline{z}}$ admits the following representation%
\begin{equation}
\partial_{\overline{z}}=P^{+}d_{z}+P^{-}d_{\overline{z}}  \label{CR_op_Fact}
\end{equation}
where 
\begin{equation*}
d_{z}=\frac{1}{2}(\partial_{x}-i\partial_{y})\text{\quad}d_{\overline{z}}=%
\frac{1}{2}(\partial_{x}+i\partial_{y})
\end{equation*}
are the complex Cauchy-Riemann operators. Then the function $%
W=P^{+}W^{+}+P^{-}W^{-}$ satisfies (\ref{C-Rbic}) iff the scalar functions $%
W^{+}$ and $W^{-}$ (defined by (\ref{W+-})) are antiholomorphic and
holomorphic respectively.
\end{remark}

\begin{remark}
\label{remark_decouple_vekua}The bicomplex Vekua equation (\ref%
{Vekua_equation (F,G)}) is equivalent to the following first order elliptic
system%
\begin{align*}
\partial _{x}u-\partial _{y}v& =(\alpha +\theta )u+\left( \gamma -\beta
\right) v \\
\partial _{x}v+\partial _{y}u& =\left( \beta +\gamma \right) u+\left( \alpha
-\theta \right) v
\end{align*}%
where $u=\limfunc{Sc}W$, $v=\limfunc{Vec}W$, $\alpha =2\limfunc{Sc}a$, $%
\beta =2\limfunc{Vec}a$, $\theta =2\limfunc{Sc}b$, $\gamma =2\limfunc{Vec}b$
are $\mathbb{C}_{i}$-valued functions. We stress that if $a$ and $b$ are $%
\mathbb{C}_{j}$-valued functions then $\alpha $, $\beta $, $\theta $ and $%
\gamma $ are real valued functions and the bicomplex Vekua equation (\ref%
{Vekua_equation (F,G)}) can be decoupled into a pair of independent complex
Vekua equations. For example, $W$ is a solution of (\ref{Vekua_equation
(F,G)}) iff $H_{1}:=\func{Re}\limfunc{Sc}W+j\limfunc{Re}\limfunc{Vec}W$ and $%
H_{2}:=\func{Im}\limfunc{Sc}W+j\func{Im}\limfunc{Vec}W$ satisfy%
\begin{equation}
\partial _{\overline{z}}H_{1}=aH_{1}+b\overline{H_{1}},\quad \partial _{%
\overline{z}}H_{2}=aH_{2}+b\overline{H_{2}}.  \label{decoupledvekua}
\end{equation}%
Notice that all the functions involved in (\ref{decoupledvekua}) are $%
\mathbb{C}_{j}$-valued which means that the theory developed by L. Bers and
I. Vekua on complex Vekua equations (\cite{Bers Book}, \cite{Vekua}, \cite%
{APFT}) can be applied to study (\ref{decoupledvekua}) and therefore to the
study of (\ref{Vekua_equation (F,G)}) in this special case. In general the
reduction of a bicomplex Vekua equation to a pair of decoupled complex Vekua
equations is not possible.
\end{remark}

Let $\Omega \subset \mathbb{R}^{2}$ and $\mathcal{A}$, $\mathcal{B}$ be the
operators acting on complex functions by the rules 
\begin{equation*}
(\mathcal{A}\Phi \mathcal{)(}z)\mathcal{=}\frac{1}{\pi }\int_{\Omega }\frac{%
\Phi (z)}{z-\zeta }d\Omega _{\zeta }
\end{equation*}%
and%
\begin{equation*}
\mathcal{B=CAC}
\end{equation*}%
where $\mathcal{C}$ is the operator of complex conjugation (with respect to $%
i$). We recognize immediately that $\mathcal{A}$ and $\mathcal{B}$ are the
right inverse operators of $d_{\overline{z}}$ and $d_{z}$ respectively.
Consider the operator%
\begin{equation*}
T_{\overline{z}}W=P^{+}\mathcal{B}W^{+}+P^{-}\mathcal{A}W^{-}
\end{equation*}%
acting on bicomplex functions.

\begin{proposition}
\label{Tz_prop}Let $W$ be a bounded measurable $\mathbb{B}$-valued function
defined in some bounded domain $\Omega\subset\mathbb{C}_{j}$.

\begin{description}
\item[$(i)$] If $\left\vert W(z)\right\vert \leq M$ in $\Omega$ then $%
\left\vert T_{\overline{z}}W(z)\right\vert \leq k_{1}M$ for all $z\in\Omega$%
, $k_{1}$ depending only on the area of $\Omega$.

\item[$(ii)$] For every $z_{1}$, $z_{2}\in\mathbb{C}_{j}$,%
\begin{equation*}
\left\vert T_{\overline{z}}W(z_{1})-T_{\overline{z}}W(z_{2})\right\vert \leq
k_{2}M\left\vert z_{1}-z_{2}\right\vert \left\{ 1+\log^{+}\frac{1}{%
\left\vert z_{1}-z_{2}\right\vert }\right\}
\end{equation*}
where $k_{2}$ depends only on the diameter of $\Omega$ and $%
\log^{+}\alpha=\log\alpha$ if $\alpha>1$, and $\log^{+}\alpha=0$ if $%
\alpha\leq1$.

\item[$(iii)$] In every connected component of the complement of $\overline{%
\Omega}$, $T_{\overline{z}}W(z)$ is a bicomplex analytic function.

\item[$(iv)$] If $W$ satisfies the H\"{o}lder condition at a point $%
z_{0}\in\Omega$ then $\partial_{\overline{z}}T_{\overline{z}}W(z_{0})$ and $%
\partial_{z}T_{\overline{z}}W(z_{0})$ exist, and $\partial_{\overline{z}}T_{%
\overline{z}}W(z_{0})=W(z_{0})$.

\item[$(v)$] If $W$ is H\"{o}lder continuous in $\Omega$, then so are $%
\partial_{\overline{z}}T_{\overline{z}}W$ and $\partial_{z}T_{\overline{z}}W$%
.
\end{description}
\end{proposition}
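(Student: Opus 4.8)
The plan is to exploit the idempotent decomposition $W=P^{+}W^{+}+P^{-}W^{-}$, which diagonalizes the operator $T_{\overline{z}}$: by its very definition the $P^{+}$-component of $T_{\overline{z}}W$ is the scalar function $\mathcal{B}W^{+}$ and its $P^{-}$-component is $\mathcal{A}W^{-}$. Since $\mathcal{A}$ is the classical Cauchy--Pompeiu operator (Vekua's $T_{\Omega}$; see \cite{Vekua}, \cite{APFT}) and $\mathcal{B}=\mathcal{CAC}$ is its conjugate companion, all five assertions reduce to the corresponding classical statements about $T_{\Omega}$ applied separately to the two $\mathbb{C}_{i}$-valued components, reassembled through the norm (\ref{bicom_norm}) and the factorization (\ref{CR_op_Fact}). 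Throughout I would use that complex conjugation $\mathcal{C}$ preserves the usual modulus, so every estimate valid for $\mathcal{A}$ holds verbatim for $\mathcal{B}$; that by (\ref{bicom_norm}) each of $\left\vert W^{\pm}\right\vert _{\mathbb{C}_{i}}$ is bounded by $\left\vert W^{+}\right\vert _{\mathbb{C}_{i}}+\left\vert W^{-}\right\vert _{\mathbb{C}_{i}}=2\left\vert W\right\vert $; and that, again by (\ref{bicom_norm}), Hölder continuity of $W$ is equivalent to Hölder continuity of both $W^{+}$ and $W^{-}$.

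For $(i)$ and $(ii)$ I would first note that, by (\ref{bicom_norm}), $\left\vert T_{\overline{z}}W(z)\right\vert =\tfrac{1}{2}(\left\vert \mathcal{B}W^{+}(z)\right\vert _{\mathbb{C}_{i}}+\left\vert \mathcal{A}W^{-}(z)\right\vert _{\mathbb{C}_{i}})$. The classical sup-norm bound for $T_{\Omega}$ gives $\left\vert \mathcal{A}W^{-}(z)\right\vert _{\mathbb{C}_{i}}\leq C_{\Omega}\sup_{\Omega}\left\vert W^{-}\right\vert _{\mathbb{C}_{i}}$ with $C_{\Omega}$ depending only on the area of $\Omega$, and the same for $\mathcal{B}W^{+}$; combining with $\left\vert W^{\pm}\right\vert _{\mathbb{C}_{i}}\leq 2M$ yields $(i)$ with $k_{1}=2C_{\Omega}$. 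For $(ii)$ the classical modulus-of-continuity estimate for $T_{\Omega}$ produces exactly the factor $\{1+\log^{+}(1/\left\vert z_{1}-z_{2}\right\vert )\}$ for each of $\mathcal{A}W^{-}$ and $\mathcal{B}W^{+}$, and taking the half-sum preserves this form, with $k_{2}$ inheriting its dependence on the diameter of $\Omega$ from the scalar estimate.

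Assertion $(iii)$ follows because, for $z$ outside $\overline{\Omega}$, the kernel $1/(z-\zeta)$ is holomorphic in $z$, so $\mathcal{A}W^{-}$ is holomorphic there, whereas $\mathcal{B}W^{+}=\overline{\mathcal{A}\,\overline{W^{+}}}$ is antiholomorphic; by the remark following (\ref{C-Rbic}), a bicomplex function whose $P^{+}$-component is antiholomorphic and whose $P^{-}$-component is holomorphic is precisely a bicomplex analytic function, which gives the claim on each connected component. For $(iv)$ and $(v)$ the key is the factorization (\ref{CR_op_Fact}) together with its companion $\partial_{z}=P^{+}d_{\overline{z}}+P^{-}d_{z}$ (both verified directly from $jP^{\pm}=\mp iP^{\pm}$), which act componentwise, so that $\partial_{\overline{z}}T_{\overline{z}}W=P^{+}d_{z}(\mathcal{B}W^{+})+P^{-}d_{\overline{z}}(\mathcal{A}W^{-})$ and $\partial_{z}T_{\overline{z}}W=P^{+}d_{\overline{z}}(\mathcal{B}W^{+})+P^{-}d_{z}(\mathcal{A}W^{-})$. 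Since $\mathcal{A}$ and $\mathcal{B}$ are right inverses of $d_{\overline{z}}$ and $d_{z}$, the classical theorem on $T_{\Omega}$ gives, at a Hölder point $z_{0}$, the identities $d_{\overline{z}}\mathcal{A}W^{-}(z_{0})=W^{-}(z_{0})$ and $d_{z}\mathcal{B}W^{+}(z_{0})=W^{+}(z_{0})$, whence $\partial_{\overline{z}}T_{\overline{z}}W(z_{0})=P^{+}W^{+}(z_{0})+P^{-}W^{-}(z_{0})=W(z_{0})$; existence of the cross terms $d_{\overline{z}}\mathcal{B}W^{+}$ and $d_{z}\mathcal{A}W^{-}$ at $z_{0}$ is again the classical existence statement for the singular integrals attached to $T_{\Omega}$, which proves $(iv)$. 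For $(v)$, Hölder continuity of $W^{\pm}$ on $\Omega$ makes each of the four scalar functions above Hölder continuous by the classical regularity of $T_{\Omega}$ and of its singular-integral derivatives, and (\ref{bicom_norm}) transfers this back to the bicomplex functions $\partial_{\overline{z}}T_{\overline{z}}W$ and $\partial_{z}T_{\overline{z}}W$.

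There is no deep obstacle here: the role of the bicomplex formalism is precisely that the idempotents $P^{\pm}$ decouple $T_{\overline{z}}$ into two independent copies of the classical Cauchy--Pompeiu operator, so that the whole proposition is essentially a transcription of Vekua's theorem. The only points demanding genuine care are the conjugation symmetry that lets the results for $\mathcal{A}$ serve also for $\mathcal{B}=\mathcal{CAC}$ (so the $P^{+}$-component behaves antiholomorphically exactly where the $P^{-}$-component behaves holomorphically), and the bookkeeping of which complex operator, $d_{z}$ or $d_{\overline{z}}$, acts on which idempotent component under $\partial_{\overline{z}}$ and $\partial_{z}$; getting these pairings right is what makes the right-inverse identities in $(iv)$ return $W$ rather than a spurious expression.
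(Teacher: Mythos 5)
Your proposal is correct and follows essentially the same route as the paper: the authors' proof is a one-line citation of the well-known properties of the operator $\mathcal{A}$ from \cite[p.~7]{Bers Book}, applied through the idempotent decomposition $T_{\overline{z}}W=P^{+}\mathcal{B}W^{+}+P^{-}\mathcal{A}W^{-}$, which is exactly the reduction you carry out in detail. Your version merely makes explicit what the citation leaves implicit --- the norm bookkeeping via (\ref{bicom_norm}), the conjugation symmetry $\mathcal{B}=\mathcal{CAC}$, and the pairing of $d_{z}$, $d_{\overline{z}}$ with $P^{\pm}$ under (\ref{CR_op_Fact}) --- all of which is accurate.
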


\begin{proof}
The proof follows directly from the well known properties of the operator $%
\mathcal{A}$ \cite[pag 7]{Bers Book}.
\end{proof}

\bigskip

With the help of the last proposition the following useful results
concerning bicomplex pseudoanalytic functions are obtained.

\begin{proposition}
\label{prop_int_eq}Let $W$ be a bounded measurable $\mathbb{B}$-valued
function in some domain $\Omega$. Set%
\begin{equation*}
h=W-T_{\overline{z}}(aW+b\overline{W}).
\end{equation*}
Then, $W$ is $\mathbb{B}$-pseudoanalytic iff $h$ is $\mathbb{B}$-analytic.
\end{proposition}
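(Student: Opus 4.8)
The plan is to read off the claim from the fundamental right-inverse property of $T_{\overline{z}}$ established in Proposition \ref{Tz_prop}$(iv)$, namely that $\partial_{\overline{z}}T_{\overline{z}}g=g$ whenever $g$ is H\"{o}lder continuous. Recall that, by the Remark treating the case $F\equiv 1$, $G\equiv j$, a function $h$ is $\mathbb{B}$-analytic precisely when $\partial_{\overline{z}}h=0$, and that such $h$ is automatically smooth since its idempotent components $h^{+}$, $h^{-}$ are anti-holomorphic and holomorphic respectively. Writing $g:=aW+b\overline{W}$, the whole argument reduces to applying $\partial_{\overline{z}}$ to the defining relation $h=W-T_{\overline{z}}g$ and to securing the regularity needed to make that application legitimate. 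I note that, since the generating pair has H\"{o}lder continuous partial derivatives, the characteristic coefficients $a$ and $b$ are themselves H\"{o}lder continuous, a fact that will be used repeatedly.

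For the forward implication, I would start from the hypothesis that $W$ is $\mathbb{B}$-pseudoanalytic, so that $\overset{\circ}{W}$ exists everywhere and $W$ satisfies the Vekua equation $\partial_{\overline{z}}W=g$. As a solution of a Vekua equation with H\"{o}lder continuous coefficients, $W$ is H\"{o}lder continuous by the standard Bers--Vekua regularity, hence $g=aW+b\overline{W}$ is H\"{o}lder continuous as well. Applying $\partial_{\overline{z}}$ to $h=W-T_{\overline{z}}g$ and invoking Proposition \ref{Tz_prop}$(iv)$ then gives
\begin{equation*}
\partial_{\overline{z}}h=\partial_{\overline{z}}W-\partial_{\overline{z}}T_{\overline{z}}g=g-g=0,
\end{equation*}
so that $h$ is $\mathbb{B}$-analytic.

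For the converse, the delicate point is regularity, and this is where I expect the main work to lie. Assuming $h$ is $\mathbb{B}$-analytic (in particular smooth) and only that $W$ is bounded measurable, I would bootstrap as follows. Since $W$ is bounded and $a$, $b$ are bounded, $g$ is bounded, so by Proposition \ref{Tz_prop}$(ii)$ the function $T_{\overline{z}}g$ is H\"{o}lder continuous; consequently $W=h+T_{\overline{z}}g$ is H\"{o}lder continuous. Now that $W$ is H\"{o}lder continuous, so is $g=aW+b\overline{W}$, and Proposition \ref{Tz_prop}$(iv)$--$(v)$ guarantee that $\partial_{\overline{z}}T_{\overline{z}}g=g$ and that both $\partial_{\overline{z}}T_{\overline{z}}g$ and $\partial_{z}T_{\overline{z}}g$ exist and are continuous. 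Differentiating $W=h+T_{\overline{z}}g$ then yields $\partial_{\overline{z}}W=\partial_{\overline{z}}h+g=g$, i.e.\ the Vekua equation holds, while both $\partial_{\overline{z}}W$ and $\partial_{z}W$ exist and are continuous. By the criterion recorded after the definition of the characteristic coefficients---existence and continuity of $\partial_{\overline{z}}W$, $\partial_{z}W$ together with the Vekua equation force $\overset{\circ}{W}$ to exist---I conclude that $W$ is $\mathbb{B}$-pseudoanalytic.

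The main obstacle is thus the two-step regularity bootstrap in the converse direction: upgrading $W$ from merely bounded measurable to H\"{o}lder continuous (first step, using only boundedness of $g$ and the H\"{o}lder-continuity output of $T_{\overline{z}}$ in part $(ii)$), and only then upgrading $g$ to a H\"{o}lder continuous function so that the pointwise differentiation identities of parts $(iv)$--$(v)$ become applicable. Everything else is a direct consequence of $T_{\overline{z}}$ being a right inverse of $\partial_{\overline{z}}$ on H\"{o}lder continuous functions.
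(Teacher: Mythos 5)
Your proof of the converse implication (if $h$ is $\mathbb{B}$-analytic then $W$ is $\mathbb{B}$-pseudoanalytic) is essentially the paper's own argument for that direction: the same two-step bootstrap, first upgrading $W=h+T_{\overline{z}}g$ from bounded measurable to H\"older continuous via Proposition \ref{Tz_prop}$(ii)$, then upgrading $g=aW+b\overline{W}$ to H\"older continuous so that Proposition \ref{Tz_prop}$(iv)$--$(v)$ legitimize the differentiation; you even make explicit the final appeal to the criterion that continuity of $\partial_{z}W$ and $\partial_{\overline{z}}W$ together with the Vekua equation forces $\overset{\circ}{W}$ to exist, which the paper leaves implicit. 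That half of your proposal is correct.

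The forward implication is where there is a genuine gap. The paper does not prove this direction in detail (it defers to the techniques of \cite[p.~8]{Bers Book}), and your substitute for it hinges on the claim that, as a solution of a Vekua equation with H\"older continuous coefficients, $W$ is H\"older continuous ``by the standard Bers--Vekua regularity''. No such theorem is at your disposal here. First, within the paper's own logical order, H\"older regularity of $\mathbb{B}$-pseudoanalytic functions is the proposition immediately \emph{following} Proposition \ref{prop_int_eq} and is deduced \emph{from} it, so invoking it to prove Proposition \ref{prop_int_eq} is circular. Second, the classical complex regularity theorem rests on the similarity principle or on exactly the kind of integral representation being established here, and the paper stresses that the similarity principle fails for bicomplex functions because of zero divisors (Theorem \ref{Similarity. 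P} needs $W^{-1}\overline{W}$ bounded). Nor can you reduce to the complex theory through the idempotent splitting: since $j$-conjugation swaps the components, $\overline{W}=P^{+}W^{-}+P^{-}W^{+}$, so equation (\ref{Vekua_equation (F,G)}) becomes the coupled system $d_{z}W^{+}=a^{+}W^{+}+b^{+}W^{-}$, $d_{\overline{z}}W^{-}=a^{-}W^{-}+b^{-}W^{+}$, which is not a pair of independent complex Vekua equations. Finally, the bootstrap that rescues the converse cannot be replayed in this direction: pseudoanalyticity of $W$ only yields continuity of $W$, hence of $g$, and mere continuity of $g$ is not enough for Proposition \ref{Tz_prop}$(iv)$, so the identity $\partial_{\overline{z}}T_{\overline{z}}g=g$ on which your computation of $\partial_{\overline{z}}h=0$ rests is not justified. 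Supplying this direction---either by an independent proof that $\mathbb{B}$-pseudoanalytic functions are H\"older continuous, or by a Morera/Weyl-type argument able to handle a merely continuous density $g$---is precisely the content of the Bers techniques the paper points to, and it is missing from your proposal.
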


\begin{proof}
Suppose that $h$ is $\mathbb{B}$-analytic. Then as $W$ is bounded, by
Proposition \ref{Tz_prop} the function $T_{\overline{z}}(aW+b\overline{W})$
is H\"{o}lder continuous. As $h$ is H\"{o}lder continuous (being $\mathbb{B}$%
-analytic) the function $W$ is H\"{o}lder continuous as well and by
Proposition \ref{Tz_prop} the function $T_{\overline{z}}(aW+b\overline{W})$
is continuously differentiable. As $\partial _{\overline{z}}h=0$,
application of $\partial _{\overline{z}}$ to $W$ gives 
\begin{equation*}
\partial _{\overline{z}}W=\partial _{\overline{z}}h+\partial _{\overline{z}%
}T_{\overline{z}}(aW+b\overline{W})=aW+b\overline{W}.
\end{equation*}%
The other direction of this statement can be proved using similar techniques
as in \cite[p. 8]{Bers Book}.
\end{proof}

\begin{proposition}
A $\mathbb{B}$-pseudoanalytic function in $\Omega$ has H\"{o}lder continuous
partial derivatives in every compact subdomain of $\Omega$.
\end{proposition}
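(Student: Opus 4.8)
The plan is to run a regularity bootstrap built on the integral equation of Proposition \ref{prop_int_eq} together with the mapping properties of $T_{\overline{z}}$ from Proposition \ref{Tz_prop}. Since the statement concerns compact subdomains, I would first fix a compact $K\subset\Omega$ and choose a bounded subdomain $\Omega'$ with $K\subset\Omega'$ and $\overline{\Omega'}\subset\Omega$, on which all the operators above are well defined; every conclusion will be drawn on $\Omega'$ and then restricted to $K$. Because $W$ is $\mathbb{B}$-pseudoanalytic, the $(F,G)$-derivative exists throughout $\Omega$, so $W$ is in particular continuous and therefore bounded on the compact $\overline{\Omega'}$; this supplies the boundedness hypothesis needed to invoke Proposition \ref{Tz_prop}.

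Set $h=W-T_{\overline{z}}(aW+b\overline{W})$ as in Proposition \ref{prop_int_eq}. By that proposition $h$ is $\mathbb{B}$-analytic, i.e. $\partial_{\overline{z}}h=0$; in the idempotent variables this means $h^{+}$ is antiholomorphic and $h^{-}$ is holomorphic (cf. the remark following (\ref{C-Rbic})), so $h$ is $C^{\infty}$ and in particular has H\"{o}lder continuous partial derivatives of every order. The whole argument thus reduces to controlling the term $T_{\overline{z}}(aW+b\overline{W})$.

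I would carry out the bootstrap in two steps. First, with $W$ merely bounded, parts $(i)$--$(ii)$ of Proposition \ref{Tz_prop} give that $T_{\overline{z}}(aW+b\overline{W})$ is H\"{o}lder continuous (the log-Lipschitz modulus in $(ii)$ yields any exponent $<1$); adding the smooth function $h$ shows that $W$ itself is H\"{o}lder continuous on $\Omega'$. Second, I use that the characteristic coefficients $a,b$ are H\"{o}lder continuous: they are quotients whose numerators are built from $F,G$ and $\partial_{\overline{z}}F,\partial_{\overline{z}}G$ --- all H\"{o}lder by the generating-pair hypothesis --- and whose common denominator $F\overline{G}-\overline{F}G=-2j\func{Vec}(\overline{F}G)$ is everywhere invertible, so inversion in the open set $\mathcal{R}(\mathbb{B})$ (Proposition \ref{R(B)_open_prop}) preserves H\"{o}lder continuity. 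Since sums and products of H\"{o}lder functions are again H\"{o}lder (the constant $2$ in $\left\vert WV\right\vert \le 2\left\vert W\right\vert \left\vert V\right\vert$ is harmless) and conjugation is an isometry, $aW+b\overline{W}$ is H\"{o}lder continuous once $W$ is. Now Proposition \ref{Tz_prop}$(v)$ applies and yields that $\partial_{\overline{z}}T_{\overline{z}}(aW+b\overline{W})$ and $\partial_{z}T_{\overline{z}}(aW+b\overline{W})$ are H\"{o}lder continuous.

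Finally I would assemble the derivatives of $W$. Differentiating $W=h+T_{\overline{z}}(aW+b\overline{W})$ gives $\partial_{\overline{z}}W$ and $\partial_{z}W$ as sums of the (smooth) derivatives of $h$ and the H\"{o}lder continuous derivatives of the $T_{\overline{z}}$-term, hence both are H\"{o}lder continuous on $\Omega'$, and a fortiori on $K$. Since $\partial_{x}=\partial_{z}+\partial_{\overline{z}}$ and $\partial_{y}=j(\partial_{z}-\partial_{\overline{z}})$ by (\ref{CR_Op_Bicomp}), the real partial derivatives $\partial_{x}W$ and $\partial_{y}W$ are $\mathbb{B}$-linear combinations with constant coefficients of H\"{o}lder continuous functions and are therefore H\"{o}lder continuous, which is the assertion. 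The main delicate point is the very start of the bootstrap: one must ensure that pseudoanalyticity already forces enough regularity (continuity from differentiability, boundedness from compactness of $\overline{\Omega'}$) to legitimately enter the integral-operator estimates of Proposition \ref{Tz_prop}; after that, the improvement from \emph{bounded} to \emph{H\"{o}lder} to \emph{H\"{o}lder derivatives} is automatic.
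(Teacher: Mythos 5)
Your proposal is correct and follows exactly the route the paper takes: its proof consists precisely of citing Proposition \ref{prop_int_eq} (the decomposition $W=h+T_{\overline{z}}(aW+b\overline{W})$ with $h$ being $\mathbb{B}$-analytic) together with the mapping properties of $T_{\overline{z}}$ from Proposition \ref{Tz_prop}. Your write-up simply fills in the bootstrap (bounded $\Rightarrow$ H\"{o}lder $\Rightarrow$ H\"{o}lder derivatives) that the authors leave implicit, including the correct localization to a bounded subdomain and the H\"{o}lder continuity of the characteristic coefficients.
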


\begin{proof}
The proof follows from the above proposition and from the properties of the
operator $T_{\overline{z}}$ established in Proposition \ref{Tz_prop}.
\end{proof}

\begin{theorem}
\label{theorem_pseudo_conv}The limit of a uniformly convergent sequence of $%
\mathbb{B}$-pseudoanalytic functions is $\mathbb{B}$-pseudoanalytic.
\end{theorem}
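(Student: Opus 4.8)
The plan is to reduce the statement to a Weierstrass-type convergence theorem for analytic functions by means of the integral-equation characterization in Proposition \ref{prop_int_eq}. Since the existence of the $(F,G)$-derivative is a local property, it suffices to establish the conclusion in a neighborhood of each point of $\Omega$. I therefore fix a bounded subdomain $\Omega'$ with $\overline{\Omega'}\subset\Omega$, on which the characteristic coefficients are bounded, say $\left\vert a\right\vert,\left\vert b\right\vert\leq C$, and work there. Let $\{W_n\}$ be the given sequence, converging uniformly to $W$ on $\Omega'$. Being the uniform limit of continuous functions, $W$ is continuous and hence bounded and measurable on the relatively compact $\Omega'$, so Proposition \ref{prop_int_eq} applies to it. Each $W_n$ is $\mathbb{B}$-pseudoanalytic, so by Proposition \ref{prop_int_eq} the function
\[
h_n:=W_n-T_{\overline{z}}(aW_n+b\overline{W_n})
\]
is $\mathbb{B}$-analytic. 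I introduce the analogous $h:=W-T_{\overline{z}}(aW+b\overline{W})$ and aim to prove that $h$ is $\mathbb{B}$-analytic; Proposition \ref{prop_int_eq} read in the opposite direction will then give that $W$ is $\mathbb{B}$-pseudoanalytic.

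The first real step is to show that $h_n\to h$ uniformly on $\Omega'$. Since $T_{\overline{z}}$ is additive and the conjugation is additive, one has
\[
h_n-h=(W_n-W)-T_{\overline{z}}\left( a(W_n-W)+b\,\overline{W_n-W}\right).
\]
Using the submultiplicative estimate of Proposition \ref{Prop_norm}$(iii)$ together with the identity $\left\vert\overline{V}\right\vert=\left\vert V\right\vert$ (which follows from \eqref{W+-}, as the conjugation with respect to $j$ merely interchanges the idempotent components $V^{+}$ and $V^{-}$), the argument of $T_{\overline{z}}$ is bounded pointwise by $2(\left\vert a\right\vert+\left\vert b\right\vert)\left\vert W_n-W\right\vert\leq 4C\left\vert W_n-W\right\vert$. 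Proposition \ref{Tz_prop}$(i)$ then yields $\sup_{\Omega'}\left\vert T_{\overline{z}}(\,\cdot\,)\right\vert\leq 4Ck_1\sup_{\Omega'}\left\vert W_n-W\right\vert\to 0$, and since $W_n\to W$ uniformly as well, $h_n\to h$ uniformly on $\Omega'$.

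It remains to pass $\mathbb{B}$-analyticity to the limit. By the factorization \eqref{CR_op_Fact} and the first Remark, $\mathbb{B}$-analyticity of $h_n$ means precisely that the scalar components $h_n^{+}$ are antiholomorphic and $h_n^{-}$ are holomorphic. Because $\left\vert V^{\pm}\right\vert\leq 2\left\vert V\right\vert$, the uniform convergence $h_n\to h$ forces $h_n^{\pm}\to h^{\pm}$ uniformly, and by the classical Weierstrass theorem the uniform limit of (anti)holomorphic functions is again (anti)holomorphic. Hence $h^{+}$ is antiholomorphic and $h^{-}$ is holomorphic, so $h$ is $\mathbb{B}$-analytic. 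Proposition \ref{prop_int_eq} then gives that $W$ is $\mathbb{B}$-pseudoanalytic on $\Omega'$, and by locality on all of $\Omega$. I expect the only delicate point to be the bookkeeping in the uniform estimate for $h_n-h$ — in particular verifying that the conjugation term behaves additively and that $\left\vert\overline{V}\right\vert=\left\vert V\right\vert$ — while the final passage to the limit is the standard Weierstrass argument carried out componentwise through the idempotents.
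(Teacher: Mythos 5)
Your proof is correct and takes essentially the same route as the paper: both arguments use Proposition \ref{prop_int_eq} to replace each $W_n$ by the $\mathbb{B}$-analytic function $h_n=W_n-T_{\overline{z}}(aW_n+b\overline{W_n})$, show that the uniform limit $h=W-T_{\overline{z}}(aW+b\overline{W})$ is again $\mathbb{B}$-analytic, and then invoke Proposition \ref{prop_int_eq} in the reverse direction. The paper states the middle step without justification, whereas you supply the two ingredients it tacitly uses — the uniform convergence $h_n\to h$ via the bounds in Propositions \ref{Prop_norm} and \ref{Tz_prop}, and the Weierstrass theorem applied componentwise through the idempotents — so your write-up is a more detailed rendering of the same argument.
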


\begin{proof}
Let $W_{n}(z)$ be a sequence of bounded $\mathbb{B}$-pseudoanalytic
functions in a bounded domain $\Omega $ such that $W_{n}(z)\rightarrow W(z)$
uniformly in $\Omega $. It follows from Proposition \ref{prop_int_eq} that
the functions $h_{n}$ defined by%
\begin{equation*}
h_{n}=W_{n}-T_{\overline{z}}(aW_{n}+b\overline{W_{n}})
\end{equation*}%
are $\mathbb{B}$-analytic in $\Omega $. Then the uniform limit $h=W-T_{%
\overline{z}}(aW+b\overline{W})$ is also a $\mathbb{B}$-analytic function in 
$\Omega $. The last equality together with Proposition \ref{prop_int_eq}
allows one to conclude that $W$ is $\mathbb{B}$-pseudoanalytic in $\Omega $.
\end{proof}

\begin{theorem}
\label{Similarity. P}Let $W$ be a $\mathbb{B}$-pseudoanalytic function in $%
\Omega$ except perhaps a finite number of points $\left\{
z_{1},..,z_{p}\right\} $ where $W$ is allowed to be unbounded. Then if $%
W^{-1}\overline{W}$ is a bounded measurable function in $\Omega$, there
exists a $\mathbb{B}$-analytic function $\Psi$ in $\Omega\backslash\left\{
z_{1},..,z_{p}\right\} $ such that%
\begin{equation*}
W=\Psi E\left[ S\right] ,\quad\text{in }\Omega\text{,}
\end{equation*}
where $S=T_{\overline{z}}(a+\frac{\overline{W}}{W}b)$.
\end{theorem}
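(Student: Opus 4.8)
The plan is to imitate Bers' classical construction: produce the scalar-type multiplier $E[S]$ explicitly, show that the function $\Psi:=W\,E[-S]$ is $\mathbb{B}$-analytic, and then read off $W=\Psi E[S]$ from the identity $E[S]E[-S]=1$ (Proposition \ref{Ez_prop}$(i)$). First I would set $g:=a+\dfrac{\overline{W}}{W}\,b=a+(W^{-1}\overline{W})\,b$, which by hypothesis is a bounded measurable $\mathbb{B}$-valued function on $\Omega$; here the standing assumption that $W$ avoids the zero divisors is exactly what makes $W^{-1}$, and hence $g$, meaningful. Put $S:=T_{\overline{z}}g$. By Proposition \ref{Tz_prop}$(i)$--$(ii)$ the function $S$ is bounded and log-H\"older continuous on $\Omega$, so $E[-S]=P^{+}e^{-S^{+}}+P^{-}e^{-S^{-}}$ is a well-defined continuous invertible function whose inverse is $E[S]$.

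The heart of the argument is the computation $\partial_{\overline{z}}\Psi=0$. Using the factorization $\partial_{\overline{z}}=P^{+}d_{z}+P^{-}d_{\overline{z}}$ from \eqref{CR_op_Fact}, the orthogonality $P^{\pm}P^{\mp}=0$, and the fact that $\mathcal{B}$ and $\mathcal{A}$ are right inverses of $d_{z}$ and $d_{\overline{z}}$, I would first establish
\[
\partial_{\overline{z}}S=P^{+}d_{z}\mathcal{B}g^{+}+P^{-}d_{\overline{z}}\mathcal{A}g^{-}=P^{+}g^{+}+P^{-}g^{-}=g .
\]
Differentiating each idempotent component of the exponential then gives $\partial_{\overline{z}}E[-S]=-g\,E[-S]$. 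Since $\mathbb{B}$ is commutative, the product rule together with the Vekua equation \eqref{Vekua_equation (F,G)} yields
\[
\partial_{\overline{z}}\Psi=(\partial_{\overline{z}}W)\,E[-S]+W\,\partial_{\overline{z}}E[-S]=\bigl(aW+b\overline{W}-Wg\bigr)E[-S],
\]
and $Wg=aW+(WW^{-1}\overline{W})\,b=aW+b\overline{W}$, so the bracket vanishes identically. Hence $\Psi$ is $\mathbb{B}$-analytic in $\Omega\setminus\{z_{1},\dots,z_{p}\}$, and multiplying by $E[S]$ gives $W=\Psi E[S]$ throughout $\Omega$.

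I expect the main obstacle to be the rigor of the two differentiation steps, because $g$ is only bounded measurable and $S=T_{\overline{z}}g$ need not be classically differentiable: Proposition \ref{Tz_prop}$(iv)$ guarantees $\partial_{\overline{z}}T_{\overline{z}}g=g$ only at the H\"older points of $g$. I would therefore read $\partial_{\overline{z}}S=g$ and $\partial_{\overline{z}}E[-S]=-g\,E[-S]$ as identities for generalized (distributional) derivatives. The Cauchy transform of a bounded function on a bounded domain lies in $W^{1,p}_{\mathrm{loc}}$ for every finite $p$, so each component $S^{\pm}$ is in $W^{1,p}_{\mathrm{loc}}$, the chain rule for the smooth map $\exp$ is legitimate, and---because $W$ is genuinely $C^{1}$ with H\"older-continuous partials away from the $z_{i}$, being pseudoanalytic there---the product rule applies to $\Psi=W\,E[-S]$ in the generalized sense (multiplication of a distribution by a $C^{1}$ function). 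Finally $\Psi$ is continuous and satisfies $\partial_{\overline{z}}\Psi=0$ as a distribution, so by Weyl's lemma applied to its holomorphic and antiholomorphic idempotent components (cf.\ the remark following \eqref{C-Rbic}) it is a genuine $\mathbb{B}$-analytic function, completing the proof along the lines of \cite[p.~8]{Bers Book}.
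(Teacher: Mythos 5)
Your proposal is correct and follows essentially the same route as the paper: the paper's proof simply applies $\partial_{\overline{z}}$ to $\Psi = W E\left[-S\right]$, uses the product rule, the Vekua equation, and $\partial_{\overline{z}}E\left[-S\right]=-\left(a+\frac{\overline{W}}{W}b\right)E\left[-S\right]$ to get $\partial_{\overline{z}}\Psi=0$, which is exactly your core computation. The only difference is that you supply the regularity justification (distributional reading of $\partial_{\overline{z}}S=g$, Sobolev regularity of the Cauchy transform, and Weyl's lemma on the idempotent components) that the paper's one-line proof passes over in silence.
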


\begin{proof}
Application of $\partial _{\overline{z}}$ to $\Psi =WE\left[ -S\right] $
gives%
\begin{align*}
\partial _{\overline{z}}\Psi & =\left( \partial _{\overline{z}}W\right) E%
\left[ -S\right] +W\partial _{\overline{z}}E\left[ -S\right] \\
& =\left( aW+b\overline{W}\right) E\left[ -S\right] +W\left( -a-\frac{%
\overline{W}}{W}b\right) E\left[ -S\right] =0\text{.}
\end{align*}%
Then $\Psi $ is $\mathbb{B}$-analytic$.$
\end{proof}

\subsection{Vekua's equation for (F,G)-derivatives and the antiderivative}

\begin{definition}
\label{DefSuccessor_bi}Let $(F,G)$ and $(F_{1},G_{1})$ be two generating
pairs in $\Omega$. $(F_{1},G_{1})$ is called a\ successor of $(F,G)$ and $%
(F,G)$ is called a predecessor of $(F_{1},G_{1})$ if%
\begin{equation*}
a_{(F_{1},G_{1})}=a_{(F,G)}\qquad\text{and}\qquad
b_{(F_{1},G_{1})}=-B_{(F,G)}\text{.}
\end{equation*}
\end{definition}

By analogy with the complex case \cite{Bers Book} (see also \cite{APFT}) we
have the following statements

\begin{theorem}
\label{ThBersDer_bi}Let $W$ be a bicomplex $(F,G)$-pseudoanalytic function
and let $(F_{1},G_{1})$ be a successor of $(F,G)$. Then $\overset{\circ}{W}$
is an $(F_{1},G_{1})$-pseudoanalytic function.
\end{theorem}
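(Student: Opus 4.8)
The plan is to convert the statement into a single Vekua identity and then verify it by direct computation. By the equivalence recorded in the excerpt just after (\ref{derivative_with_characteristic}) (if $\partial_{\overline{z}}$ and $\partial_{z}$ of a function exist and are continuous near a point and the relevant Vekua equation holds there, then the corresponding derivative exists), proving that $\overset{\circ}{W}$ is $(F_{1},G_{1})$-pseudoanalytic reduces to showing that $\overset{\circ}{W}$ solves the Vekua equation for the characteristic coefficients of $(F_{1},G_{1})$. Since by Definition \ref{DefSuccessor_bi} these are $a_{(F_{1},G_{1})}=a$ and $b_{(F_{1},G_{1})}=-B$, the goal becomes the identity
\[
\partial_{\overline{z}}\overset{\circ}{W}=a\,\overset{\circ}{W}-B\,\overline{\overset{\circ}{W}}.
\]

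First I would apply $\partial_{\overline{z}}$ to the representation $\overset{\circ}{W}=\partial_{z}W-AW-B\overline{W}$ of (\ref{derivative_with_characteristic}). The calculation rests on four facts: the commutativity $\partial_{z}\partial_{\overline{z}}=\partial_{\overline{z}}\partial_{z}=\tfrac{1}{4}\Delta$, immediate from (\ref{CR_Op_Bicomp}) and $j^{2}=-1$; the Vekua equation (\ref{Vekua_equation (F,G)}) for $W$, used both to replace $\partial_{\overline{z}}W$ and, after commuting, to expand $\partial_{z}\partial_{\overline{z}}W=\partial_{z}(aW+b\overline{W})$; the conjugation rules $\partial_{z}\overline{W}=\overline{\partial_{\overline{z}}W}$ and $\partial_{\overline{z}}\overline{W}=\overline{\partial_{z}W}$, which follow from (\ref{CR_Op_Bicomp}) and $\overline{j}=-j$; and the commutativity of $\mathbb{B}$, so that cross terms like $aA$ and $Aa$ cancel. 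Collecting terms, the summands $a\,\overset{\circ}{W}$ and $-B\,\overline{\overset{\circ}{W}}$ appear exactly, and the residue is a combination $X\,W+Y\,\overline{W}$ with
\[
X=\partial_{z}a-\partial_{\overline{z}}A+b\overline{b}-B\overline{B},\qquad Y=\partial_{z}b-\partial_{\overline{z}}B+aB-Ab+b\overline{a}-B\overline{A}.
\]
Thus the theorem follows once I show $X=Y=0$.

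The crux is therefore the two compatibility identities $X=Y=0$ for the characteristic coefficients, and I would obtain them from the relations $\partial_{\overline{z}}F=aF+b\overline{F}$, $\partial_{z}F=AF+B\overline{F}$ (and the analogous pair for $G$), which hold by the very definition of $a,b,A,B$. Applying $\partial_{z}$ to the first and $\partial_{\overline{z}}$ to the second, using $\partial_{z}\partial_{\overline{z}}F=\partial_{\overline{z}}\partial_{z}F$ together with the same conjugation and commutativity rules, collapses everything into $X\,F+Y\,\overline{F}=0$; the $G$-pair gives $X\,G+Y\,\overline{G}=0$. This is a linear system for $(X,Y)$ with determinant $F\overline{G}-\overline{F}G=-2j\,\func{Vec}(\overline{F}G)$. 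By the generating-pair condition (\ref{condGenPair}), $\func{Vec}(\overline{F}G)\neq0$, and since $\func{Vec}(\overline{F}G)\in\mathbb{C}_{i}$ this element is invertible in $\mathbb{B}$ (its product with its $j$-conjugate is $4(\func{Vec}(\overline{F}G))^{2}\neq0$). Hence $X=Y=0$, closing the argument.

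The step I expect to be the main obstacle is regularity: both the expansion of $\overset{\circ}{W}$ and the derivation of the identities require second-order derivatives of $W$ and of $F,G$ and the interchange of mixed partials, whereas the generating pair is only assumed to have H\"{o}lder continuous first derivatives. I would handle this by invoking the regularity already established in this theory (a $\mathbb{B}$-pseudoanalytic function has H\"{o}lder continuous partial derivatives on every compact subdomain) and, where differentiating the coefficients is unavoidable, by working with generating pairs smooth enough for the characteristic coefficients to be differentiable, exactly as in the complex case of Bers. A secondary, genuinely bicomplex point, already settled above, is that the ``Jacobian'' $F\overline{G}-\overline{F}G$ must be truly invertible and not merely nonzero, since a priori it could be a zero divisor; the generating-pair condition supplies this stronger property.
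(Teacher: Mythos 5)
Your computation is algebraically correct: the residual coefficients $X$ and $Y$ are the right ones, the compatibility system $XF+Y\overline{F}=XG+Y\overline{G}=0$ does follow from equality of mixed partials of $F$ and $G$, and your observation that the determinant $F\overline{G}-\overline{F}G=-2j\operatorname{Vec}(\overline{F}G)$ is genuinely \emph{invertible} in $\mathbb{B}$, not merely nonzero, is exactly the bicomplex ingredient needed to conclude $X=Y=0$. Nevertheless the proof has a genuine gap, which you name yourself but do not close: under the stated hypotheses none of the quantities you differentiate need be differentiable. A generating pair is only assumed to have H\"{o}lder continuous first partial derivatives, so $a,b,A,B$ are merely H\"{o}lder continuous --- possibly nowhere differentiable --- and $\partial_{z}a$, $\partial_{z}b$, $\partial_{\overline{z}}A$, $\partial_{\overline{z}}B$ need not exist at any point; likewise, with H\"{o}lder coefficients the solutions of (\ref{Vekua_equation (F,G)}) are $C^{1,\alpha}$ but in general not $C^{2}$, so $\partial_{z}\partial_{\overline{z}}W$ and the interchange of mixed partials (also needed for $F$ and $G$) are unavailable. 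Even your preliminary reduction --- applying the criterion following (\ref{derivative_with_characteristic}) to $\overset{\circ}{W}=\partial_{z}W-AW-B\overline{W}$ --- already presupposes that $\partial_{z}\overset{\circ}{W}$ and $\partial_{\overline{z}}\overset{\circ}{W}$ exist and are continuous, i.e.\ the same missing regularity. Your two remedies do not repair this: the paper's regularity proposition gives H\"{o}lder continuity of \emph{first} partials of pseudoanalytic functions only, and restricting to ``generating pairs smooth enough'' proves a strictly weaker theorem than the one stated.

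Note also that the paper gives no proof of this theorem; it appeals to Bers' complex-case argument, and that argument is not your computation --- indeed ``exactly as in the complex case of Bers'' is a mischaracterization, since Bers' integration theory exists precisely to avoid differentiating the characteristic coefficients. The route that works at the stated regularity, and which is available inside the paper, is integral rather than differential: write $W=\phi F+\psi G$ with scalar $\phi ,\psi $; then $\overset{\circ}{W}=\phi _{z}F+\psi _{z}G$, and the identities $\operatorname{Sc}(G^{\ast }F)=\operatorname{Sc}(F^{\ast }G)=1$, $\operatorname{Sc}(G^{\ast }G)=\operatorname{Sc}(F^{\ast }F)=0$ together with the pseudoanalyticity condition $\phi _{\overline{z}}F+\psi _{\overline{z}}G=0$ give the exactness relations $\operatorname{Sc}\bigl(G^{\ast }\overset{\circ}{W}\,dz\bigr)=d\phi $ and $\operatorname{Sc}\bigl(F^{\ast }\overset{\circ}{W}\,dz\bigr)=d\psi $. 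Hence both $\ast$-integrals of $\overset{\circ}{W}$ over closed curves vanish, and the Morera-type characterization of Proposition \ref{propMorera}$(i)$ (with $(F,G)$ a predecessor of $(F_{1},G_{1})$) yields that $\overset{\circ}{W}$ is $(F_{1},G_{1})$-pseudoanalytic; only first derivatives occur anywhere in this chain. Your calculation retains value as an explanation of why the successor coefficients $a_{(F_{1},G_{1})}=a_{(F,G)}$, $b_{(F_{1},G_{1})}=-B_{(F,G)}$ are forced, and as a complete proof under additional smoothness of $F$, $G$, but as a proof of Theorem \ref{ThBersDer_bi} under its stated hypotheses it does not go through.
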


\begin{definition}
\label{DefSeq_bi}A sequence of generating pairs $\left\{
(F_{m},G_{m})\right\} $, $m=0,\pm1,\pm2,\ldots$ is called a generating
sequence if $(F_{m+1},G_{m+1})$ is a successor of $(F_{m},G_{m})$. If $%
(F_{0},G_{0})=(F,G)$, we say that $(F,G)$ is embedded in $\left\{
(F_{m},G_{m})\right\} $.
\end{definition}

Let $W$ be a bicomplex $(F,G)$-pseudoanalytic function. Using a generating
sequence in which $(F,G)$ is embedded we can define the higher derivatives
of $W$ by the recursion formula%
\begin{equation*}
W^{[0]}=W;\qquad W^{[m+1]}=\frac{d_{(F_{m},G_{m})}W^{[m]}}{dz},\quad
m=0,1,\ldots\text{.}
\end{equation*}

\begin{definition}
\label{DefAdjoint_bi}Let $(F,G)$ be a generating pair. Its adjoint
generating pair $(F,G)^{\ast}=(F^{\ast},G^{\ast})$ is defined by the formulas%
\begin{equation*}
F^{\ast}=-\frac{2\overline{F}}{F\overline{G}-\overline{F}G},\qquad G^{\ast }=%
\frac{2\overline{G}}{F\overline{G}-\overline{F}G}.
\end{equation*}
\end{definition}

\begin{proposition}
\label{cara_coef_(F,G)*}$(F,G)^{\ast\ast}=(F,G)$ and%
\begin{equation*}
\begin{array}{cc}
a_{(F^{\ast},G^{\ast})}=-a_{(F,G)}\quad\bigskip & b_{(F^{\ast},G^{\ast})}=-%
\overline{B_{(F,G)}} \\ 
A_{(F^{\ast},G^{\ast})}=-A_{(F,G)} & B_{(F^{\ast},G^{\ast})}=-\overline {%
b_{(F,G)}}%
\end{array}%
\end{equation*}
\end{proposition}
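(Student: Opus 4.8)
The plan is to carry out everything in terms of the single quantity $\Delta:=F\overline{G}-\overline{F}G$, which by the remark following the characteristic coefficients equals $-2j\operatorname{Vec}(\overline{F}G)$; hence $\Delta$ is nowhere zero and purely vectorial, so that $\overline{\Delta}=-\Delta$. From $\overline{\Delta}=-\Delta$ one reads off $\overline{F^{*}}=2F/\Delta$ and $\overline{G^{*}}=-2G/\Delta$, and a short computation gives the denominator of the adjoint pair, $F^{*}\overline{G^{*}}-\overline{F^{*}}G^{*}=-4/\Delta$. In particular this is nonzero, so $(F^{*},G^{*})$ is again a generating pair and its characteristic coefficients are well defined and are \emph{uniquely} determined by the structural relations $\partial_{\overline{z}}F^{*}=a^{*}F^{*}+b^{*}\overline{F^{*}}$, $\partial_{\overline{z}}G^{*}=a^{*}G^{*}+b^{*}\overline{G^{*}}$ and their $\partial_{z}$-analogues. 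Substituting the expressions above into the defining formulas for $F^{**},G^{**}$ then collapses all factors of $\Delta$ and returns $F$ and $G$, which settles $(F,G)^{**}=(F,G)$.

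The heart of the argument is a logarithmic-derivative identity for $\Delta$. Differentiating $\Delta=F\overline{G}-\overline{F}G$, inserting the structural relations $\partial_{\overline{z}}F=aF+b\overline{F}$, $\partial_{\overline{z}}G=aG+b\overline{G}$, and using the conjugation rule $\partial_{\overline{z}}\overline{W}=\overline{\partial_{z}W}$ (immediate from the coordinate expressions of $\partial_{z},\partial_{\overline{z}}$), which turns $\partial_{\overline{z}}\overline{F}$ and $\partial_{\overline{z}}\overline{G}$ into $\overline{A}\,\overline{F}+\overline{B}F$ and $\overline{A}\,\overline{G}+\overline{B}G$, I expect the terms not proportional to $\Delta$ to cancel in pairs, leaving $\partial_{\overline{z}}\Delta=(a+\overline{A})\Delta$. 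The symmetric computation with $\partial_{z}$ gives $\partial_{z}\Delta=(A+\overline{a})\Delta$.

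Finally I would feed these into the derivatives of $F^{*}=-2\overline{F}/\Delta$ and $G^{*}=2\overline{G}/\Delta$. Applying the quotient rule to $\partial_{\overline{z}}F^{*}$ and inserting $\partial_{\overline{z}}\overline{F}=\overline{A}\,\overline{F}+\overline{B}F$ together with $\partial_{\overline{z}}\Delta=(a+\overline{A})\Delta$, the $\overline{A}$-terms cancel and one is left with $\partial_{\overline{z}}F^{*}=(2a\overline{F}-2\overline{B}F)/\Delta$. Re-expressing $\overline{F}$ and $F$ through $F^{*}=-2\overline{F}/\Delta$ and $\overline{F^{*}}=2F/\Delta$ converts this into $\partial_{\overline{z}}F^{*}=-aF^{*}-\overline{B}\,\overline{F^{*}}$, and the identical computation for $G^{*}$ produces the same coefficients; by uniqueness this yields $a_{(F^{*},G^{*})}=-a$ and $b_{(F^{*},G^{*})}=-\overline{B}$. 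Running the whole step once more with $\partial_{z}$ in place of $\partial_{\overline{z}}$ (now using $\partial_{z}\Delta=(A+\overline{a})\Delta$ and $\partial_{z}\overline{F}=\overline{a}\,\overline{F}+\overline{b}F$) gives $\partial_{z}F^{*}=-AF^{*}-\overline{b}\,\overline{F^{*}}$, hence $A_{(F^{*},G^{*})}=-A$ and $B_{(F^{*},G^{*})}=-\overline{b}$.

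I expect the only real obstacle to be bookkeeping rather than ideas: one must track exactly which products cancel in the identity for $\partial_{\overline{z}}\Delta$ and perform the back-substitution consistently, since the sign coming from $\overline{\Delta}=-\Delta$ and the two flavors of $j$-conjugation (on the coefficients and on $F,G$) are easy to mishandle. Everything else is a direct application of the structural relations and the uniqueness of the characteristic coefficients.
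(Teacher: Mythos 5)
Your proposal is correct, and in fact it supplies a proof that the paper itself omits: the proposition appears in the paper as one of several statements justified only ``by analogy with the complex case'' (Bers), with no computation given. Your argument is the natural bicomplex transcription of that classical computation, and every step checks out: $\overline{\Delta}=-\Delta$, $\overline{F^{*}}=2F/\Delta$, $\overline{G^{*}}=-2G/\Delta$, $F^{*}\overline{G^{*}}-\overline{F^{*}}G^{*}=-4/\Delta$, the cancellation giving $\partial_{\overline{z}}\Delta=(a+\overline{A})\Delta$ and $\partial_{z}\Delta=(A+\overline{a})\Delta$, and the back-substitution yielding $\partial_{\overline{z}}F^{*}=-aF^{*}-\overline{B}\,\overline{F^{*}}$ and $\partial_{z}F^{*}=-AF^{*}-\overline{b}\,\overline{F^{*}}$ (and the same coefficients for $G^{*}$), which by the paper's own uniqueness remark identifies all four starred coefficients. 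What your write-up adds beyond the complex case, and what makes the analogy legitimate here, is precisely the point the paper's whole introduction worries about: zero divisors. You should state this slightly more carefully than ``$\Delta$ is nowhere zero'': in $\mathbb{B}$ being nonzero does not permit division, and what you actually need (and have) is that $\Delta=-2j\operatorname{Vec}(\overline{F}G)$ is $j$ times a nowhere-vanishing $\mathbb{C}_{i}$-valued function, hence $\Delta\overline{\Delta}=\bigl(\operatorname{Vec}(\overline{F}G)\bigr)^{2}\cdot 4\neq 0$, so $\Delta$ is genuinely invertible; the same remark applies to $\Delta^{*}=-4/\Delta$, which is what makes $(F^{*},G^{*})$ a generating pair and makes Cramer's rule (and thus uniqueness of the starred coefficients) valid in the commutative ring $\mathbb{B}$. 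With that one sentence made explicit, your proof is complete and is exactly the verification the paper implicitly relies on.
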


\begin{proposition}
\label{adjointsucessor}If $\left( F_{-1},G_{-1}\right) $ is a predecessor of 
$(F,G)$ then $(F_{-1},G_{-1})^{\ast}$ is a successor of $(F,G)^{\ast}$.
\end{proposition}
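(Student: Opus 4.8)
The plan is to unwind both the hypothesis and the desired conclusion into statements purely about the characteristic coefficients $a$ and $B$, and then to reconcile them using the coefficient identities for adjoint pairs already recorded in Proposition \ref{cara_coef_(F,G)*}. First I would rewrite the hypothesis: by Definition \ref{DefSuccessor_bi}, saying that $(F_{-1},G_{-1})$ is a predecessor of $(F,G)$ is the same as saying $(F,G)$ is a successor of $(F_{-1},G_{-1})$, which yields the two scalar identities
\[
a_{(F,G)}=a_{(F_{-1},G_{-1})}, \qquad b_{(F,G)}=-B_{(F_{-1},G_{-1})}.
\]

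Next I would spell out what must be verified. Again by Definition \ref{DefSuccessor_bi}, the conclusion that $(F_{-1},G_{-1})^{\ast}$ is a successor of $(F,G)^{\ast}$ amounts to the pair of equalities
\[
a_{(F_{-1},G_{-1})^{\ast}}=a_{(F,G)^{\ast}}, \qquad b_{(F_{-1},G_{-1})^{\ast}}=-B_{(F,G)^{\ast}}.
\]
The key step is then purely substitutional. For the first equality, Proposition \ref{cara_coef_(F,G)*} gives $a_{(F,G)^{\ast}}=-a_{(F,G)}$ and $a_{(F_{-1},G_{-1})^{\ast}}=-a_{(F_{-1},G_{-1})}$, so the required identity reduces to $-a_{(F_{-1},G_{-1})}=-a_{(F,G)}$, which is immediate from the first hypothesis identity. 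For the second equality, the same proposition gives $b_{(F_{-1},G_{-1})^{\ast}}=-\overline{B_{(F_{-1},G_{-1})}}$ and $B_{(F,G)^{\ast}}=-\overline{b_{(F,G)}}$, so the claim becomes $-\overline{B_{(F_{-1},G_{-1})}}=\overline{b_{(F,G)}}$; conjugating both sides, this is exactly $b_{(F,G)}=-B_{(F_{-1},G_{-1})}$, the second hypothesis identity.

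The proof is therefore a direct verification, and I do not expect any genuine obstacle. The only point requiring a little care is the bookkeeping of the conjugations: because the $b$- and $B$-coefficients of an adjoint pair are conjugated (unlike the $a$- and $A$-coefficients, which only change sign), one must make sure the two bars appearing in the second equality cancel, which they do after taking complex conjugates of the whole identity. Once this is observed, both successor conditions fall out of the hypothesis term by term, and the proposition follows.
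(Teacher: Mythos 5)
Your proof is correct, and it is exactly the intended argument: the paper itself states Proposition \ref{adjointsucessor} without proof (it is one of the statements imported ``by analogy with the complex case'' of Bers), and the implicit justification is precisely your term-by-term verification, translating both the hypothesis and the conclusion into the successor identities of Definition \ref{DefSuccessor_bi} and cancelling signs and conjugations via Proposition \ref{cara_coef_(F,G)*}. The conjugation bookkeeping you flag is the only delicate point, and you resolve it correctly since the bar (conjugation in $j$) is an additive involution.
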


\begin{definition}
Let $\Gamma$ be a rectifiable curve leading from $z_{0}$ to $z_{1}$ and $W$
a continuous function on $\Gamma$. The $(F,G)$-*- integral of $W$ along $%
\Gamma$ is defined by%
\begin{equation*}
\ast\int_{\Gamma}Wd_{(F,G)}z=\limfunc{Sc}\int_{\Gamma}G^{\ast }Wdz+j\limfunc{%
Sc}\int_{\Gamma}F^{\ast}Wdz
\end{equation*}
and the $(F,G)$-integral is defined as%
\begin{equation*}
\int_{\Gamma}Wd_{(F,G)}z=F(z_{1})\limfunc{Sc}\int_{\Gamma}G^{\ast
}Wdz+G(z_{1})\limfunc{Sc}\int_{\Gamma}F^{\ast}Wdz.
\end{equation*}
\end{definition}

\begin{definition}
A continuous $\mathbb{B}$-valued function $W$ defined in $\Omega$ is called $%
(F,G)$-integrable if for every closed curve $\Gamma$ lying in a simply
connected subdomain of $\Omega$,%
\begin{equation*}
\ast\int_{\Gamma}Wd_{(F,G)}z=0\text{.}
\end{equation*}
\end{definition}

\begin{proposition}
\label{propMorera}Let $(F,G)$ be a predecessor of $(F_{1},G_{1})$ and $W$ be
a continuous function defined in a simply connected domain $\Omega$. Then

\begin{description}
\item[$(i)$] $W$ is $(F_{1,}G_{1})$-pseudoanalytic in $\Omega$ iff $W$ is $%
(F,G)$-integrable, that is iff%
\begin{equation*}
\limfunc{Sc}\int_{\Gamma}G^{\ast}Wdz=\limfunc{Sc}\int_{\Gamma }F^{\ast}Wdz=0
\end{equation*}
along every closed path $\Gamma\subset\Omega$.

\item[$(ii)$] If $W$ is $(F_{1,}G_{1})$-pseudoanalytic in $\Omega$ and $%
z_{0}\in\Omega$ then $w:=\int_{z_{0}}^{z}W(\tau)d_{(F,G)}\tau$ is $(F,G)$%
-pseudoanalytic and $W=\dfrac{d_{(F,G)}w}{dz}$.
\end{description}
\end{proposition}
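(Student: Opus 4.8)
The plan is to follow Bers' strategy, reducing both parts to a pair of algebraic identities for the adjoint pair together with a single use of Green's formula. First I would record the elementary facts about $(F^{\ast },G^{\ast })$ that drive everything. Writing $\sigma =F\overline{G}-\overline{F}G$ one verifies directly that $\func{Sc}(G^{\ast }F)=\func{Sc}(F^{\ast }G)=1$ and $\func{Sc}(G^{\ast }G)=\func{Sc}(F^{\ast }F)=0$, whence
\begin{equation*}
G^{\ast }F+F^{\ast }G=2,\qquad \overline{G^{\ast }}F+\overline{F^{\ast }}G=0,
\end{equation*}
the second identity because $\overline{\sigma }=-\sigma $ and the ring is commutative. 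I would also record the bicomplex Green formula, obtained from $dz=dx+j\,dy$ and $\func{Sc}(jX)=-\func{Vec}(X)$, namely $\func{Sc}\oint_{\Gamma }\Phi \,dz=-2\iint_{D}\func{Vec}(\partial _{\overline{z}}\Phi )\,d\Omega $. This turns the vanishing of the two scalar integrals over every closed $\Gamma $ in a simply connected subdomain into the pointwise requirement that $\partial _{\overline{z}}(G^{\ast }W)$ and $\partial _{\overline{z}}(F^{\ast }W)$ be scalar ($\mathbb{C}_{i}$-valued).

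For the forward implication of $(i)$ I would compute $\partial _{\overline{z}}(G^{\ast }W)$ for an $(F_{1},G_{1})$-pseudoanalytic $W$, which by the earlier propositions is of class $C^{1}$ so that Green's formula applies. Since $(F,G)$ is a predecessor of $(F_{1},G_{1})$, Definition \ref{DefSuccessor_bi} gives $\partial _{\overline{z}}W=aW-B\overline{W}$ with $a=a_{(F,G)}$, $B=B_{(F,G)}$, while Proposition \ref{cara_coef_(F,G)*} gives $\partial _{\overline{z}}G^{\ast }=-aG^{\ast }-\overline{B}\,\overline{G^{\ast }}$. Substituting, the terms in $a$ cancel and one is left with
\begin{equation*}
\partial _{\overline{z}}(G^{\ast }W)=-\overline{B}\,\overline{G^{\ast }}W-B\,G^{\ast }\overline{W},
\end{equation*}
which is invariant under $j$-conjugation and hence scalar; the same computation applies to $F^{\ast }$. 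By the Green formula both scalar integrals vanish over every closed path, i.e. $W$ is $(F,G)$-integrable.

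For the converse of $(i)$ and for $(ii)$ I would assume $(F,G)$-integrability and build the antiderivative explicitly. Integrability makes the scalar line integrals path independent, so $\Phi :=\func{Sc}\int_{z_{0}}^{z}G^{\ast }W\,d\zeta $ and $\Psi :=\func{Sc}\int_{z_{0}}^{z}F^{\ast }W\,d\zeta $ are well-defined $C^{1}$ scalar functions and $w:=\int_{z_{0}}^{z}W\,d_{(F,G)}\zeta =\Phi F+\Psi G$. From $d\Phi =\func{Sc}(G^{\ast }W\,dz)$ one reads off $\partial _{z}\Phi =\tfrac{1}{2}G^{\ast }W$ and $\partial _{\overline{z}}\Phi =\tfrac{1}{2}\overline{G^{\ast }W}$, and similarly with $F^{\ast }$. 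Feeding these into the representation $\partial _{\overline{z}}w=(aw+b\overline{w})+(\partial _{\overline{z}}\Phi )F+(\partial _{\overline{z}}\Psi )G$, obtained by differentiating $w=\Phi F+\Psi G$ and using $\partial _{\overline{z}}F=aF+b\overline{F}$, $\partial _{\overline{z}}G=aG+b\overline{G}$, shows that the pseudoanalyticity condition reduces to $\overline{G^{\ast }}F+\overline{F^{\ast }}G=0$, which holds; thus $w$ satisfies (\ref{Vekua_equation (F,G)}) and, being $C^{1}$, is $(F,G)$-pseudoanalytic. Computing its derivative from (\ref{derivative_with_characteristic}) gives $\overset{\circ }{w}=\tfrac{1}{2}W(G^{\ast }F+F^{\ast }G)=W$ by the first identity. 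Finally Theorem \ref{ThBersDer_bi} turns $\overset{\circ }{w}=W$ into the statement that $W$ is $(F_{1},G_{1})$-pseudoanalytic, closing the equivalence in $(i)$; and since an $(F_{1},G_{1})$-pseudoanalytic $W$ is $(F,G)$-integrable by the previous paragraph, the construction of $w$ proves $(ii)$ as well.

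I expect the only delicate step to be the forward direction of $(i)$: the cancellation that makes $\partial _{\overline{z}}(G^{\ast }W)$ self-conjugate depends on the precise signs in the predecessor relation $b_{(F_{1},G_{1})}=-B_{(F,G)}$ and in the adjoint coefficient $b_{(F^{\ast },G^{\ast })}=-\overline{B_{(F,G)}}$, so that bookkeeping must be done carefully. Everything else is routine once the two adjoint identities $G^{\ast }F+F^{\ast }G=2$ and $\overline{G^{\ast }}F+\overline{F^{\ast }}G=0$ are in hand.
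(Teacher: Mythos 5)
Your proof is correct and follows the same route the paper relies on: the paper gives no explicit proof of Proposition \ref{propMorera}, asserting it by analogy with the complex case of L. Bers \cite{Bers Book}, and your argument --- the adjoint identities $G^{\ast }F+F^{\ast }G=2$ and $\overline{G^{\ast }}F+\overline{F^{\ast }}G=0$, the bicomplex Green formula for the forward implication, and the explicit antiderivative $w=\Phi F+\Psi G$ combined with Theorem \ref{ThBersDer_bi} for the converse and for part $(ii)$ --- is exactly Bers' classical argument transcribed to the bicomplex setting, with the sign bookkeeping for $b_{(F_{1},G_{1})}=-B_{(F,G)}$ and $b_{(F^{\ast },G^{\ast })}=-\overline{B_{(F,G)}}$ done correctly. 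The one genuinely bicomplex point, which your computations use implicitly whenever you divide by $\sigma =F\overline{G}-\overline{F}G$, is that $\sigma =-2j\func{Vec}(\overline{F}G)$ is actually invertible in $\mathbb{B}$ and not merely nonzero (which, in the presence of zero divisors, would not suffice); this holds because $\func{Vec}(\overline{F}G)$ is a nonvanishing $\mathbb{C}_{i}$-valued function and $j$ is invertible.
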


\begin{remark}
The above proposition remains true for multiply-connected domains, except
that in $(ii)$ $w$ may be multiple valued.
\end{remark}

\subsection{Formal powers\label{Subsect Formal powers}}

Let $(F,G)$ be a generating pair corresponding to (\ref{Vekua_equation (F,G)}%
) in some domain $\Omega\subset\mathbb{R}^{2}$, $n\in\mathbb{Z}$ and $%
z_{0}=x_{0}+jy_{0}\in\Omega$. We call formal powers of order $n$ and center $%
z_{0}$ to a pair of solutions $Z^{(n)}(1,z_{0},z)$, $Z^{(n)}(j,z_{0},z)$ of (%
\ref{Vekua_equation (F,G)}) in $\Omega\backslash\left\{ z_{0}\right\} $ such
that 
\begin{equation}
Z^{(n)}(1,z_{0},z)\sim\left( z-z_{0}\right) ^{n},\quad
Z^{(n)}(j,z_{0},z)\sim j\left( z-z_{0}\right) ^{n},\quad\text{as }%
z\rightarrow z_{0}\text{,}  \label{FP-assimp}
\end{equation}
where $\left( z-z_{0}\right) ^{n}=\left[ (x-x_{0})+j(y-y_{0})\right] ^{n}$
are the usual powers of the $\mathbb{B}$-analytic functions. For $\alpha \in%
\mathbb{B}$, the following definition will be useful%
\begin{equation}
Z^{(n)}(\alpha,z_{0},z):=\limfunc{Sc}\alpha\text{ }Z^{(n)}(1,z_{0},z)+%
\limfunc{Vec}\alpha\text{ }Z^{(n)}(j,z_{0},z).  \label{FP_coefalpha}
\end{equation}
The function defined by (\ref{FP_coefalpha}) is called formal power of the
order $n$ with the coefficient $\alpha$ and the center $z_{0}$. In this work
we are mainly interested in negative formal powers, that is when $n<0$. We
emphasize that the formal powers are not uniquely defined. For example, if a
regular solution of (\ref{Vekua_equation (F,G)}) is added to a negative
formal power the resulting solution will be again a negative formal power of
the same order, center and coefficient as the initial one.

The special case when $n=-1$ is distinguished: the formal power $%
Z^{(-1)}(\alpha,z_{0},z)$ playing an important role in the study of
pseudoanalytic functions is called Cauchy kernel. Next we obtain some
asymptotic formulas for the Cauchy kernel that will be important in order to
establish the Cauchy integral formula in the subsequent section. First we
introduce the following definition.

\begin{definition}
Let $W$, $g$ be a $\mathbb{B}$-valued and an $\mathbb{R}$-valued functions
respectively. We agree that the notation $W=\mathcal{O}(g),$ as $%
z\rightarrow z_{0},$ means that the function $\dfrac{W(z)}{g(z)}$ is bounded
in some neighborhood of $z_{0}$.
\end{definition}

\begin{proposition}
\label{C_K_assimpt}For $\alpha=1$or $\alpha=j$ the asymptotic formulas%
\begin{equation}
\lim_{z\rightarrow z_{0}}\left\vert \frac{\overline{Z^{(-1)}(\alpha,z_{0},z)}%
}{Z^{(-1)}(\alpha,z_{0},z)}\right\vert =1  \label{c-k-bounded}
\end{equation}
and 
\begin{equation}
Z^{(-1)}(\alpha,z_{0},z)=\frac{\alpha}{z-z_{0}}+\mathcal{O(}\log\left\vert
z-z_{0}\right\vert ),\quad\text{as \ }z\rightarrow z_{0}
\label{C-K-difference}
\end{equation}
hold.
\end{proposition}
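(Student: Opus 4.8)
The plan is to establish the two formulas in order: (\ref{c-k-bounded}) comes essentially for free from the leading-order behaviour built into the definition (\ref{FP-assimp}), while the sharper estimate (\ref{C-K-difference}) is obtained by feeding (\ref{c-k-bounded}) into the similarity principle (Theorem \ref{Similarity. P}). Write $W(z):=Z^{(-1)}(\alpha,z_{0},z)$ and read the relation ``$\sim$'' in (\ref{FP-assimp}) as $(z-z_{0})W(z)\to\alpha$, i.e. $W=\frac{\alpha}{z-z_{0}}\left(1+o(1)\right)$ as $z\to z_{0}$. Since $\alpha\in\{1,j\}$ and $z-z_{0}$ are invertible off $z_{0}$ (here $z-z_{0}=(x-x_{0})+j(y-y_{0})$ with real coordinates, so $(z-z_{0})\overline{(z-z_{0})}=(x-x_{0})^{2}+(y-y_{0})^{2}>0$), and $\mathcal{R}(\mathbb{B})$ is open (Proposition \ref{R(B)_open_prop}), the function $W$ is invertible in a punctured neighbourhood of $z_{0}$, so $\overline{W}/W$ is well defined there.

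For (\ref{c-k-bounded}) I would pass to the idempotent components. Using $\overline{W}=P^{+}W^{-}+P^{-}W^{+}$ one computes $\overline{W}W^{-1}=P^{+}W^{-}(W^{+})^{-1}+P^{-}W^{+}(W^{-})^{-1}$, whence (\ref{bicom_norm}) gives $\bigl|\overline{W}/W\bigr|=\tfrac12\bigl(|W^{-}|/|W^{+}|+|W^{+}|/|W^{-}|\bigr)$, a quantity that tends to $1$ precisely when $|W^{+}|/|W^{-}|\to1$. The components of the leading term $\alpha/(z-z_{0})$ are $\alpha^{\pm}/(z-z_{0})^{\pm}$, and a direct check shows $|(z-z_{0})^{+}|=|(z-z_{0})^{-}|=|z-z_{0}|$ together with $|\alpha^{+}|=|\alpha^{-}|=1$ for $\alpha=1,j$ (by (\ref{W+-})). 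Hence $|W^{\pm}|\sim|z-z_{0}|^{-1}$, the ratio $|W^{+}|/|W^{-}|$ tends to $1$, and (\ref{c-k-bounded}) follows.

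In particular $\overline{W}/W$ is bounded near $z_{0}$, so on a small disc $\Omega'=B(z_{0},\rho)$ on which $W$ is invertible the hypotheses of Theorem \ref{Similarity. P} hold (with the single exceptional point $z_{0}$), giving $W=\Psi E[S]$ with $S=T_{\overline{z}}(a+\tfrac{\overline{W}}{W}b)$ and $\Psi$ $\mathbb{B}$-analytic in $\Omega'\setminus\{z_{0}\}$. Because $a+\tfrac{\overline{W}}{W}b$ is bounded measurable, Proposition \ref{Tz_prop} guarantees that $S$ is bounded with modulus of continuity $|S(z)-S(z_{0})|\le C|z-z_{0}|\{1+\log^{+}\tfrac1{|z-z_{0}|}\}$; consequently $E[\pm S]$ are bounded and, by Proposition \ref{Ez_prop}, $E[S]=E[S(z_{0})]+\mathcal{O}\bigl(|z-z_{0}|\log\tfrac1{|z-z_{0}|}\bigr)$. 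From $\Psi=WE[-S]$ and $(z-z_{0})W\to\alpha$ one gets $(z-z_{0})\Psi\to\alpha E[-S(z_{0})]=:c$, so the $\mathbb{B}$-analytic $\Psi$ has at most a simple pole; using the representation $\partial_{\overline{z}}=P^{+}d_{z}+P^{-}d_{\overline{z}}$ (so that $\mathbb{B}$-analytic functions split into a holomorphic and an antiholomorphic component), its singular part is exactly $c/(z-z_{0})$ and $\Psi=c/(z-z_{0})+\Psi_{0}$ with $\Psi_{0}$ bounded near $z_{0}$. Multiplying out and using $cE[S(z_{0})]=\alpha E[-S(z_{0})]E[S(z_{0})]=\alpha$ (Proposition \ref{Ez_prop}$(i)$) yields $W=\frac{\alpha}{z-z_{0}}+\frac{c}{z-z_{0}}\bigl(E[S]-E[S(z_{0})]\bigr)+\Psi_{0}E[S]$, whose two error contributions are $\mathcal{O}(\log|z-z_{0}|)$ and $\mathcal{O}(1)$ respectively, both absorbed into $\mathcal{O}(\log|z-z_{0}|)$; this is (\ref{C-K-difference}).

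The main obstacle is the passage to the sharp $\mathcal{O}(\log|z-z_{0}|)$ bound rather than a mere $o(|z-z_{0}|^{-1})$. Everything hinges on the fact, supplied by Proposition \ref{Tz_prop}, that $S$ carries the ``$|z-z_{0}|\log$'' modulus of continuity: this is exactly what forces the cross term $\frac{c}{z-z_{0}}(E[S]-E[S(z_{0})])$ to drop from pole order down to logarithmic order, the logarithm in (\ref{C-K-difference}) being inherited from the $\log^{+}$ in the estimate for $T_{\overline{z}}$. A secondary delicate point is the justification that $\Psi$ has no worse than a simple pole with residue $c$; this uses the boundedness and invertibility of $E[-S]$ near $z_{0}$ together with the holomorphic/antiholomorphic decomposition of $\mathbb{B}$-analytic functions. (One could instead derive (\ref{C-K-difference}) directly from the integral equation of Proposition \ref{prop_int_eq}, estimating $T_{\overline{z}}(aW+b\overline{W})$ via the convolution $\int_{\Omega}|z-\zeta|^{-1}|\zeta-z_{0}|^{-1}\,d\Omega_{\zeta}=\mathcal{O}(\log|z-z_{0}|)$; the bottleneck is identical.)
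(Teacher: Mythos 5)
Your proof is correct and follows essentially the same route as the paper: formula (\ref{c-k-bounded}) via invertibility of the kernel near $z_{0}$ (openness of $\mathcal{R}(\mathbb{B})$) plus a norm computation, and (\ref{C-K-difference}) via Theorem \ref{Similarity. P}, extraction of the simple pole $c/(z-z_{0})$ from the $\mathbb{B}$-analytic factor $\Psi$, and the logarithmic modulus-of-continuity estimates from Propositions \ref{Tz_prop} and \ref{Ez_prop}. The only cosmetic differences are that you check (\ref{c-k-bounded}) in idempotent components where the paper multiplies by the unimodular $\mathbb{C}_{j}$-factor $\overline{(z-z_{0})}/(z-z_{0})$, and that you spell out (via the holomorphic/antiholomorphic splitting) the removable-singularity step the paper only asserts.
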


\begin{proof}
We prove the theorem for $\alpha=1$, the case when $\alpha=j$ can be treated
by analogy.

$(i)$ By the definition of $Z^{(-1)}(1,z_{0},z)$ we obtain 
\begin{equation}
\lim_{z\rightarrow z_{0}}(z-z_{0})Z^{(-1)}(1,z_{0},z)=1.
\label{def_kernelcoef1_prop}
\end{equation}
From the fact that $\mathcal{R}(\mathbb{B)}$ is open (Proposition \ref%
{R(B)_open_prop}) and $1\in\mathcal{R}(\mathbb{B)}$ we conclude that there
exists some neighborhood of $z_{0}$ denoted by $N_{z_{0}}$ in which $%
(z-z_{0})Z^{(-1)}(1,z_{0},z)\in\mathcal{R}(\mathbb{B)}$. Particularly $%
Z^{(-1)}(1,z_{0},z)\in\mathcal{R}(\mathbb{B)}$ in this neighborhood because $%
(z-z_{0})\in\mathcal{R}(\mathbb{B)}$ for all $z$, $z_{0}\in\mathbb{C}_{j}$.
Hence the function on the left hand side of (\ref{c-k-bounded}) is well
defined for $z\in N_{z_{0}}$. Finally, using (\ref{def_kernelcoef1_prop})
and Proposition \ref{Prop_norm} we have%
\begin{align*}
\left\vert \frac{\overline{Z^{(-1)}(1,z_{0},z)}}{Z^{(-1)}(1,z_{0},z)}%
\right\vert & =\left\vert \frac{\overline{(z-z_{0})}}{(z-z_{0})}\right\vert
\left\vert \frac{\overline{Z^{(-1)}(1,z_{0},z)}}{Z^{(-1)}(1,z_{0},z)}%
\right\vert = \\
& =\left\vert \frac{\overline{(z-z_{0})}}{(z-z_{0})}\frac{\overline {%
Z^{(-1)}(1,z_{0},z)}}{Z^{(-1)}(1,z_{0},z)}\right\vert \rightarrow1
\end{align*}
when $z\rightarrow z_{0}$.

$(ii)$ Let $N_{z_{0}}$ be the neighborhood of $z_{0}$ from $(i)$ in which
the left side of (\ref{c-k-bounded}) is bounded. Theorem \ref{Similarity. P}
guarantees the existence of a $\mathbb{B}$-analytic function $\Psi$ in $%
N_{z_{0}}\backslash\left\{ z_{0}\right\} $ such that%
\begin{equation*}
Z^{(-1)}(1,z_{0},z)=\Psi(z)E\left[ S(z)\right] ,
\end{equation*}
where $S=T_{\overline{z}}\left[ a(z)+\frac{\overline{Z^{(-1)}(1,z_{0},z)}}{%
Z^{(-1)}(1,z_{0},z)}b(z)\right] $. We see that $\underset{z\rightarrow z_{0}}%
{\lim}(z-z_{0})\Psi(z)=E\left[ -S(z_{0})\right] $ which allows one to
conclude that 
\begin{equation*}
E\left[ S(z_{0})\right] \Psi(z)=\frac{1}{z-z_{0}}+R\left( z\right)
\end{equation*}
where $R\left( z\right) $ is a $\mathbb{B}$-analytic function in $N_{z_{0}}$%
. Then we have%
\begin{equation}
\begin{array}{c}
Z^{(-1)}(1,z_{0},z)-\dfrac{1}{z-z_{0}}= \\ 
=\dfrac{1}{z-z_{0}}\left( E\left[ S(z)-S(z_{0})\right] -1\right) +R\left(
z\right) E\left[ S(z)-S(z_{0})\right] .%
\end{array}
\label{kernels-dif}
\end{equation}
From Propositions \ref{Ez_prop} and \ref{Tz_prop} (in both part $(ii)$) we
obtain the following inequalities in $N_{z_{0}}$%
\begin{align*}
\left\vert E\left[ S(z)-S(z_{0})\right] -1\right\vert & \leq e^{2\left\vert
S(z)-S(z_{0})\right\vert }-1\leq M_{1}\left\vert S(z)-S(z_{0})\right\vert \\
& \leq M_{1}M_{2}\left\vert z-z_{0}\right\vert \left\{ 1+\log^{+}\frac {1}{%
\left\vert z-z_{0}\right\vert }\right\}
\end{align*}
which together with (\ref{kernels-dif}) implies the assertion.
\end{proof}

\bigskip

As we mentioned above if no other condition is imposed then the formal
powers are not uniquely defined. However, as we discuss next, under certain
conditions regarding the point of infinity they are uniquely determined.
Until the end of this subsection we suppose that $F$ and $G$ are $\mathbb{C}%
_{j}$-valued functions defined everywhere and that $(F,G)$ is a complete
generating pair, that is, $(F,G)$ is a generating pair in $\mathbb{R}^{2}$
such that $F(\infty)$, $G(\infty)$ exist, $Vec(\overline{F(\infty)}%
G(\infty)\neq0$ and the functions $F(\frac{1}{z})$, $G(\frac{1}{z})$ are H%
\"{o}lder continuous.

The results of L. Bers \cite{BersFP} together with Remark \ref%
{remark_decouple_vekua} allow us to obtain the following statements.

\begin{proposition}
\label{prop_global_FP}Under the above conditions for any integer $n$ and for
every pair of numbers $\alpha\in\mathbb{B}$, $\alpha\neq0$ and $z_{0}\in%
\mathbb{C}_{j}$ there exists one and only one $(F,G)$-pseudoanalytic
function $Z^{(n)}(\alpha,z_{0},z)$ defined in $\mathbb{C}_{j}\backslash
\left\{ z_{0}\right\} $ such that%
\begin{equation*}
Z^{(n)}(\alpha,z_{0},z)\sim\alpha\left( z-z_{0}\right) ^{n},\quad
z\rightarrow z_{0}
\end{equation*}
and%
\begin{equation}
Z^{(n)}(\alpha,z_{0},z)=\mathcal{O}(\left\vert z\right\vert ^{n}),\quad
z\rightarrow\infty  \label{CK_cond_infinity}
\end{equation}
and having no other zeros or poles except at $z=\infty$.
\end{proposition}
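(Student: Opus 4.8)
The plan is to reduce the problem to the classical complex theory through the decoupling of Remark \ref{remark_decouple_vekua} and then to invoke Bers' existence and uniqueness of global formal powers \cite{BersFP}. Since $F$ and $G$ are $\mathbb{C}_{j}$-valued, the characteristic coefficients $a$ and $b$ of $(F,G)$ are $\mathbb{C}_{j}$-valued as well (they are built from $F$, $G$, their $j$-conjugates and their partial derivatives, all of which stay in $\mathbb{C}_{j}$). Hence Remark \ref{remark_decouple_vekua} applies: every $\mathbb{B}$-valued $W$ splits as $W=H_{1}+iH_{2}$ with $H_{1},H_{2}$ the $\mathbb{C}_{j}$-valued functions defined there, and $W$ solves (\ref{Vekua_equation (F,G)}) if and only if $H_{1}$ and $H_{2}$ both solve the single complex Vekua equation (\ref{decoupledvekua}), for which $(F,G)$ is, by hypothesis, a complete generating pair in the classical sense.

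For existence I would apply \cite{BersFP} to this complex Vekua equation: for each coefficient $\beta\in\mathbb{C}_{j}$ it furnishes a unique complex global formal power $Z_{\mathbb{C}}^{(n)}(\beta,z_{0},z)$ on $\mathbb{C}_{j}\backslash\{z_{0}\}$ with $Z_{\mathbb{C}}^{(n)}(\beta,z_{0},z)\sim\beta(z-z_{0})^{n}$ at $z_{0}$, with $Z_{\mathbb{C}}^{(n)}(\beta,z_{0},z)=\mathcal{O}(|z|^{n})$ at infinity, and free of finite zeros and poles away from $z_{0}$. Taking $Z^{(n)}(1,z_{0},z):=Z_{\mathbb{C}}^{(n)}(1,z_{0},z)$, $Z^{(n)}(j,z_{0},z):=Z_{\mathbb{C}}^{(n)}(j,z_{0},z)$ and defining $Z^{(n)}(\alpha,z_{0},z)$ by (\ref{FP_coefalpha}), I would use that the solution set of (\ref{Vekua_equation (F,G)}) is a $\mathbb{C}_{i}$-module (for $\lambda\in\mathbb{C}_{i}$ conjugation with respect to $j$ fixes $\lambda$, so $\partial_{\overline{z}}(\lambda W)-a\lambda W-b\overline{\lambda W}=\lambda(\partial_{\overline{z}}W-aW-b\overline{W})$) to conclude that $Z^{(n)}(\alpha,z_{0},z)$ is $(F,G)$-pseudoanalytic; summing the two complex asymptotics yields $Z^{(n)}(\alpha,z_{0},z)\sim\alpha(z-z_{0})^{n}$ and the growth (\ref{CK_cond_infinity}). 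Writing $\alpha=\alpha_{1}+i\alpha_{2}$ with $\alpha_{1},\alpha_{2}\in\mathbb{C}_{j}$ and using the real-linearity of formal powers in their coefficient, (\ref{FP_coefalpha}) gives the clean identification $Z^{(n)}(\alpha,z_{0},z)=Z_{\mathbb{C}}^{(n)}(\alpha_{1},z_{0},z)+iZ_{\mathbb{C}}^{(n)}(\alpha_{2},z_{0},z)$, i.e. the two decoupled components are precisely the complex formal powers with coefficients $\alpha_{1}$ and $\alpha_{2}$.

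Uniqueness runs the decoupling backwards. If $W$ is any $(F,G)$-pseudoanalytic function satisfying the three requirements, then $H_{1},H_{2}$ solve (\ref{decoupledvekua}) and the asymptotic and growth conditions on $W$ descend to $H_{k}\sim\alpha_{k}(z-z_{0})^{n}$ and $H_{k}=\mathcal{O}(|z|^{n})$; Bers' uniqueness then identifies $H_{1}$ and $H_{2}$ with $Z_{\mathbb{C}}^{(n)}(\alpha_{1},z_{0},z)$ and $Z_{\mathbb{C}}^{(n)}(\alpha_{2},z_{0},z)$, so $W=Z^{(n)}(\alpha,z_{0},z)$.

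The step I expect to be the main obstacle is the clause ``having no other zeros or poles'', precisely because of the zero divisors in $\mathbb{B}$. In the idempotent representation $Z^{(n)}(\alpha,z_{0},z)=P^{+}(Z^{(n)}(\alpha,z_{0},z))^{+}+P^{-}(Z^{(n)}(\alpha,z_{0},z))^{-}$ the two scalar components behave like $\alpha^{+}\overline{\zeta}^{\,n}$ and $\alpha^{-}\zeta^{n}$ near $z_{0}$, where $\zeta=(x-x_{0})+i(y-y_{0})$, so invertibility of $Z^{(n)}(\alpha,z_{0},z)$ off $\{z_{0},\infty\}$ amounts to the simultaneous nonvanishing of both components there. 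When $\alpha\in\mathbb{C}_{j}$ (in particular $\alpha=1$ and $\alpha=j$) the function is itself $\mathbb{C}_{j}$-valued and each component is, through the ring isomorphisms $\mathbb{C}_{j}\to\mathbb{C}_{i}$ induced by $P^{\pm}$, a single complex formal power, so the claim is immediate from Bers' theory. For a general coefficient, however, $(Z^{(n)}(\alpha,z_{0},z))^{\pm}$ is a genuine combination of two complex formal powers, since $P^{\pm}$ does not decouple the Vekua equation once the solution leaves $\mathbb{C}_{j}$, and excluding spurious extra zeros is the delicate point; I would control each component near $z_{0}$ via the local factorization behind Proposition \ref{C_K_assimpt} and the similarity principle (Theorem \ref{Similarity. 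P}), and at infinity via the growth hypothesis (\ref{CK_cond_infinity}) --- this is exactly where the zero-divisor structure of $\mathbb{B}$ makes the bicomplex statement harder than its complex prototype.
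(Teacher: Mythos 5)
Your core reduction is exactly the paper's proof: the paper offers nothing beyond the one-sentence remark that the proposition follows from Bers' results \cite{BersFP} together with Remark \ref{remark_decouple_vekua}, i.e. from the splitting $W=H_{1}+iH_{2}$ and the componentwise identification $Z^{(n)}(\alpha ,z_{0},z)=Z_{\mathbb{C}}^{(n)}(\alpha _{1},z_{0},z)+iZ_{\mathbb{C}}^{(n)}(\alpha _{2},z_{0},z)$ that you spell out. Your existence argument is correct, and so is the uniqueness argument, up to one small caveat: when one of the components $\alpha _{k}$ vanishes, Bers' theorem as stated (it assumes a nonzero coefficient) does not apply to $H_{k}$; you need the Liouville-type argument underlying it to conclude $H_{k}\equiv 0$.

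The genuine defect is in your last paragraph, i.e. in your treatment of the clause ``having no other zeros or poles''. You read a ``zero'' as a point where $Z^{(n)}(\alpha ,z_{0},z)$ fails to be invertible in $\mathbb{B}$ (equivalently, where one of its idempotent components vanishes), and you defer the proof to a local-factorization/similarity-principle argument. Under that reading the statement is false, so no such argument can be completed: the proposition allows zero-divisor coefficients, and for the pair $(F,G)=(1,j)$ with $\alpha =P^{+}=\frac{1}{2}(1+ij)\neq 0$ the only candidate (unique by the two asymptotic conditions, argued componentwise) is $Z^{(n)}(P^{+},z_{0},z)=P^{+}(z-z_{0})^{n}$, which, since $\overline{P^{+}}=P^{-}$ and $P^{+}P^{-}=0$, lies in $\sigma (\mathbb{B})$ at every $z\neq z_{0}$; its minus idempotent component is identically zero. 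The only reading under which the clause holds for every $\alpha \neq 0$ --- and the one consistent with the componentwise transfer from Bers --- is that $Z^{(n)}(\alpha ,z_{0},z)\neq 0$ as an element of $\mathbb{B}$. Under this reading the clause is immediate from your own decoupling and needs neither Proposition \ref{C_K_assimpt} nor Theorem \ref{Similarity. P}: a zero of $W=H_{1}+iH_{2}$ forces $H_{1}(z)=H_{2}(z)=0$, and since $\alpha \neq 0$ at least one $\alpha _{k}\neq 0$, for which Bers' theorem guarantees that $H_{k}=Z_{\mathbb{C}}^{(n)}(\alpha _{k},z_{0},\cdot )$ never vanishes on $\mathbb{C}_{j}\backslash \{z_{0}\}$. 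So the ``main obstacle'' you single out is, depending on interpretation, either false or trivial; in either case the strategy you sketch for it should be replaced by this observation.
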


The functions described in the above proposition are called global formal
powers. The negative global formal powers enjoy the following interesting
relations.

\begin{theorem}
\label{theoremFPRelation}Let $\left\{ (F_{m},G_{m})\right\} $, $m=0,\pm
1,\pm 2,\ldots $ be a generating sequence of complete generating pairs in
which $(F,G)$ is embedded and $Z_{m}^{(-n)}(\alpha ,z_{0},z)$, $Z_{m}^{\ast
(-n)}(\alpha ,z_{0},z)$ denote the $(F_{m},G_{m})$- and the $%
(F_{m},G_{m})^{\ast }$- negative formal powers, respectively. Then for any
positive integer $n$ we have%
\begin{align*}
\limfunc{Sc}Z_{m}^{(-n)}(j,z_{0},z)+j\limfunc{Sc}Z_{m}^{(-n)}(1,z_{0},z)&
=(-1)^{n}Z_{m-n}^{\ast (-n)}(j,z,z_{0})\bigskip \\
\limfunc{Vec}Z_{m}^{(-n)}(j,z_{0},z)+j\limfunc{Vec}Z_{m}^{(-n)}(1,z_{0},z)&
=(-1)^{n}Z_{m-n}^{\ast (-n)}(1,z,z_{0}).
\end{align*}
\end{theorem}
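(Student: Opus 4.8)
The plan is to reduce the bicomplex identity to its classical complex counterpart established by Bers in \cite{BersFP}, exploiting the hypothesis that $F$ and $G$ are $\mathbb{C}_j$-valued. First I would check that under the stated assumptions the whole framework lives inside $\mathbb{C}_j$: since $(F_0,G_0)=(F,G)$ is $\mathbb{C}_j$-valued, its characteristic coefficients $a,b,A,B$ are $\mathbb{C}_j$-valued, hence so are all successors $(F_m,G_m)$, and, by Definition \ref{DefAdjoint_bi} and Proposition \ref{cara_coef_(F,G)*}, all the adjoint pairs $(F_m,G_m)^{\ast}$. Consequently, by the uniqueness in Proposition \ref{prop_global_FP} together with the asymptotics $\alpha(z-z_0)^{-n}$ for $\alpha\in\{1,j\}$, every formal power occurring in the statement is $\mathbb{C}_j$-valued. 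For a $\mathbb{C}_j$-valued function $W$ both $\func{Sc}W$ and $\func{Vec}W$ are real, so each side of the claimed identities is genuinely $\mathbb{C}_j$-valued and the statement is an identity between complex (i.e. $\mathbb{C}_j$-) pseudoanalytic formal powers.

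Next I would record the bookkeeping that converts the bicomplex statement into the complex reciprocity formula. Writing $Z_m^{(-n)}(1,z_0,z)=p_1+jq_1$ and $Z_m^{(-n)}(j,z_0,z)=p_j+jq_j$ with real $p_1,q_1,p_j,q_j$, the two left-hand sides become $p_j+jp_1$ and $q_j+jq_1$; thus they are built from the scalar and vector components of the two basis formal powers by interchanging the roles of the coefficients $1$ and $j$ — precisely the recombination appearing in Bers' reciprocity relation between the negative formal powers of a generating sequence and those of the adjoint sequence. Via Remark \ref{remark_decouple_vekua} the bicomplex Vekua equation for each $(F_m,G_m)$ becomes a complex Vekua equation over $\mathbb{C}_j$, and the bicomplex adjoint of Definition \ref{DefAdjoint_bi} reduces to Bers' complex adjoint generating pair; Proposition \ref{adjointsucessor} then guarantees that the adjoint objects again form a generating sequence and accounts for the index shift $m\mapsto m-n$, while the sign $(-1)^n$ and the interchange of $z$ and $z_0$ are read off directly from Bers' formula.

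To carry this out cleanly I would argue by induction on $n$, with the identity quantified over all $m$, mirroring Bers' original argument. For $n=1$ the identity is the duality between the $(F_m,G_m)$-Cauchy kernel and the $(F_{m-1},G_{m-1})^{\ast}$-Cauchy kernel that underlies the Cauchy integral formula, which I would establish from the defining asymptotics of $Z^{(-1)}$ and the adjoint relations of Propositions \ref{cara_coef_(F,G)*} and \ref{adjointsucessor}. For the inductive step I would use that the $(F_{m-1},G_{m-1})$-derivative carries $Z_{m-1}^{(-n)}$ to a constant multiple of $Z_m^{(-(n+1))}$ (Theorem \ref{ThBersDer_bi} together with the formal-power calculus of \cite{BersFP}), and that the dual operation on the adjoint side is the antiderivative furnished by Proposition \ref{propMorera}; invoking Proposition \ref{adjointsucessor} to track the index shift, I would verify that these two operations are intertwined so as to raise $n$ by one consistently on both sides.

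The main obstacle I expect is the purely organizational but delicate matching of the $\func{Sc}/\func{Vec}$ components with the swap $z\leftrightarrow z_0$ and the shift $m\mapsto m-n$: one must verify that the interchange of scalar and vector parts built into the left-hand sides corresponds exactly to passing to the adjoint pair with interchanged arguments, and that no stray sign or factor of $j$ is introduced when translating between the bicomplex conjugation (with respect to $j$) used in Definition \ref{DefAdjoint_bi} and the complex conjugation implicit in Bers' adjoint. Once this dictionary is fixed, the identity becomes a direct transcription of the result in \cite{BersFP}.
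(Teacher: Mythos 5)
Your overall route coincides with the paper's: the paper gives no separate proof of Theorem \ref{theoremFPRelation}, presenting it (together with Proposition \ref{prop_global_FP}) as an immediate consequence of Bers' results in \cite{BersFP} combined with the decoupling of Remark \ref{remark_decouple_vekua} --- exactly the reduction-to-$\mathbb{C}_{j}$ dictionary you describe; your induction on $n$ is just an expanded account of what is being cited from \cite{BersFP}.

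There is, however, a genuine gap in your first step, and it is load-bearing. You claim that since $(F_{0},G_{0})=(F,G)$ is $\mathbb{C}_{j}$-valued, ``hence so are all successors $(F_{m},G_{m})$'' and their adjoints. This does not follow. By Definition \ref{DefSuccessor_bi} a successor $(F_{1},G_{1})$ is constrained only through two of its four characteristic coefficients, $a_{(F_{1},G_{1})}=a_{(F,G)}$ and $b_{(F_{1},G_{1})}=-B_{(F,G)}$; the pair itself, and in particular its coefficients $A_{(F_{1},G_{1})}$ and $B_{(F_{1},G_{1})}$, are not determined by $(F,G)$. Those are precisely the data that feed everything downstream: $b_{(F_{2},G_{2})}=-B_{(F_{1},G_{1})}$ for the next member of the sequence, and $b_{(F_{1}^{\ast },G_{1}^{\ast })}=-\overline{B_{(F_{1},G_{1})}}$ for the adjoint (Proposition \ref{cara_coef_(F,G)*}), i.e.\ for the equations satisfied by the formal powers $Z_{m-n}^{\ast (-n)}$ on the right-hand side of the theorem; the same applies to predecessors, which occur for $m-n<0$. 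Concretely, take $(F,G)=(1,j)$, all of whose characteristic coefficients vanish, so that any generating pair of $\mathbb{B}$-analytic functions is a successor. The constant pair $(i,ij)$ is a complete successor which is not $\mathbb{C}_{j}$-valued; worse, the successor $(F_{1},G_{1})=(1,\,j(1+iz))$ (one checks $\partial _{\overline{z}}G_{1}=0$ and $\operatorname{Vec}(\overline{F_{1}}G_{1})=1+ix\neq 0$) has $B_{(F_{1},G_{1})}=-i/\bigl(2(1+ix)\bigr)$, which is not $\mathbb{C}_{j}$-valued, so neither the Vekua equation of the next pair in such a sequence nor the adjoint equation of $(F_{1},G_{1})$ decouples in the sense of Remark \ref{remark_decouple_vekua}, and the reduction to Bers' complex theory collapses. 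The $\mathbb{C}_{j}$-valuedness of \emph{every} pair of the sequence must therefore be taken as a hypothesis rather than derived; this is how the paper reads the statement (see the remark closing Section \ref{Sect Negative Formal Powers}, which speaks of a complete generating sequence of $\mathbb{C}_{j}$-valued generating pairs). Once that assumption is in place, your dictionary and the citation of \cite{BersFP} go through as in the paper.
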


Due to the above theorem for fixed values of $z$ the global formal powers $%
Z_{m}^{(-n)}(j,z_{0},z)$ are continuously differentiable in the variable $%
z_{0}$.

There are currently no results regarding the existence or the uniqueness for
the global formal powers of a general bicomplex Vekua equation though in
Subsection \ref{Subsect First Cauchy} we establish an existence result under
certain additional conditions. We end this section by mentioning that even
for complex Vekua equations the construction of the negative formal powers
(global or not) is a very difficult task. In Section \ref{Sect Applications}
we construct the negative formal powers in the explicit form for some
classes of bicomplex main Vekua equations.

\subsection{A relation between the Schr\"{o}dinger equation and the main
Vekua equation}

Let $q$ be a complex ($C_{i}$-valued) continuous function in $\Omega
\subseteq\mathbb{R}^{2}$. Consider the two-dimensional stationary Schr\"{o}%
dinger equation%
\begin{equation}
\Delta u-qu=0\quad\text{in }\Omega  \label{sch}
\end{equation}
and assume that it possesses a $C_{i}$-valued particular solution $f\in
C^{2}(\Omega)$ such that $f(z)\neq0$ for all $z\in\Omega$. We will need a
result from \cite{Krpseudoan} (see also \cite{APFT}) relating the Schr\"{o}%
dinger equation with a Vekua equation of a special kind.

\begin{theorem}
\label{schandmainvek}Let $W=u+jv$ with $u=\limfunc{Sc}W$, $v=\limfunc{Vec}W$
be a solution of the main bicomplex Vekua equation%
\begin{equation}
\partial_{\overline{z}}W=\frac{\partial_{\overline{z}}f}{f}\overline{W}\quad%
\text{in }\Omega.  \label{mainvekua}
\end{equation}
Then $u$ is a solution of (\ref{sch}) and $v$ is a solution of%
\begin{equation}
\Delta v-q_{1}v=0\quad\text{in }\Omega  \label{sch1}
\end{equation}
where $q_{1}=2\frac{\left( \partial_{x}f\right) ^{2}+\left( \partial
_{y}f\right) ^{2}}{f^{2}}-q.$
\end{theorem}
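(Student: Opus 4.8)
The plan is to reduce the first-order equation (\ref{mainvekua}) to a second-order one for $W$ by applying $\partial_z$ and using the factorization $\Delta=4\partial_z\partial_{\overline{z}}$, which follows at once from (\ref{CR_Op_Bicomp}) since $(\partial_x-j\partial_y)(\partial_x+j\partial_y)=\partial_{xx}+\partial_{yy}$. Abbreviating $c:=\partial_{\overline{z}}f/f$, equation (\ref{mainvekua}) reads $\partial_{\overline{z}}W=c\overline{W}$; applying $\partial_z$ and using $\partial_z\partial_{\overline{z}}W=\tfrac14\Delta W$ gives $\tfrac14\Delta W=(\partial_z c)\overline{W}+c\,\partial_z\overline{W}$.

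The crucial step is to eliminate $\partial_z\overline{W}$ using the equation itself. Because the $j$-conjugation acts only on the components of a bicomplex function it commutes with $\partial_x$ and $\partial_y$, so conjugating $\partial_{\overline{z}}W=\tfrac12(\partial_x+j\partial_y)W$ flips the sign of the $j$-term and produces $\partial_z\overline{W}=\overline{\partial_{\overline{z}}W}$. Since conjugation is multiplicative on the commutative ring $\mathbb{B}$, the equation then yields $\partial_z\overline{W}=\overline{c\overline{W}}=\overline{c}\,W$, and as $f$ is $\mathbb{C}_i$-valued we have $\overline{c}=\partial_z f/f$. Substituting, the mixed second-order term disappears and we are left with the clean identity $\tfrac14\Delta W=(\partial_z c)\overline{W}+c\overline{c}\,W$, a linear combination of $W$ and $\overline{W}$ alone.

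It remains to evaluate the two coefficients. A direct computation gives $c\overline{c}=(\partial_{\overline{z}}f)(\partial_z f)/f^2=(f_x^2+f_y^2)/(4f^2)$, while $\partial_z c=(\partial_z\partial_{\overline{z}}f)/f-(\partial_{\overline{z}}f)(\partial_z f)/f^2$; here the hypothesis $q=\Delta f/f$ enters through $\partial_z\partial_{\overline{z}}f=\tfrac14\Delta f=\tfrac14 qf$, giving $\partial_z c=\tfrac14\bigl(q-(f_x^2+f_y^2)/f^2\bigr)$. Writing $p:=(f_x^2+f_y^2)/f^2$ we obtain $\Delta W=(q-p)\overline{W}+pW$. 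Splitting into scalar and vector parts via $W=u+jv$, $\overline{W}=u-jv$, the $p$-terms cancel in the scalar part and add in the vector part, yielding $\Delta u=qu$ and $\Delta v=(2p-q)v$, which are exactly (\ref{sch}) and (\ref{sch1}) with $q_1=2p-q$.

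The one point demanding care is regularity: applying $\partial_z$ to the equation and reading (\ref{sch}), (\ref{sch1}) as classical equations presupposes $W\in C^2$. This is not immediate from pseudoanalyticity alone, which only furnishes H\"older continuous first derivatives, but it follows by a standard bootstrap from $\partial_{\overline{z}}W=c\overline{W}$: since $f\in C^2(\Omega)$ the coefficient $c$ is $C^1$, so the right-hand side is smooth enough to upgrade the regularity of $W$ and justify the computation. I expect this elliptic-regularity bootstrap, rather than the algebra above, to be the step that needs the most care.
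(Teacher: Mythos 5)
Your derivation is correct, but note that there is nothing in the paper to compare it against: the paper does not prove Theorem \ref{schandmainvek} at all, it imports it from \cite{Krpseudoan} (see also \cite{APFT}) as a known result. What you have done is reconstruct, in a self-contained way, essentially the standard argument that underlies those references, namely the factorization of the Schr\"odinger operator through first-order operators involving the $j$-conjugation: applying $\partial_z$ to $\partial_{\overline{z}}W=c\overline{W}$, eliminating the mixed term via $\partial_z\overline{W}=\overline{\partial_{\overline{z}}W}=\overline{c}\,W$ (valid because $j$-conjugation commutes with $\partial_x,\partial_y$ and is multiplicative on the commutative ring $\mathbb{B}$, with $\overline{c}=\partial_z f/f$ since $f$ is $\mathbb{C}_i$-valued), and then splitting $\Delta W=(q-p)\overline{W}+pW$ into scalar and vector parts. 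The algebra checks out: $c\overline{c}=(f_x^2+f_y^2)/(4f^2)$, $\partial_z c=\tfrac14(q-p)$ using $\partial_z\partial_{\overline{z}}f=\tfrac14\Delta f=\tfrac14 qf$, the $p$-terms cancel in the scalar part and reinforce in the vector part, giving $\Delta u=qu$ and $\Delta v=(2p-q)v=q_1v$ exactly as stated. No zero-divisor issues arise since the only division is by the nonvanishing scalar $f$, which is invertible in $\mathbb{B}$.

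One caveat on the regularity step you flag: as stated, the bootstrap is slightly optimistic. With only $f\in C^2(\Omega)$ the coefficient $c$ is merely $C^1$, and the Vekua-type bootstrap ($W=h+T_{\overline{z}}(c\overline{W})$ with $h$ $\mathbb{B}$-analytic, iterated via Proposition \ref{Tz_prop}) then yields $W\in C^{1,\alpha}$ together with second derivatives in the strong ($W^{2,p}_{loc}$) sense, so (\ref{sch}) and (\ref{sch1}) hold almost everywhere but not obviously classically. To obtain classical $C^2$ solutions one should either assume $f\in C^{2,\alpha}$ --- which is consistent with the paper's standing hypothesis that the Vekua coefficients $a,b$ are H\"older continuous, since then $c\overline{W}\in C^{1,\alpha}$ and $T_{\overline{z}}(c\overline{W})\in C^{2,\alpha}$ --- or read the conclusion in the strong sense. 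This is a finer level of rigor than the paper and its sources engage with, so it does not affect the verdict that your proof is sound.
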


\begin{remark}
\label{Remark_coeficients _main_vekua}A generating pair corresponding to (%
\ref{mainvekua}) can be chosen in the form 
\begin{equation}
(F,G)=(f,\frac{j}{f}).  \label{Gen pair for the main Vekua}
\end{equation}
The corresponding characteristic coefficients are%
\begin{equation*}
a_{(f,\frac{j}{f})}=A_{(f,\frac{j}{f})}=0,\quad b_{(f,\frac{j}{f})}=\frac{%
\partial_{\overline{z}}f}{f},\quad B_{(f,\frac{j}{f})}=\frac {\partial_{z}f}{%
f}
\end{equation*}
and the successor equation of (\ref{mainvekua}) is%
\begin{equation}
\partial_{\overline{z}}W=-\frac{\partial_{z}f}{f}\overline{W}.
\label{Main_vekua_suc}
\end{equation}
\end{remark}

We need the following notation. Let $W$ be a $\mathbb{B}$-valued function
defined on a simply connected domain $\Omega$ with $u=\func{Sc}W$ and $v=%
\func{Vec}W$ such that 
\begin{equation}
\frac{\partial u}{\partial y}-\frac{\partial v}{\partial x}=0,\quad\text{in }%
\Omega  \label{compcond}
\end{equation}
and let $\Gamma\subset\Omega$ be a simple rectifiable curve leading from $%
z_{0}=x_{0}+jy_{0}$ to $z=x+jy$. Then the integral 
\begin{equation*}
\overline{A}W(z):=2\left( \int_{\Gamma}udx+vdy\right)
\end{equation*}
is path-independent, and all $\mathbb{C}_{i}$-valued solutions $\varphi$ of
the equation $\partial\overline{_{z}}\varphi=W$ in $\Omega$ have the form $%
\varphi(z)=\overline{A}W(z)+c$ where $c$ is an arbitrary $\mathbb{C}_{i}$%
-constant.

\begin{remark}
\label{T_f_def}It is easy to check that if $u$ is a solution of the Schr\"{o}%
dinger equation (\ref{sch}) in a simply connected domain $\Omega$ then the $%
\mathbb{B}$-valued function $jf^{2}\partial_{\overline{z}}(f^{-1}u)$
satisfies (\ref{compcond}) and therefore the function constructed by the rule%
\begin{equation}
T_{f}(u):=f^{-1}\overline{A}(jf^{2}\partial_{\overline{z}}(f^{-1}u))
\label{Darbouxttransf}
\end{equation}
is well defined in $\Omega$. Application of $T_{f}$ to a more general class
of functions will be also considered but as in this case the result of such
operation may depend on the choice of the curve $\Gamma$ leading from $z_{0}$
to $z$, the notation $T_{f,\Gamma}(u)$ will be used.
\end{remark}

\begin{theorem}
\label{Theo_Tf}\cite{KrJPhys06} If $u$ is a $\mathbb{C}_{i}$-valued solution
of the Schr\"{o}dinger equation (\ref{sch}) in a simply connected domain $%
\Omega$ then $v=T_{f}(u)$ is a solution of (\ref{sch1}) and $W=u+jv$ is a
solution of (\ref{mainvekua}).
\end{theorem}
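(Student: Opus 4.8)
The plan is to verify directly that $W=u+jv$ with $v=T_f(u)$ solves the main Vekua equation (\ref{mainvekua}), and then to invoke Theorem \ref{schandmainvek} to conclude that $v$ solves (\ref{sch1}). The key observation is that the definition (\ref{Darbouxttransf}) of $T_f$ is engineered precisely so that $v$ satisfies the ``conjugate'' relation forcing $W$ into the kernel of $\partial_{\overline{z}}-\tfrac{\partial_{\overline{z}}f}{f}\,\overline{(\cdot)}$. First I would unpack the main Vekua equation (\ref{mainvekua}) into a pair of real first-order equations in $u$ and $v$, using the decomposition $\partial_{\overline{z}}=\tfrac12(\partial_x+j\partial_y)$ and $b=\partial_{\overline{z}}f/f$ from Remark \ref{Remark_coeficients _main_vekua}. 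Writing out $\partial_{\overline{z}}W-b\overline{W}=0$ and separating scalar and vector parts yields two equations; one of them will turn out to be automatically satisfied because $u$ solves the Schr\"odinger equation (\ref{sch}), and the other will be exactly the defining relation for $v$.

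The heart of the argument is to check that the quantity inside $\overline{A}$ in (\ref{Darbouxttransf}), namely $jf^2\partial_{\overline{z}}(f^{-1}u)$, genuinely satisfies the compatibility condition (\ref{compcond}), so that $\overline{A}$ is path-independent and $v$ is well defined. This is the point asserted in Remark \ref{T_f_def}, so I may take it as given; but conceptually the computation reduces to showing that $\partial_x(\text{second component})=\partial_y(\text{first component})$, and after expanding $\partial_{\overline{z}}(f^{-1}u)$ this collapses to the statement $\Delta u=qu$, i.e.\ exactly (\ref{sch}). Thus the solvability of the equation $\partial_{\overline{z}}\varphi=jf^2\partial_{\overline{z}}(f^{-1}u)$ for a scalar $\varphi$ is equivalent to $u$ being a Schr\"odinger solution. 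Once $v=f^{-1}\varphi$ is in hand, by the very characterization $\partial_{\overline{z}}(fv)=jf^2\partial_{\overline{z}}(f^{-1}u)$, and a short manipulation should reorganize this identity into the main Vekua equation for $W=u+jv$.

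Concretely, I would compute $\partial_{\overline{z}}(fv)$ and $\partial_{\overline{z}}(f^{-1}u)$ side by side, use the product rule to pull out the factor $\partial_{\overline{z}}f$, and recognize that the combination $\partial_{\overline{z}}(u+jv)-\tfrac{\partial_{\overline{z}}f}{f}\overline{(u+jv)}$ vanishes term by term. The algebra here is routine once the identity $\partial_{\overline{z}}(fv)=jf^2\partial_{\overline{z}}(f^{-1}u)$ is available, because $\overline{W}=u-jv$ and the cross terms in $j$ are arranged to cancel. Having established (\ref{mainvekua}) for $W$, Theorem \ref{schandmainvek} immediately gives that $v=\func{Vec}W$ solves (\ref{sch1}) with the stated potential $q_1$, completing the proof.

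The main obstacle I anticipate is not any single hard step but keeping the bookkeeping straight between the two imaginary units: $j$ governs the Vekua/bicomplex structure while the coefficients and $\Delta u=qu$ live in $\mathbb{C}_i$, so one must be careful that $f$ is $\mathbb{C}_i$-valued and commutes past $j$, and that ``$\func{Sc}$'' and ``$\func{Vec}$'' track the $1$- and $j$-components correctly. The genuinely substantive content — that path-independence of $\overline{A}$ is equivalent to $u$ solving (\ref{sch}) — is exactly the verification of (\ref{compcond}) flagged in Remark \ref{T_f_def}, and that is the step where the Schr\"odinger equation is actually used.
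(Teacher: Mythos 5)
Your proposal is correct, but note that the paper itself offers no proof of Theorem \ref{Theo_Tf}: the result is quoted from \cite{KrJPhys06}, so there is no internal argument to compare against, and what you have supplied is precisely the self-contained verification the paper delegates to that reference. Your core chain is sound: by construction $fv=\overline{A}\bigl(jf^{2}\partial_{\overline{z}}(f^{-1}u)\bigr)$ satisfies $\partial_{\overline{z}}(fv)=jf^{2}\partial_{\overline{z}}(f^{-1}u)$; expanding both sides by the product rule gives $f\partial_{\overline{z}}v+v\partial_{\overline{z}}f=jf\partial_{\overline{z}}u-ju\partial_{\overline{z}}f$, and multiplying by $j$ and rearranging yields $f\,\partial_{\overline{z}}(u+jv)=\partial_{\overline{z}}f\,(u-jv)$, which is exactly (\ref{mainvekua}); the well-definedness of $\overline{A}$ rests on (\ref{compcond}), which for $jf^{2}\partial_{\overline{z}}(f^{-1}u)$ reduces to $\operatorname{div}\bigl(f^{2}\nabla(f^{-1}u)\bigr)=f(\Delta u-qu)=0$, i.e.\ exactly (\ref{sch}); and Theorem \ref{schandmainvek} then gives that $v$ solves (\ref{sch1}). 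One imprecision in your opening plan should be flagged: when (\ref{mainvekua}) is split into scalar and vector parts, it is \emph{not} the case that one component equation is automatically satisfied because $u$ solves (\ref{sch}) while the other defines $v$; the two components read $\partial_{y}(fv)=f^{2}\partial_{x}(f^{-1}u)$ and $\partial_{x}(fv)=-f^{2}\partial_{y}(f^{-1}u)$, so they are \emph{jointly} the defining relation for $v$ (equivalent to the single identity $\partial_{\overline{z}}(fv)=jf^{2}\partial_{\overline{z}}(f^{-1}u)$), and (\ref{sch}) enters only as the integrability condition making this over-determined system for $v$ solvable in a simply connected domain. Since your second and third paragraphs carry out the argument through that identity rather than through the mistaken component-splitting picture, the slip is harmless; likewise, the component equations are $\mathbb{C}_{i}$-valued rather than real, a bookkeeping point you yourself correctly raise at the end.
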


\begin{corollary}
\label{conv_sch}Let $u_{n}\in C^{2}(\Omega)$ be a sequence of solutions of (%
\ref{sch}) and $u$ be such that $u_{n}\rightarrow u$, $\partial_{x}u_{n}%
\rightarrow\partial_{x}u$ and $\partial_{y}u_{n}\rightarrow\partial_{y}u$
uniformly in $\Omega$. Then $u$ is a solution of (\ref{sch})$.$
\end{corollary}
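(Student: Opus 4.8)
The plan is to lift the problem to the main Vekua equation (\ref{mainvekua}), where a convergence result is already available, and then project back to the Schr\"{o}dinger equation. The point is that one cannot simply pass to the limit in $\Delta u_{n}-qu_{n}=0$, because the hypotheses control only $u_{n}$ and its \emph{first} derivatives, not the second derivatives entering the Laplacian. The first-order Vekua reformulation circumvents precisely this difficulty, since $\partial_{\overline{z}}$ involves only first-order derivatives and Theorem \ref{theorem_pseudo_conv} needs only uniform convergence of the functions themselves.

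Since being a solution of (\ref{sch}) is a local property, I would fix an arbitrary point of $\Omega$ and a bounded simply connected neighbourhood $U$ of it on which $f$, $f^{-1}$ and $\nabla f$ are bounded. On $U$, for each $n$ set $v_{n}:=T_{f}(u_{n})$ and $W_{n}:=u_{n}+jv_{n}$. By Theorem \ref{Theo_Tf} each $W_{n}$ is a solution of (\ref{mainvekua}), hence a $\mathbb{B}$-pseudoanalytic function on $U$.

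Next I would establish that $(W_{n})$ converges uniformly on $U$. Uniform convergence of the scalar parts $u_{n}$ is assumed, so the only task is the vector parts $v_{n}$. The operator $T_{f}$ defined in (\ref{Darbouxttransf}) is linear, so $v_{n}-v_{m}=T_{f}(u_{n}-u_{m})$, and $u_{n}-u_{m}$ is again a solution of (\ref{sch}), which keeps the defining line integral $\overline{A}$ path-independent. Writing out $jf^{2}\partial_{\overline{z}}(f^{-1}w)$ with $w=u_{n}-u_{m}$ and using $f^{2}\partial_{x}(f^{-1}w)=f\partial_{x}w-(\partial_{x}f)w$ and the analogous identity in $y$, the integrand of $\overline{A}$ becomes a fixed linear combination of $w$, $\partial_{x}w$, $\partial_{y}w$ with coefficients built from $f$ and $\nabla f$. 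By hypothesis these three converge uniformly, so the integrand is uniformly Cauchy on $U$; as $U$ is bounded and $f^{-1}$ is bounded there, the resulting integrals, and hence the $v_{n}$, form a uniformly Cauchy sequence. Thus $v_{n}\rightarrow v$ and $W_{n}\rightarrow W:=u+jv$ uniformly on $U$.

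Finally, Theorem \ref{theorem_pseudo_conv} gives that $W$ is $\mathbb{B}$-pseudoanalytic on $U$, i.e.\ a solution of (\ref{mainvekua}), and then Theorem \ref{schandmainvek} yields that $u=\func{Sc}W$ solves (\ref{sch}) on $U$. Since the point was arbitrary, $u$ is a solution of (\ref{sch}) throughout $\Omega$. The main obstacle is the third step: extracting uniform convergence of $v_{n}$ from that of $u_{n}$ and its first derivatives. This hinges on the observation that $T_{f}$ is a first-order, path-independent integral operator whose integrand sees $u_{n}$ only through $u_{n}$, $\partial_{x}u_{n}$ and $\partial_{y}u_{n}$; localizing to a bounded neighbourhood $U$ is what renders the curve lengths and the factor $f^{-1}$ harmless.
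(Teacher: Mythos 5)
Your proposal is correct and follows essentially the same route as the paper: lift $u_{n}$ to solutions $W_{n}=u_{n}+jT_{f}(u_{n})$ of (\ref{mainvekua}) via Theorem \ref{Theo_Tf}, pass to the uniform limit with Theorem \ref{theorem_pseudo_conv}, and project back with Theorem \ref{schandmainvek}. The only difference is that you spell out (via linearity of $T_{f}$ and the fact that its integrand involves only $u_{n}$, $\partial_{x}u_{n}$, $\partial_{y}u_{n}$, together with localization to a bounded simply connected neighbourhood) the uniform convergence of the vector parts, a step the paper asserts without detail.
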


\begin{proof}
The assumption together with the above theorem allows one to conclude that
the sequence of solutions of (\ref{mainvekua}) given by $%
W_{n}:=u_{n}+jT_{f}(u_{n})$ converges uniformly to $W:=u+jT_{f}(u)$. Then by
Theorem \ref{theorem_pseudo_conv}, $W$ is a solution of (\ref{mainvekua})
and from Theorem \ref{schandmainvek} its scalar part $u$ is a solution of (%
\ref{sch}).
\end{proof}

\section{Cauchy integral formulas for bicomplex pseudoanalytic functions 
\label{Sect Cauchy Int Formulas}}

Consider the bicomplex Vekua equation%
\begin{equation}
\partial _{\overline{z}}W=aW+b\overline{W}  \label{vekua}
\end{equation}%
in a simply connected domain $\Sigma \subset \mathbb{C}_{j}$ where the $%
\mathbb{B}$-valued functions $a$ and $b$ are H\"{o}lder continuous.
Throughout this section $\Omega $ is a bounded domain such that $\overline{%
\Omega }\subset \Sigma $ and regular in the following sense \cite[p. 2]{Bers
Book} $\partial \Omega $ consists of a finite number of piecewise
differentiable simple closed Jordan curves.

\subsection{First Cauchy's integral formula\label{Subsect First Cauchy}}

For complex pseudoanalytic functions the integral representation called here
first Cauchy's integral formula was obtained in \cite{Vekua}. In that
representation the Cauchy kernel corresponding to an adjoint Vekua equation
is used. In the present section we obtain this Cauchy integral formula for $%
\mathbb{B}$-pseudoanalytic functions.

The adjoint equation of (\ref{vekua}) has the form%
\begin{equation}
\partial_{\overline{z}}V=-aV-\overline{b}\overline{V}.  \label{vekuaadjoint}
\end{equation}
In the next two propositions a useful characterization of solutions of (\ref%
{vekuaadjoint}) is obtained.

\begin{proposition}
\label{cauchyintegraltheorem}Let $W$ and $V$ be solutions of (\ref{vekua})
and (\ref{vekuaadjoint}) respectively in $\Omega$ and continuous in $%
\overline {\Omega}$. Then%
\begin{equation}
\limfunc{Vec}\int_{\partial\Omega}W(\tau)V(\tau)d\tau=0.
\label{intcauchyformula}
\end{equation}
\end{proposition}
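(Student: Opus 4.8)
The plan is to convert the boundary integral into an area integral over $\Omega$ by means of the complex form of Green's (Stokes') theorem, and then to exhibit a pointwise cancellation in the integrand that is forced by the two Vekua equations. Writing $d\tau=dx+j\,dy$ and applying the real Green theorem to a function $g$ with continuous partial derivatives up to the boundary, one gets
\begin{equation*}
\int_{\partial\Omega}g\,d\tau=\int_{\Omega}\left(j\partial_{x}g-\partial_{y}g\right)dx\,dy=2j\int_{\Omega}\partial_{\overline{z}}g\,dx\,dy,
\end{equation*}
since $j\left(\partial_{x}g+j\partial_{y}g\right)=2j\partial_{\overline{z}}g$. Thus everything reduces to computing $\partial_{\overline{z}}(WV)$ and taking its vector part.

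Since $\mathbb{B}$ is commutative and $j$ commutes with every bicomplex value, the Leibniz rule $\partial_{\overline{z}}(WV)=(\partial_{\overline{z}}W)V+W\,\partial_{\overline{z}}V$ holds. Substituting $\partial_{\overline{z}}W=aW+b\overline{W}$ and $\partial_{\overline{z}}V=-aV-\overline{b}\,\overline{V}$, the two $aWV$ terms cancel and one is left with
\begin{equation*}
\partial_{\overline{z}}(WV)=b\overline{W}V-\overline{b}\,W\overline{V}.
\end{equation*}
Combining this with the Green formula and using the elementary identity $\func{Vec}(jX)=\func{Sc}X$ for $X\in\mathbb{B}$, the quantity to be controlled becomes
\begin{equation*}
\func{Vec}\int_{\partial\Omega}WV\,d\tau=2\int_{\Omega}\func{Sc}\left(b\overline{W}V-\overline{b}\,W\overline{V}\right)dx\,dy.
\end{equation*}
I would then use that the $j$-conjugation is multiplicative and scalar-part preserving, $\func{Sc}\overline{X}=\func{Sc}X$; hence $\func{Sc}(\overline{b}\,W\overline{V})=\func{Sc}(\overline{\overline{b}\,W\overline{V}})=\func{Sc}(b\overline{W}V)$, so the integrand vanishes identically and the area integral is $0$.

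The only genuine obstacle is regularity at the boundary: $W$ and $V$ are assumed merely continuous on $\overline{\Omega}$, whereas the Green formula above needs continuous derivatives up to $\partial\Omega$. By the regularity result established earlier a $\mathbb{B}$-pseudoanalytic function has H\"{o}lder continuous partial derivatives on every compact subdomain, so the identity $\func{Vec}\int_{\partial\Omega'}WV\,d\tau=0$ holds on every regular subdomain $\Omega'$ with $\overline{\Omega'}\subset\Omega$. I would exhaust $\Omega$ by such subdomains $\Omega_{n}\uparrow\Omega$ whose boundaries approximate $\partial\Omega$, and pass to the limit using the uniform continuity of $WV$ on $\overline{\Omega}$ together with the regularity of $\partial\Omega$ (a finite union of piecewise-differentiable Jordan curves) to identify $\lim_{n}\int_{\partial\Omega_{n}}WV\,d\tau$ with $\int_{\partial\Omega}WV\,d\tau$. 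This limiting step, rather than the algebra, is where the care is required.
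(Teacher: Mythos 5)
Your proof is correct and coincides with the argument the paper itself invokes: the paper's proof is only a citation of Vekua's complex-case proof, which is precisely this computation --- the Leibniz rule, cancellation of the $a$-terms, Green's theorem in the form $\int_{\partial\Omega}g\,d\tau =2j\int_{\Omega}\partial_{\overline{z}}g\,dx\,dy$, and the observation that $b\overline{W}V-\overline{b}\,W\overline{V}$ has vanishing scalar part because it is antisymmetric under $j$-conjugation. Your final exhaustion argument to handle functions merely continuous on $\overline{\Omega}$ is the same standard device required (and left implicit) in the complex case.
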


\begin{proof}
The proof is analogous to the proof given in the complex situation \cite[p.
169]{Vekua}.
\end{proof}

The converse of this statement can be obtained implementing Bers' result on
a generalization of Morera's theorem for solutions of equation (\ref%
{vekuaadjoint}).

\bigskip

\begin{proposition}
\label{propmoreraadjunta}Let $(F,G)$ be a generating pair corresponding to (%
\ref{vekua}) and $V$ be a continuous function in $\Omega$. Then $V$ is a
solution of (\ref{vekuaadjoint}) in $\Omega$ iff the following equalities
hold%
\begin{equation}
\limfunc{Vec}\int_{\Gamma}G(\tau)V(\tau)d\tau=\limfunc{Vec}%
\int_{\Gamma}F(\tau)V(\tau)d\tau=0  \label{moreracondition}
\end{equation}
along every closed path $\Gamma$ situated in a simply connected subdomain of 
$\Omega$.
\end{proposition}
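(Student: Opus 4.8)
The plan is to prove both directions of the equivalence, exploiting the parallel structure with the complex theory already invoked in Proposition \ref{cauchyintegraltheorem}, but taking care of the bicomplex features via the idempotent decomposition $V = P^{+}V^{+} + P^{-}V^{-}$. The forward direction is essentially contained in the previous proposition: if $V$ solves the adjoint equation (\ref{vekuaadjoint}), then since $F$ and $G$ are themselves solutions of (\ref{vekua}) (they are $(F,G)$-pseudoanalytic with vanishing $(F,G)$-derivative), Proposition \ref{cauchyintegraltheorem} applied with $W=F$ and then $W=G$ gives $\limfunc{Vec}\int_{\Gamma}F(\tau)V(\tau)\,d\tau = \limfunc{Vec}\int_{\Gamma}G(\tau)V(\tau)\,d\tau = 0$ along every closed $\Gamma$ lying in a simply connected subdomain of $\Omega$. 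Here one must check that Proposition \ref{cauchyintegraltheorem} is stated for arbitrary solutions on the relevant subdomain (or re-run its short argument locally), since $\Gamma$ need only lie in a simply connected subdomain rather than bound a region contained in $\Omega$.

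The substantive content is the converse: assuming (\ref{moreracondition}) holds, I would show that $V$ solves (\ref{vekuaadjoint}). The natural route is a Morera-type argument. First I would observe that because $(F,G)$ is a generating pair, the conditions $\limfunc{Vec}\int_{\Gamma}FV\,d\tau = \limfunc{Vec}\int_{\Gamma}GV\,d\tau = 0$ are equivalent (after the appropriate algebraic rearrangement involving $F^{\ast}$, $G^{\ast}$) to the statement that a certain $(F,G)$-$\ast$-integral of a related quantity vanishes along every closed path. This should let me invoke Proposition \ref{propMorera}, the Morera/antiderivative machinery already established, to produce a pseudoanalytic primitive and thereby differentiate the integral conditions. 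Concretely, path-independence of $\int_{\Gamma}FV\,d\tau$ and $\int_{\Gamma}GV\,d\tau$ (in the sense that their vector parts vanish on closed loops) yields well-defined scalar potentials locally, and applying $\partial_{\bar z}$ to recover the integrand produces two first-order relations that, when combined and solved using $\func{Vec}(\overline F G)\neq 0$, force $V$ to satisfy exactly $\partial_{\bar z}V = -aV - \overline{b}\,\overline{V}$.

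The main obstacle I anticipate is the algebraic bookkeeping connecting the two adjoint structures: the Cauchy integral theorem (\ref{intcauchyformula}) pairs solutions of (\ref{vekua}) with solutions of the adjoint (\ref{vekuaadjoint}), whereas the Morera conditions in Proposition \ref{propMorera} are phrased through the adjoint generating pair $(F,G)^{\ast}$ and the $\ast$-integral $\ast\int_{\Gamma}W\,d_{(F,G)}z = \limfunc{Sc}\int_{\Gamma}G^{\ast}W\,dz + j\limfunc{Sc}\int_{\Gamma}F^{\ast}W\,dz$. I would need to verify that the $\limfunc{Vec}$-pairing with $F,G$ in (\ref{moreracondition}) corresponds, under conjugation with respect to $j$ and multiplication by the scalar factor $F\overline G - \overline F G = -2j\func{Vec}(\overline F G)$, to the $\limfunc{Sc}$-pairing with $F^{\ast},G^{\ast}$ that appears in the $\ast$-integral. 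Once this dictionary is set up — which reduces to the identities $F^{\ast} = -2\overline F/(F\overline G - \overline F G)$ and $G^{\ast} = 2\overline G/(F\overline G - \overline F G)$ of Definition \ref{DefAdjoint_bi} — the rest follows the complex template of Vekua closely, with the zero-divisor subtleties handled componentwise through $P^{\pm}$ exactly as in the norm and openness arguments of Section \ref{Sect Bicomplex numbers}.
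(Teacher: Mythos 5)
Your forward direction coincides with the paper's (it is the application of Proposition \ref{cauchyintegraltheorem} to the solutions $F$ and $G$), and your instinct to settle the converse with the Morera machinery of Proposition \ref{propMorera} is also the paper's strategy; but the dictionary you propose is the wrong one, and the argument breaks exactly at its crux. You want to identify condition (\ref{moreracondition}) with $(F,G)$-$\ast$-integrability of $V$, i.e.\ with the vanishing of the $\limfunc{Sc}$-pairings of $V$ against $F^{\ast},G^{\ast}$. Two things go wrong. First, algebraically: writing $F\overline{G}-\overline{F}G=-2j\mu$ with $\mu=\limfunc{Vec}(\overline{F}G)$ scalar, one has $F^{\ast}=-j\overline{F}/\mu$ and $G^{\ast}=j\overline{G}/\mu$, hence $\limfunc{Sc}(G^{\ast}V)=-\limfunc{Vec}(\overline{G}V)/\mu$; so the $\limfunc{Sc}$-pairings with $F^{\ast},G^{\ast}$ unpack into $\limfunc{Vec}$-pairings of $V$ against $\overline{F},\overline{G}$, not against $F,G$, and since $V$ itself is not conjugated there is no identity converting one family of conditions into the other. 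Second, and more fundamentally: even if such an identification held, Proposition \ref{propMorera}$(i)$ applied with the pair $(F,G)$ characterizes solutions of the \emph{successor} equation of $(F,G)$, whose coefficients are $a$ and $-B_{(F,G)}$, whereas the adjoint equation (\ref{vekuaadjoint}) has coefficients $-a$ and $-\overline{b}$; these are different equations in general (they coincide only in special situations, e.g.\ for the main Vekua equation). So your route, carried out literally, proves the wrong statement.

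The missing idea, which is the actual content of the paper's proof, is to introduce the auxiliary pair $(jF^{\ast},jG^{\ast})$. By Proposition \ref{cara_coef_(F,G)*}, together with the effect of the constant factor $j$ on the characteristic coefficients, this is a generating pair for the equation $\partial_{\overline{z}}W=-aW+\overline{B_{(F,G)}}\,\overline{W}$, and its successor equation is exactly (\ref{vekuaadjoint}). Moreover $(jF^{\ast},jG^{\ast})^{\ast}=(-jF,-jG)$, so, using $\limfunc{Sc}(-jX)=\limfunc{Vec}(X)$, the $(jF^{\ast},jG^{\ast})$-integrability of $V$ is verbatim condition (\ref{moreracondition}); Proposition \ref{propMorera}$(i)$ then yields the equivalence in one stroke. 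Note also that your fallback plan (build potentials from path-independence and ``apply $\partial_{\overline{z}}$ to recover the integrand'') cannot be made rigorous as stated: $V$ is only assumed continuous, so the potentials are $C^{1}$ functions with prescribed first partial derivatives, but extracting the first-order relations you want requires differentiating $FV$ and $GV$, which is unavailable. This is precisely why Bers' Morera-type theorem encapsulated in Proposition \ref{propMorera}$(i)$ is nontrivial, and why the correct choice of generating pair, rather than direct differentiation, is what the proof needs.
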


\begin{proof}
If $V$ is a solution of (\ref{vekuaadjoint}) then (\ref{moreracondition}) is
a consequence of the previous proposition. To prove the statement in the
opposite direction notice that if $(F,G)$ is a generating pair corresponding
to (\ref{vekua}) then from Proposition \ref{cara_coef_(F,G)*} one has that $%
(jF^{\ast },jG^{\ast })$ is a generating pair corresponding to the equation%
\begin{equation*}
W_{\overline{z}}=-aW+\overline{B_{(F,G)}}\overline{W},
\end{equation*}%
and its successor is equation (\ref{vekuaadjoint}). This, together with
Proposition \ref{propMorera} implies that the solutions of (\ref%
{vekuaadjoint}) are $(jF^{\ast },jG^{\ast })$-integrable functions. Notice
that $(jF^{\ast },jG^{\ast })^{\ast }=(-jF,-jG)$ and then equality (\ref%
{moreracondition}) tells us that $V$ is $(jF^{\ast },jG^{\ast })$%
-integrable. Thus, $V$ is a solution of (\ref{vekuaadjoint}).
\end{proof}

\begin{proposition}[First Cauchy's integral formula]
Let $W$ be a solution of (\ref{vekua}) in $\Omega$ continuous in $\overline{%
\Omega}$ and $\widehat{Z}^{(-1)}(\alpha,z_{0},z)$ be a Cauchy kernel of (\ref%
{vekuaadjoint}) in $\Sigma$. Then,%
\begin{equation*}
\limfunc{Vec}\int_{\partial\Omega}W(\tau)\widehat{Z}^{(-1)}(1,z_{0},\tau)d%
\tau-j\limfunc{Vec}\int_{\partial\Omega}W(\tau)\widehat{Z}%
^{(-1)}(j,z_{0},\tau)d\tau
\end{equation*}%
\begin{equation*}
=\left\{ 
\begin{array}{c}
2\pi W(z_{0}), \\ 
0,%
\end{array}%
\begin{array}{c}
\text{if }z_{0}\in\Omega \\ 
\text{if }z_{0}\in\Sigma\backslash\left\{ \overline{\Omega}\right\}%
\end{array}
\right. .
\end{equation*}
\end{proposition}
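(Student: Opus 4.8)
The plan is to reduce everything to the bicomplex Cauchy integral theorem already proved in Proposition~\ref{cauchyintegraltheorem} and to a residue-type computation on a small circle, treating the two alternatives separately. When $z_{0}\in\Sigma\backslash\overline{\Omega}$ the singularity of the kernel lies outside $\overline{\Omega}$, so for both $\alpha=1$ and $\alpha=j$ the function $\widehat{Z}^{(-1)}(\alpha,z_{0},\cdot)$ is a genuine solution of the adjoint equation (\ref{vekuaadjoint}) on all of $\Omega$ and is continuous on $\overline{\Omega}$. Applying Proposition~\ref{cauchyintegraltheorem} once with $V=\widehat{Z}^{(-1)}(1,z_{0},\cdot)$ and once with $V=\widehat{Z}^{(-1)}(j,z_{0},\cdot)$ shows that each of the two boundary integrals has vanishing vector part; hence the whole left-hand side is $0$, which settles the second case.

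For $z_{0}\in\Omega$ I would excise a small disk $D_{\varepsilon}=\{z:\left\vert z-z_{0}\right\vert <\varepsilon\}$ with $\overline{D_{\varepsilon}}\subset\Omega$ and work on the regular domain $\Omega_{\varepsilon}=\Omega\backslash\overline{D_{\varepsilon}}$, on which $\widehat{Z}^{(-1)}(\alpha,z_{0},\cdot)$ again solves (\ref{vekuaadjoint}). Proposition~\ref{cauchyintegraltheorem} applied on $\Omega_{\varepsilon}$ yields $\limfunc{Vec}\int_{\partial\Omega_{\varepsilon}}W\widehat{Z}^{(-1)}(\alpha,z_{0},\tau)\,d\tau=0$, and taking the orientation of the inner boundary into account this reduces the integral over $\partial\Omega$ to the one over the circle $\partial D_{\varepsilon}$:
\begin{equation*}
\limfunc{Vec}\int_{\partial\Omega}W\widehat{Z}^{(-1)}(\alpha,z_{0},\tau)\,d\tau=\limfunc{Vec}\int_{\partial D_{\varepsilon}}W\widehat{Z}^{(-1)}(\alpha,z_{0},\tau)\,d\tau,
\end{equation*}
the right-hand side being independent of $\varepsilon$.

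The heart of the argument is the evaluation of the circle integral as $\varepsilon\rightarrow0$. Parametrizing $\partial D_{\varepsilon}$ by $\tau=z_{0}+\varepsilon e^{j\theta}$, $\theta\in\lbrack0,2\pi]$, one gets the $\mathbb{C}_{j}$-version of the classical residue $\int_{\partial D_{\varepsilon}}d\tau/(\tau-z_{0})=2\pi j$. Substituting the asymptotic expansion $\widehat{Z}^{(-1)}(\alpha,z_{0},\tau)=\alpha/(\tau-z_{0})+\mathcal{O}(\log\left\vert \tau-z_{0}\right\vert )$ from Proposition~\ref{C_K_assimpt} and using the continuity of $W$ at $z_{0}$, the logarithmic remainder contributes $\mathcal{O}(\varepsilon\log(1/\varepsilon))\rightarrow0$ (the circle has length $2\pi\varepsilon$) and the replacement of $W(\tau)$ by $W(z_{0})$ introduces only a further vanishing error, so that
\begin{equation*}
\lim_{\varepsilon\rightarrow0}\int_{\partial D_{\varepsilon}}W(\tau)\widehat{Z}^{(-1)}(\alpha,z_{0},\tau)\,d\tau=2\pi j\,\alpha\,W(z_{0}).
\end{equation*}

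Finally I would assemble the two pieces using the elementary bicomplex identities $\limfunc{Vec}(jW)=\limfunc{Sc}W$ and $\limfunc{Vec}(j^{2}W)=-\limfunc{Vec}W$. For $\alpha=1$ the formula above gives $2\pi\limfunc{Vec}(jW(z_{0}))=2\pi\limfunc{Sc}W(z_{0})$, while for $\alpha=j$, after applying the outer factor $-j$, one obtains $-j\cdot2\pi\limfunc{Vec}(j^{2}W(z_{0}))=2\pi j\limfunc{Vec}W(z_{0})$. Their sum is exactly $2\pi(\limfunc{Sc}W(z_{0})+j\limfunc{Vec}W(z_{0}))=2\pi W(z_{0})$. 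The delicate points are the residue evaluation on $\partial D_{\varepsilon}$, where the error control for the $\mathcal{O}(\log)$ term and the continuity argument must be made precise, together with the careful bookkeeping of the bicomplex arithmetic; indeed it is precisely the combination of the two kernels with the factor $-j$ that reconstitutes the scalar and vector parts of $W(z_{0})$.
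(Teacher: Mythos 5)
Your proposal is correct and follows essentially the same route as the paper's proof: the exterior case via Proposition \ref{cauchyintegraltheorem}, and the interior case by excising a small disk $D_{\varepsilon}(z_{0})$, applying the Cauchy integral theorem on $\Omega\backslash\overline{D_{\varepsilon}}$ to reduce to the circle, and evaluating the limit with the asymptotic formula (\ref{C-K-difference}), then extracting scalar and vector parts of $W(z_{0})$ from the two kernels. Your treatment is in fact slightly more explicit than the paper's (error control for the logarithmic term and the final bicomplex bookkeeping, which the paper leaves as ``proved in the same way''), but it is the same argument.
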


\begin{proof}
When $z_{0}\in \Sigma \backslash \left\{ \overline{\Omega }\right\} $ the
equality follows from Proposition \ref{cauchyintegraltheorem}. For $z_{0}\in
\Omega $ let us prove the scalar part of the equality, 
\begin{equation*}
\limfunc{Vec}\int_{\partial \Omega }W(\tau )\widehat{Z}^{(-1)}(1,z_{0},\tau
)d\tau =2\pi \limfunc{Sc}W(z_{0}).
\end{equation*}%
Let $D_{\varepsilon }(z_{0})$ be a disk with the center $z_{0}$ and radius $%
\varepsilon $ and $\Omega _{\varepsilon }=\Omega \backslash \left\{
D_{\varepsilon }(z_{0})\right\} $. Then for a sufficiently small $%
\varepsilon $ from Proposition \ref{cauchyintegraltheorem} we obtain%
\begin{equation*}
\limfunc{Vec}\int_{\partial \Omega _{\varepsilon }}W(\tau )\widehat{Z}%
^{(-1)}(1,z_{0},\tau )d\tau =0,
\end{equation*}%
that is%
\begin{equation*}
\limfunc{Vec}\int_{\partial \Omega }W(\tau )\widehat{Z}^{(-1)}(1,z_{0},\tau
)d\tau =\limfunc{Vec}\int_{\partial D_{\varepsilon }}W(\tau )\widehat{Z}%
^{(-1)}(1,z_{0},\tau )d\tau .
\end{equation*}%
By (\ref{C-K-difference}) we have that%
\begin{equation*}
\underset{\varepsilon \rightarrow 0}{\lim }\int_{\partial D_{\varepsilon
}}W(\tau )\widehat{Z}^{(-1)}(1,z_{0},\tau )d\tau =\underset{\varepsilon
\rightarrow 0}{\lim }\int_{\partial D_{\varepsilon }}\frac{W(\tau )}{\tau
-z_{0}}d\tau =2\pi jW(z_{0}).
\end{equation*}%
Thus,%
\begin{equation*}
\limfunc{Vec}\int_{\partial \Omega }W(\tau )\widehat{Z}^{(-1)}(1,z_{0},\tau
)d\tau =\limfunc{Vec}(2\pi jW(z_{0}))=2\pi \limfunc{Sc}W(z_{0}).
\end{equation*}%
The vector part of the Cauchy integral formula is proved in the same way.
\end{proof}

\begin{remark}
As the adjoint of equation of (\ref{vekuaadjoint}) is (\ref{vekua}) a
similar Cauchy's integral formula holds for solutions of (\ref{vekuaadjoint}%
) in terms of Cauchy kernels corresponding to (\ref{vekua}).
\end{remark}

Following \cite[p. 174]{Vekua} we use the first Cauchy integral formula to
obtain relations between Cauchy kernels corresponding to mutually adjoint
bicomplex Vekua equations.

\begin{proposition}
\label{propkernels}Let $\Sigma=\mathbb{C}_{j}$ and suppose that $%
Z^{(-1)}(\alpha,z_{0},z)$, $\widehat{Z}^{(-1)}(\alpha,z_{0},z)$ are Cauchy
kernels in $\mathbb{C}_{j}$ corresponding to (\ref{vekua}) and (\ref%
{vekuaadjoint}) respectively, both behaving like $\mathcal{O}(\left\vert
z\right\vert ^{-1})$ as $z\rightarrow\infty$. Then%
\begin{equation}
\widehat{Z}^{(-1)}(1,z_{0},z)=-\limfunc{Sc}Z^{(-1)}(1,z,z_{0})+j\limfunc{Sc}%
Z^{(-1)}(j,z,z_{0})  \label{rela1}
\end{equation}
and%
\begin{equation}
\widehat{Z}^{(-1)}(j,z_{0},z)=\limfunc{Vec}Z^{(-1)}(1,z,z_{0})-j\limfunc{Vec}%
Z^{(-1)}(j,z,z_{0}).  \label{rela2}
\end{equation}
\end{proposition}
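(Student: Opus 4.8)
The plan is to follow Vekua's argument on p.~174, but to replace the generalized Cauchy formula by its cleaner ``zero residue'' form, namely Proposition~\ref{cauchyintegraltheorem}. Fix $z_{0}\neq z_{1}$ in $\mathbb{C}_{j}$ and two coefficients $\alpha,\beta\in\{1,j\}$, and consider the two functions
\[
W(\tau)=Z^{(-1)}(\alpha,z_{0},\tau),\qquad V(\tau)=\widehat{Z}^{(-1)}(\beta,z_{1},\tau),
\]
which solve (\ref{vekua}) and (\ref{vekuaadjoint}) respectively away from their poles $z_{0}$ and $z_{1}$. On the annular domain $\Omega_{R,\varepsilon,\delta}=D_{R}\setminus(\overline{D_{\varepsilon}(z_{0})}\cup\overline{D_{\delta}(z_{1})})$, with $R$ large and $\varepsilon,\delta$ small, both $W$ and $V$ are genuine solutions, continuous up to the boundary, so Proposition~\ref{cauchyintegraltheorem} gives $\operatorname{Vec}\int_{\partial\Omega_{R,\varepsilon,\delta}}WV\,d\tau=0$. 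Splitting the boundary into $\partial D_{R}$ (positively oriented) and the two inner circles $\partial D_{\varepsilon}(z_{0})$, $\partial D_{\delta}(z_{1})$ (negatively oriented) turns this into a relation among three contour integrals.

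Next I would evaluate each piece in the limit. On $\partial D_{R}$ both kernels are $\mathcal{O}(|z|^{-1})$ by hypothesis, so $WV=\mathcal{O}(|z|^{-2})$ and the integral is $\mathcal{O}(R^{-1})\to0$ as $R\to\infty$. For the inner circles I would use the asymptotics (\ref{C-K-difference}): near $z_{0}$ one has $W(\tau)=\alpha/(\tau-z_{0})+\mathcal{O}(\log|\tau-z_{0}|)$ while $V$ is continuous, so the logarithmic remainder contributes $\mathcal{O}(\varepsilon\log\varepsilon)\to0$ and, using $\oint d\tau/(\tau-z_{0})=2\pi j$, the circle $\partial D_{\varepsilon}(z_{0})$ yields $2\pi j\,\alpha\,\widehat{Z}^{(-1)}(\beta,z_{1},z_{0})$; symmetrically $\partial D_{\delta}(z_{1})$ yields $2\pi j\,\beta\,Z^{(-1)}(\alpha,z_{0},z_{1})$ (commutativity of $\mathbb{B}$ makes the ordering irrelevant). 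Collecting these and dividing by $2\pi$, I obtain the master identity
\[
\operatorname{Vec}\bigl[j\alpha\,\widehat{Z}^{(-1)}(\beta,z_{1},z_{0})+j\beta\,Z^{(-1)}(\alpha,z_{0},z_{1})\bigr]=0,
\]
which, since $\operatorname{Vec}(jX)=\operatorname{Sc}(X)$ for every $X\in\mathbb{B}$, is equivalent to
\[
\operatorname{Sc}\bigl[\alpha\,\widehat{Z}^{(-1)}(\beta,z_{1},z_{0})+\beta\,Z^{(-1)}(\alpha,z_{0},z_{1})\bigr]=0.
\]

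Finally I would specialize the master identity to the four choices $(\alpha,\beta)\in\{1,j\}^{2}$ and read off scalar and vector parts using $\operatorname{Sc}(jX)=-\operatorname{Vec}(X)$. Taking $(\alpha,\beta)=(1,1)$ and $(j,1)$ gives $\operatorname{Sc}\widehat{Z}^{(-1)}(1,z_{1},z_{0})=-\operatorname{Sc}Z^{(-1)}(1,z_{0},z_{1})$ and $\operatorname{Vec}\widehat{Z}^{(-1)}(1,z_{1},z_{0})=\operatorname{Sc}Z^{(-1)}(j,z_{0},z_{1})$, which assemble into (\ref{rela1}) after renaming $z_{1}\to z_{0}$, $z_{0}\to z$; taking $(\alpha,\beta)=(1,j)$ and $(j,j)$ gives the scalar and vector parts of $\widehat{Z}^{(-1)}(j,z_{1},z_{0})$ and hence (\ref{rela2}). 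I expect the main obstacle to be the careful justification of the two limiting residue computations --- isolating the principal part $\alpha/(\tau-z_{0})$ via (\ref{C-K-difference}) and controlling the logarithmic remainder uniformly on shrinking circles --- together with the bookkeeping that keeps the $\operatorname{Sc}$/$\operatorname{Vec}$ and multiplication-by-$j$ operations straight; the vanishing at infinity, by contrast, is immediate from the $\mathcal{O}(|z|^{-1})$ decay.
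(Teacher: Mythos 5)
Your proof is correct, but it takes a genuinely different route from the paper's. The paper never returns to the bilinear orthogonality relation of Proposition \ref{cauchyintegraltheorem}; instead it invokes the already-established first Cauchy integral formula twice: once to represent $2\pi \widehat{Z}^{(-1)}(1,z_{0},z)$ as boundary integrals over the annulus $D_{1/\varepsilon }(z_{0})\backslash D_{\varepsilon }(z_{0})$ with the kernels $Z^{(-1)}(\alpha ,z,\cdot )$ (the roles of (\ref{vekua}) and (\ref{vekuaadjoint}) being interchanged, which is legitimate since the adjoint of the adjoint is the original equation), then letting the outer-circle integrals vanish via the $\mathcal{O}(\left\vert z\right\vert ^{-1})$ decay, and finally evaluating the remaining small-circle integrals by a second application of the same formula (its scalar-part identity) to the solutions $Z^{(-1)}(\alpha ,z,\cdot )$, which are regular near $z_{0}$. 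You instead excise disks around \emph{both} poles $z_{0}$ and $z_{1}$, apply Proposition \ref{cauchyintegraltheorem} on the resulting triply connected domain, and compute both residues directly from the asymptotics (\ref{C-K-difference}), arriving at the symmetric master identity
\begin{equation*}
\limfunc{Sc}\bigl[\alpha \,\widehat{Z}^{(-1)}(\beta ,z_{1},z_{0})+\beta
\,Z^{(-1)}(\alpha ,z_{0},z_{1})\bigr]=0,\qquad \alpha ,\beta \in \{1,j\},
\end{equation*}
whose four specializations do yield (\ref{rela1}) and (\ref{rela2}); your $\limfunc{Sc}$/$\limfunc{Vec}$ bookkeeping checks out. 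Since the first Cauchy integral formula was itself proved from Proposition \ref{cauchyintegraltheorem} plus the same residue computation, your argument essentially inlines that proof: what you lose in brevity you gain in symmetry and self-containedness --- both kernels are treated on an equal footing, a single identity encodes all four scalar relations, and no appeal to the remark on the adjoint of the adjoint is needed. One point you use tacitly and should state explicitly: the asymptotics (\ref{C-K-difference}) of Proposition \ref{C_K_assimpt} apply to $\widehat{Z}^{(-1)}(\beta ,z_{1},\cdot )$ as well, because (\ref{vekuaadjoint}) is itself a bicomplex Vekua equation with H\"{o}lder continuous coefficients $-a$, $-\overline{b}$; the paper relies on the same fact inside the proof of the first Cauchy integral formula, so this is a gap in exposition only, not in substance.
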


\begin{proof}
Let $z_{0},z\in \mathbb{C}_{j}$, $z_{0}\neq z$ and $\varepsilon >0$ be
sufficiently small such that $z\in \Omega _{\varepsilon }=D_{\frac{1}{%
\varepsilon }}(z_{0})\backslash D_{\varepsilon }(z_{0})$ where $%
D_{\varepsilon }(z_{0})$ is a disc with the center $z_{0}$ and radius $%
\varepsilon $. Then from the first Cauchy integral formula we have%
\begin{align*}
& 2\pi \widehat{Z}^{(-1)}(1,z_{0},z) \\
& =\limfunc{Vec}\int_{\partial \Omega _{\varepsilon }}\widehat{Z}%
^{(-1)}(1,z_{0},\tau )Z^{(-1)}(1,z,\tau )d\tau -j\limfunc{Vec}\int_{\partial
\Omega \varepsilon }\widehat{Z}^{(-1)}(1,z_{0},\tau )Z^{(-1)}(j,z,\tau )d\tau
\end{align*}%
or equivalently,%
\begin{align*}
& 2\pi \widehat{Z}^{(-1)}(1,z_{0},z) \\
& =\limfunc{Vec}\int_{\partial D_{\frac{1}{\varepsilon }}}\widehat{Z}%
^{(-1)}(1,z_{0},\tau )Z^{(-1)}(1,z,\tau )d\tau -j\limfunc{Vec}\int_{\partial
D_{\frac{1}{\varepsilon }}}\widehat{Z}^{(-1)}(1,z_{0},\tau
)Z^{(-1)}(j,z,\tau )d\tau \\
& -\limfunc{Vec}\int_{\partial D_{\varepsilon }}\widehat{Z}%
^{(-1)}(1,z_{0},\tau )Z^{(-1)}(1,z,\tau )d\tau +j\limfunc{Vec}\int_{\partial
D_{\varepsilon }}\widehat{Z}^{(-1)}(1,z_{0},\tau )Z^{(-1)}(j,z,\tau )d\tau .
\end{align*}%
Taking into account the asymptotic behaviour of the kernels at infinity it
is easy to verify that the first and the second integrals on the right-hand
side of the equality tends to zero when $\varepsilon \rightarrow 0$. Then (%
\ref{rela1}) is obtained from the last equality applying the first Cauchy
integral formula to the last two integrals and considering $\varepsilon
\rightarrow 0$. A similar reasoning proves (\ref{rela2}).
\end{proof}

\begin{remark}
Under the assumptions of Proposition \ref{prop_global_FP} (which in
particular imply that $a$ and $b$ are $C_{j}$-valued functions) equations (%
\ref{vekua}) and (\ref{vekuaadjoint}) possess unique Cauchy kernels (called
global Cauchy kernels) satisfying the conditions of Proposition \ref%
{propkernels} and therefore they can be computed one from another by means
of (\ref{rela1}) and (\ref{rela2}). For a general bicomplex Vekua equation
we were not able to obtain comparable definitive results regarding the
existence or uniqueness of the kernels possessing such properties. However
from the previous proposition we immediately obtain the following uniqueness
result.
\end{remark}

\begin{corollary}
If both equations (\ref{vekua}) and (\ref{vekuaadjoint}) possess Cauchy
kernels fulfilling the assumptions of Proposition \ref{propkernels} then
they are unique.
\end{corollary}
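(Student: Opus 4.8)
The plan is to deduce this uniqueness statement directly from Proposition \ref{propkernels} by exploiting the fact that the relations \eqref{rela1} and \eqref{rela2} express each kernel of \eqref{vekuaadjoint} entirely in terms of the kernels of \eqref{vekua}, and symmetrically (by the remark that the adjoint of \eqref{vekuaadjoint} is \eqref{vekua}) each kernel of \eqref{vekua} in terms of those of \eqref{vekuaadjoint}. The key observation is that the problem reduces to showing that \eqref{vekua} \emph{alone} cannot possess two distinct Cauchy kernels satisfying the decay hypothesis; the corresponding statement for \eqref{vekuaadjoint} then follows by the inversion formulas, or equally well by applying the same argument to the adjoint equation directly.

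First I would suppose, toward a contradiction, that \eqref{vekua} admits two Cauchy kernels $Z_1^{(-1)}(\alpha,z_0,z)$ and $Z_2^{(-1)}(\alpha,z_0,z)$ (for $\alpha=1$ and $\alpha=j$), each a solution of \eqref{vekua} in $\mathbb{C}_j\backslash\{z_0\}$ with the prescribed singularity $\alpha/(z-z_0)+\mathcal{O}(\log|z-z_0|)$ near $z_0$ and each behaving like $\mathcal{O}(|z|^{-1})$ at infinity. Consider the difference $D(\alpha,z_0,z):=Z_1^{(-1)}(\alpha,z_0,z)-Z_2^{(-1)}(\alpha,z_0,z)$. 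Since the two kernels share the \emph{same} principal singularity at $z_0$ (both asymptotic to $\alpha/(z-z_0)$ by \eqref{C-K-difference}), the difference $D$ has at worst a logarithmic singularity at $z_0$; being a difference of solutions of the linear equation \eqref{vekua}, it is itself $\mathbb{B}$-pseudoanalytic on $\mathbb{C}_j\backslash\{z_0\}$. The logarithmic bound makes $D$ bounded in a way that allows the singularity at $z_0$ to be removed, so $D$ extends to a globally $\mathbb{B}$-pseudoanalytic solution of \eqref{vekua} on all of $\mathbb{C}_j$ which, moreover, decays like $\mathcal{O}(|z|^{-1})$ at infinity.

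The heart of the argument is then a Liouville-type conclusion: a global $\mathbb{B}$-pseudoanalytic solution of \eqref{vekua} vanishing at infinity must be identically zero. Under the standing hypotheses of Proposition \ref{prop_global_FP} the coefficients $a$ and $b$ are $\mathbb{C}_j$-valued, so by Remark \ref{remark_decouple_vekua} the equation decouples into a pair of independent complex Vekua equations, to which the classical theory of Bers and Vekua applies; the complex Liouville theorem for pseudoanalytic functions (a solution in $\mathbb{C}$ that is $\mathcal{O}(|z|^{-1})$ at infinity is trivial) then forces each component, and hence $D$, to vanish. This gives $Z_1^{(-1)}\equiv Z_2^{(-1)}$. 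I expect the main obstacle to be the careful justification of the removable-singularity step—showing that the $\mathcal{O}(\log|z-z_0|)$ control on $D$ together with the integral-equation characterization of Proposition \ref{prop_int_eq} genuinely yields a pseudoanalytic extension across $z_0$—since this is where the bicomplex setting and the presence of zero divisors require one to verify that the associated $\mathbb{B}$-analytic function $h=D-T_{\overline z}(aD+b\overline D)$ has a removable singularity, appealing to the boundedness of $D/\!\log$ and standard removability for complex analytic functions applied componentwise. Once $D\equiv 0$ for \eqref{vekua}, the uniqueness for \eqref{vekuaadjoint} is immediate from \eqref{rela1}–\eqref{rela2}, completing the proof.
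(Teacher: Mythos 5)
Your proposal has a genuine gap, and it sits exactly where the paper warns it must: the Liouville-type step. The corollary is stated for the general bicomplex equation (\ref{vekua}) of Section \ref{Sect Cauchy Int Formulas}, where $a$ and $b$ are arbitrary H\"{o}lder continuous $\mathbb{B}$-valued functions. In that setting the decoupling of Remark \ref{remark_decouple_vekua} is unavailable (it requires $a$ and $b$ to be $\mathbb{C}_{j}$-valued), the similarity principle (Theorem \ref{Similarity. P}) applies only to functions $W$ with $W^{-1}\overline{W}$ bounded, i.e.\ whose values avoid zero divisors --- a property you cannot guarantee for the difference $D$ of two kernels --- and, as the paper stresses from the abstract onward, no Liouville theorem is known, precisely because of the zero divisors. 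You sidestep this by invoking ``the standing hypotheses of Proposition \ref{prop_global_FP}'', but those hypotheses ($\mathbb{C}_{j}$-valued coefficients, complete generating pair) are not part of the corollary; importing them collapses your argument to the classical complex case, where existence \emph{and} uniqueness of global kernels are already Bers' results and the corollary adds nothing. Nor does the idempotent decomposition rescue the step in the general case: for $\mathbb{B}$-valued $b$ the equations for $W^{+}$ and $W^{-}$ are coupled through the conjugation term, so there is no ``componentwise'' reduction to classical complex Vekua theory on which a Liouville argument could ride. (The removability step also needs repair --- Proposition \ref{prop_int_eq} is stated for bounded $W$ and $D$ is only $\mathcal{O}(\log|z-z_{0}|)$ --- but that is patchable; the Liouville step is not, with the tools available.)

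A structural symptom of the problem is that your proof never uses the hypothesis that \emph{both} equations possess kernels with the prescribed decay, yet that hypothesis is what makes the paper's proof immediate. The intended argument is purely algebraic: relations (\ref{rela1}) and (\ref{rela2}) express $\widehat{Z}^{(-1)}$ pointwise in terms of $Z^{(-1)}$, so given one kernel $Z^{(-1)}$ of (\ref{vekua}), any two kernels of (\ref{vekuaadjoint}) satisfying the assumptions equal the same right-hand sides and hence coincide. Conversely, if $Z_{1}^{(-1)}$ and $Z_{2}^{(-1)}$ are two kernels of (\ref{vekua}), apply Proposition \ref{propkernels} to the pairs $(Z_{1}^{(-1)},\widehat{Z}^{(-1)})$ and $(Z_{2}^{(-1)},\widehat{Z}^{(-1)})$ with the \emph{same} adjoint kernel: the left-hand sides of (\ref{rela1}), (\ref{rela2}) agree, and comparing scalar and vector parts yields $\limfunc{Sc}Z_{1}^{(-1)}(\alpha ,z,z_{0})=\limfunc{Sc}Z_{2}^{(-1)}(\alpha ,z,z_{0})$ and $\limfunc{Vec}Z_{1}^{(-1)}(\alpha ,z,z_{0})=\limfunc{Vec}Z_{2}^{(-1)}(\alpha ,z,z_{0})$ for $\alpha =1,j$, hence $Z_{1}^{(-1)}\equiv Z_{2}^{(-1)}$. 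The adjoint kernel is the anchor that pins the kernel of (\ref{vekua}) down; no removable-singularity analysis and no Liouville theorem enter at all. To salvage your route you would first have to prove a Liouville theorem for bicomplex Vekua equations with $\mathbb{B}$-valued coefficients, which is an open problem, not a citable fact.
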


\subsection{Second Cauchy's integral formula and reproducing Cauchy kernels}

\begin{definition}
Let $\Omega\subseteq\Sigma$ be a regular domain, $W$ be a continuous
function up to $\overline{\Omega}$ and $Z^{(n)}(\alpha,z_{0},z)$ be a formal
power corresponding to (\ref{vekua}) in $\Sigma$, continuous in the variable 
$z_{0}$ for fixed values of $z$. Following \cite{Bers Book} we define%
\begin{equation*}
\int_{\partial\Omega}Z^{(n)}(jW(\tau)d\tau,\tau,z_{0})=%
\int_{a}^{b}Z^{(n)}(jW(\gamma(t))\gamma^{\prime}(t)dt,\gamma(t),z_{0})
\end{equation*}
where $\gamma(t):\left[ a,b\right] \rightarrow\mathbb{C}_{j}$ is a
parametrization of $\partial\Omega$.
\end{definition}

\begin{theorem}[Second Cauchy's integral formula]
\cite{Vekua}\label{SecondCIF} Let $Z^{(-1)}(\alpha ,z_{0},z)$, $\widehat{Z}%
^{(-1)}(\alpha ,z_{0},z)$ be Cauchy kernels corresponding to (\ref{vekua})
and (\ref{vekuaadjoint}) respectively satisfying (\ref{rela1}), (\ref{rela2}%
) in $\Sigma $ and $W$ be a solution of (\ref{vekua}) in $\Omega $
continuous in $\overline{\Omega }$. Then%
\begin{equation}
\int_{\partial \Omega }Z^{(-1)}(jW(\tau )d\tau ,\tau ,z_{0})=2\pi W(z_{0}),
\label{reproductor}
\end{equation}%
for $z_{0}\in \Omega $ and%
\begin{equation}
\int_{\partial \Omega }Z^{(-1)}(jW(\tau )d\tau ,\tau ,z_{0})=0
\label{reproductor1}
\end{equation}%
for $z_{0}\in \Sigma \backslash \left\{ \overline{\Omega }\right\} $.
\end{theorem}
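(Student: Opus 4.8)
The plan is to reduce the second Cauchy integral formula to the already-established first Cauchy integral formula by unpacking the definition of the formal-power-weighted integral and using the relations (\ref{rela1}), (\ref{rela2}) between the two Cauchy kernels. Recall that the notation
\begin{equation*}
\int_{\partial\Omega}Z^{(-1)}(jW(\tau)d\tau,\tau,z_{0})
\end{equation*}
means that the kernel is evaluated with a bicomplex coefficient $\alpha=jW(\tau)d\tau$, center at the integration variable $\tau$, and argument $z_{0}$. First I would apply the definition (\ref{FP_coefalpha}) of a formal power with an arbitrary coefficient to rewrite the integrand: with $\alpha=jW(\tau)\,d\tau$ one has $\limfunc{Sc}\alpha$ and $\limfunc{Vec}\alpha$ expressed in terms of the scalar and vector parts of $jW(\tau)$, so that
\begin{equation*}
Z^{(-1)}(jW(\tau)d\tau,\tau,z_{0})=\limfunc{Sc}(jW d\tau)\,Z^{(-1)}(1,\tau,z_{0})+\limfunc{Vec}(jW d\tau)\,Z^{(-1)}(j,\tau,z_{0}).
\end{equation*}

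The key step is then to recognize the combination $-\limfunc{Sc}Z^{(-1)}(1,\tau,z_{0})+j\limfunc{Sc}Z^{(-1)}(j,\tau,z_{0})$ and its vector-part analogue appearing on the right of (\ref{rela1}), (\ref{rela2}), which identify these combinations of the $Z^{(-1)}$ kernels (with $\tau$ as center and $z_{0}$ as argument) with the adjoint kernels $\widehat{Z}^{(-1)}(1,z_{0},\tau)$ and $\widehat{Z}^{(-1)}(j,z_{0},\tau)$ (now with $z_{0}$ as center and $\tau$ as argument). The essential algebra is to regroup the scalar and vector parts of $jW\,d\tau$ against the scalar and vector parts of the two kernels $Z^{(-1)}(1,\tau,z_{0})$, $Z^{(-1)}(j,\tau,z_{0})$ and, using the multiplication rule for $P^{\pm}$ components together with (\ref{rela1}) and (\ref{rela2}), to show that the integrand reorganizes into exactly
\begin{equation*}
\limfunc{Vec}\Bigl(W(\tau)\widehat{Z}^{(-1)}(1,z_{0},\tau)\Bigr)-j\limfunc{Vec}\Bigl(W(\tau)\widehat{Z}^{(-1)}(j,z_{0},\tau)\Bigr),
\end{equation*}
which is precisely the integrand of the first Cauchy integral formula. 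Once this pointwise identity of integrands is verified, integrating over $\partial\Omega$ and invoking the first Cauchy integral formula yields $2\pi W(z_{0})$ when $z_{0}\in\Omega$ and $0$ when $z_{0}\in\Sigma\backslash\{\overline{\Omega}\}$, which are exactly (\ref{reproductor}) and (\ref{reproductor1}).

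The main obstacle I anticipate is the bookkeeping in that regrouping: the coefficient $jW$ mixes scalar and vector parts (multiplication by $j$ swaps them up to sign), and the relations (\ref{rela1}), (\ref{rela2}) involve both a swap of center and argument and a $\func{Sc}/\func{Vec}$ decomposition, so one must track carefully how the bicomplex multiplication distributes over the idempotent components $P^{\pm}$ and how $\limfunc{Vec}$ of a product interacts with the factor $j$. The cleanest route is probably to verify the identity separately on the scalar and vector channels (equivalently on the $P^{+}$ and $P^{-}$ components), exploiting that for $\mathbb{C}_{j}$-related quantities $\limfunc{Vec}(jX)=\limfunc{Sc}X$ and $\limfunc{Sc}(jX)=-\limfunc{Vec}X$, so that the $j$ in the coefficient and the $j$ multiplying the second term in the first formula conspire correctly. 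Since the statement is attributed to Vekua and the proof in the complex case is by the same substitution, I expect the argument to be essentially a faithful transcription of that computation into the bicomplex algebra, with the only real novelty being the justification that the bicomplex norm and the openness of $\mathcal{R}(\mathbb{B})$ (Proposition \ref{R(B)_open_prop}) keep all the kernel evaluations well defined away from $z_{0}=\tau$.
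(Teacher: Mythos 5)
Your proposal is correct and follows essentially the same route as the paper: the paper's proof consists of exactly the ``direct calculation'' you describe (unpacking the definition of $Z^{(-1)}(jW(\tau)d\tau,\tau,z_{0})$ via (\ref{FP_coefalpha}) and regrouping the scalar/vector parts into the combinations appearing in (\ref{rela1}), (\ref{rela2})), followed by substituting the adjoint kernels $\widehat{Z}^{(-1)}(\alpha,z_{0},\tau)$ and invoking the first Cauchy integral formula. The pointwise identity of integrands you state is precisely the paper's equation (\ref{rela3}) after the substitution, so your argument is a faithful reconstruction of the intended proof.
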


\begin{proof}
Direct calculation gives us the equality%
\begin{equation*}
\int_{\partial\Omega}Z^{(-1)}(jW(\tau)d\tau,\tau,z_{0})=
\end{equation*}%
\begin{align}
& =\limfunc{Vec}\int_{\partial\Omega}W(\tau)\left\{ -\limfunc{Sc}%
Z^{(-1)}(1,\tau,z_{0})+j\limfunc{Sc}Z^{(-1)}(j,\tau,z_{0})\right\} d\tau
\label{rela3} \\
& -j\limfunc{Vec}\int_{\partial\Omega}W(\tau)\left\{ \limfunc{Vec}%
Z^{(-1)}(1,\tau,z_{0})-j\limfunc{Vec}Z^{(-1)}(j,\tau,z_{0})\right\} d\tau. 
\notag
\end{align}

From (\ref{rela1}), (\ref{rela2}), (\ref{rela3}) and from the first Cauchy
integral formula we obtain the result.
\end{proof}

\begin{remark}
\label{RemExample}Formulas (\ref{reproductor}) and (\ref{reproductor1}) were
obtained independently in \cite{Bers Book} and \cite{Vekua} using global
Cauchy kernels. Our proof which follows that from \cite{Vekua} reveals that
the second Cauchy integral formula is equivalent to the first when an
appropriate Cauchy kernel corresponding to (\ref{vekua}) is used. The Cauchy
kernels involved, $Z^{(-1)}(\alpha ,z_{0},z)$ and $\widehat{Z}^{(-1)}(\alpha
,z_{0},z)$ should satisfy equalities (\ref{rela1}), (\ref{rela2}) in the
domain of interest but not necessarily be global. It is worth noticing that
equalities (\ref{reproductor}) and (\ref{reproductor1}) are not valid for an
arbitrary Cauchy kernel of (\ref{vekua}). For example, consider the
following Cauchy kernels%
\begin{equation*}
Z^{(-1)}(1,\zeta ,z)=\frac{1}{z-\zeta }+\xi ,\quad Z^{(-1)}(j,\zeta ,z)=%
\frac{j}{z-\zeta }.
\end{equation*}%
of the equation $\partial _{\overline{z}}W=0$, where $z=x+jy$ and $\zeta
=\xi +j\eta $. If $\Omega $ is the unitary disk, $z_{0}=0$ and $W\equiv 1$
then the left-hand side of (\ref{reproductor}) is%
\begin{align*}
\dint\limits_{0}^{2\pi }Z^{(-1)}(j^{2}e^{j\theta }d\theta ,e^{j\theta },0)&
=-\dint\limits_{0}^{2\pi }\left[ \left( \cos \theta \right)
Z^{(-1)}(1,e^{j\theta },0)+\left( \sin \theta \right) Z^{(-1)}(j,e^{j\theta
},0)\right] d\theta \\
& =\dint\limits_{0}^{2\pi }(1-\cos ^{2}\theta )d\theta =\pi
\end{align*}%
which is different from $2\pi W(0).$
\end{remark}

\begin{definition}
We say that a Cauchy kernel $Z^{(-1)}(\alpha,z_{0},z)$ defined in $\Sigma$
reproduces a solution $W$ of (\ref{vekua}) if for every domain $\Omega$
where $W$ is pseudoanalytic in $\Omega$ and continuous in $\overline{\Omega}$
the formula (\ref{reproductor}) holds. A local Cauchy kernel is called
reproducing if it reproduces all solutions of (\ref{vekua}).
\end{definition}

Combining results of the preceding sections we obtain the following criteria
describing the reproducing Cauchy kernels.

\begin{theorem}
\label{Teo_rep_ker}Let $(F,G)$ be a generating pair and $Z^{(-1)}(\alpha
,z_{0},z)$ be a Cauchy kernel, both corresponding to (\ref{vekua}) in $%
\Sigma $ and such that for each fixed $z$ the kernel $Z^{(-1)}(%
\alpha,z_{0},z)$ is a continuous function in the variable $z_{0}$ and%
\begin{equation}
\lim_{z_{0}\rightarrow z}(z-z_{0})Z^{(-1)}(\alpha,z_{0},z)=\alpha\text{%
,\quad }\alpha=1,j.  \label{cauchykernelstrong}
\end{equation}
Then the following statements are equivalent:

\begin{description}
\item[$(i)$] $Z^{(-1)}(\alpha,z_{0},z)$ reproduces both solutions $F$ and $G 
$;

\item[$(ii)$] There exists a Cauchy kernel $\widehat{Z}^{(-1)}(\alpha
,z_{0},z)$ corresponding to (\ref{vekuaadjoint}) such that (\ref{rela1}) and
(\ref{rela2}) hold;

\item[$(iii)$] Second Cauchy's integral formula holds;

\item[$(iv)$] $Z^{(-1)}(\alpha,z_{0},z)$ is a reproducing Cauchy kernel.
\end{description}
\end{theorem}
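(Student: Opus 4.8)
The four statements will be established by the cycle $(i)\Rightarrow(ii)\Rightarrow(iii)\Rightarrow(iv)\Rightarrow(i)$, of which only the first implication carries genuine content. Indeed $(ii)\Rightarrow(iii)$ is precisely Theorem~\ref{SecondCIF}; $(iii)\Rightarrow(iv)$ is immediate, since the validity of~(\ref{reproductor}) for every solution $W$ and every regular domain is exactly the defining property of a reproducing kernel; and $(iv)\Rightarrow(i)$ holds because $F$ and $G$ are themselves solutions of~(\ref{vekua}), so a kernel reproducing all solutions reproduces $F$ and $G$ in particular. The whole plan therefore concentrates on $(i)\Rightarrow(ii)$.

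For $(i)\Rightarrow(ii)$ I would first use formulas~(\ref{rela1}), (\ref{rela2}) as a \emph{definition} of a candidate kernel $\widehat{Z}^{(-1)}(\alpha,z_{0},z)$, so that (\ref{rela1}), (\ref{rela2}) hold by construction and the only remaining task is to verify that this candidate is a bona fide Cauchy kernel of the adjoint equation~(\ref{vekuaadjoint}). The leading singularity is checked by a direct substitution: rewriting the hypothesis~(\ref{cauchykernelstrong}) in the form $Z^{(-1)}(\alpha,z,z_{0})\sim-\alpha/(z-z_{0})$ as $z\to z_{0}$ and feeding it into (\ref{rela1}), (\ref{rela2}), a short computation of the scalar and vector parts of $\pm1/(z-z_{0})$ and $\pm j/(z-z_{0})$ yields $\widehat{Z}^{(-1)}(1,z_{0},z)\sim(z-z_{0})^{-1}$ and $\widehat{Z}^{(-1)}(j,z_{0},z)\sim j(z-z_{0})^{-1}$, which is exactly the order $-1$ behaviour of a Cauchy kernel; the companion estimate~(\ref{C-K-difference}) is then automatic from Proposition~\ref{C_K_assimpt}.

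The heart of the matter is to show that $\widehat{Z}^{(-1)}(\alpha,z_{0},\cdot)$ solves~(\ref{vekuaadjoint}) in $\Sigma\setminus\{z_{0}\}$, and for this I would invoke the Morera-type characterization of Proposition~\ref{propmoreraadjunta}. The very computation that produces~(\ref{rela3}) shows, with the above identification of the brackets as $\widehat{Z}^{(-1)}(1,z_{0},\tau)$ and $\widehat{Z}^{(-1)}(j,z_{0},\tau)$, that for $W=F$ the left-hand side of the second Cauchy formula equals $\limfunc{Vec}\int_{\partial\Omega}F\widehat{Z}^{(-1)}(1,z_{0},\tau)\,d\tau-j\limfunc{Vec}\int_{\partial\Omega}F\widehat{Z}^{(-1)}(j,z_{0},\tau)\,d\tau$, and similarly for $W=G$. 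Since $Z^{(-1)}$ reproduces $F$ and $G$, these expressions equal $2\pi F(z_{0})$ and $2\pi G(z_{0})$ for \emph{every} regular domain $\Omega\ni z_{0}$; in particular they are independent of such $\Omega$. A standard contour-deformation (thin-tube) argument, comparing the values over two domains containing $z_{0}$, then gives, for every closed path $\Gamma$ in a simply connected subdomain of $\Sigma\setminus\{z_{0}\}$,
\begin{equation*}
\limfunc{Vec}\int_{\Gamma}F\widehat{Z}^{(-1)}(1,z_{0},\tau)\,d\tau-j\limfunc{Vec}\int_{\Gamma}F\widehat{Z}^{(-1)}(j,z_{0},\tau)\,d\tau=0,
\end{equation*}
together with the analogous identity with $F$ replaced by $G$.

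At this point the decisive algebraic observation enters: the first summand is $\mathbb{C}_{i}$-valued (being a vector part) while the second lies in $j\mathbb{C}_{i}$, and $\mathbb{C}_{i}\cap j\mathbb{C}_{i}=\{0\}$; hence each combined identity splits into the pair $\limfunc{Vec}\int_{\Gamma}F\widehat{Z}^{(-1)}(1,z_{0},\tau)\,d\tau=0$ and $\limfunc{Vec}\int_{\Gamma}F\widehat{Z}^{(-1)}(j,z_{0},\tau)\,d\tau=0$, and likewise for $G$. These are precisely the four Morera conditions~(\ref{moreracondition}) required to apply Proposition~\ref{propmoreraadjunta} to each of $V=\widehat{Z}^{(-1)}(1,z_{0},\cdot)$ and $V=\widehat{Z}^{(-1)}(j,z_{0},\cdot)$, so both solve~(\ref{vekuaadjoint}) in $\Sigma\setminus\{z_{0}\}$; combined with the singularity check this makes $\widehat{Z}^{(-1)}$ a Cauchy kernel of~(\ref{vekuaadjoint}) satisfying (\ref{rela1}), (\ref{rela2}), which is statement $(ii)$. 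I expect the main obstacle to be exactly the passage from the reproducing identities, which hold only for domains enclosing $z_{0}$, to the vanishing of the integrals over arbitrary closed paths that do \emph{not} enclose $z_{0}$; the contour-deformation step and the splitting $\mathbb{C}_{i}\cap j\mathbb{C}_{i}=\{0\}$ carry the weight there, the latter being the bicomplex device that replaces the scalar bookkeeping of the classical complex theory.
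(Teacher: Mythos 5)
Your proposal is correct and follows essentially the same route as the paper: the cycle $(i)\Rightarrow(ii)\Rightarrow(iii)\Rightarrow(iv)\Rightarrow(i)$ with all the weight on $(i)\Rightarrow(ii)$, proved by identifying the bracketed combinations in (\ref{rela3}) as the candidate adjoint kernel, checking its $-1$-order singularity from (\ref{cauchykernelstrong}), and invoking the Morera-type Proposition \ref{propmoreraadjunta} to conclude it solves (\ref{vekuaadjoint}). The only differences are expository: you make explicit two steps the paper leaves implicit, namely the contour-deformation passage from the reproducing identities (for $z_{0}$ interior) to the vanishing of the boundary integrals when $z_{0}\notin\overline{\Omega}$, and the splitting of the combined identity into separate Morera conditions via $\mathbb{C}_{i}\cap j\mathbb{C}_{i}=\{0\}$.
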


\begin{proof}

\begin{description}
\item[$(i)\Rightarrow(ii)$] Let $z_{0}\in\Sigma$ and $\Omega$ be a domain
such that $z_{0}\notin\overline{\Omega}\subset\Sigma$. Then from $(i)$ we
have%
\begin{equation*}
\int_{\partial\Omega}Z^{(-1)}(jF(\tau)d\tau,\tau,z_{0})=\int_{\partial\Omega
}Z^{(-1)}(jG(\tau)d\tau,\tau,z_{0})=0.
\end{equation*}
From this equality and from (\ref{rela3}) we obtain 
\begin{equation}
\limfunc{Vec}\int_{\partial\Omega}F(\tau)w_{1,2}(z_{0},\tau )d\tau=\limfunc{%
Vec}\int_{\partial\Omega}G(\tau)w_{1,2}(z_{0},\tau )d\tau=0  \label{rela4}
\end{equation}
where%
\begin{align*}
w_{1}(z_{0},z) & :=-\limfunc{Sc}Z^{(-1)}(1,z,z_{0})+j\limfunc{Sc}%
Z^{(-1)}(j,z,z_{0}), \\
w_{2}(z_{0},z) & :=\limfunc{Vec}Z^{(-1)}(1,z,z_{0})-j\limfunc{Vec}%
Z^{(-1)}(j,z,z_{0}).
\end{align*}
From (\ref{rela4}) and Proposition \ref{propmoreraadjunta} we conclude that $%
w_{1,2}(z_{0},z)$ are solutions of (\ref{vekuaadjoint}) in $\Omega
\backslash\{z_{0}\}$. On the other hand using (\ref{cauchykernelstrong}) we
see that the left-hand side of these equalities defines Cauchy kernels
corresponding to (\ref{vekuaadjoint}). Thus, (\ref{rela1}) and (\ref{rela2})
hold.

\item[$(ii)\Longrightarrow (iii)$] follows from Theorem \ref{SecondCIF}.

\item[$(iii)\Rightarrow (iv)$] and $(iv)\Rightarrow (i)$ is straightforward.
\end{description}
\end{proof}

In the following example we slightly change the Cauchy kernel from Remark %
\ref{RemExample} and obtain a reproducing but not global Cauchy kernel.

\begin{example}
Consider the Cauchy kernel 
\begin{equation*}
Z^{(-1)}(1,\zeta,z)=\frac{1}{z-\zeta}+\xi,\quad Z^{(-1)}(j,\zeta,z)=\frac {j%
}{z-\zeta}+\eta
\end{equation*}
corresponding to the equation $\partial_{\overline{z}}W=0$ with $\zeta
=\xi+j\eta$. Since%
\begin{equation}
-\limfunc{Sc}Z^{(-1)}(1,z,z_{0})+j\limfunc{Sc}Z^{(-1)}(j,z,z_{0})=\frac{1}{%
z-z_{0}}+z
\end{equation}
and%
\begin{equation}
\limfunc{Vec}Z^{(-1)}(1,z,z_{0})-j\limfunc{Vec}Z^{(-1)}(j,z,z_{0})=\frac{1}{%
z-z_{0}},
\end{equation}
due to Theorem \ref{Teo_rep_ker} $Z^{(-1)}(\alpha,\zeta,z)$ is a reproducing
Cauchy kernel in any bounded domain.

Let us prove that this kernel is not a restriction of any global Cauchy
kernel in any simply connected domain containing $(0,0)$ and $(1,0)$ as
interior points. Indeed, take $\zeta=1+\frac{1}{n}$ and $z=\zeta-\frac{1}{\xi%
}=1+\frac{1}{n}-\frac{n}{n+1}$. Then $Z^{(-1)}(1,\zeta,z)=0$. On the other
hand as we know a global Cauchy kernel can not vanish except at $z=\infty$.
\end{example}

\section{Construction of negative formal powers\label{Sect Negative Formal
Powers}}

Let $\left\{ \left( \widehat{F}_{n},\widehat{G}_{n}\right) \right\} $, $%
n=0,1,2\ldots$ be a generating sequence in a domain $\Sigma\subset \mathbb{C}%
_{j}$ embedding a generating pair corresponding to (\ref{vekuaadjoint}). In
this section we show how a set of negative formal powers corresponding to (%
\ref{vekua}) can be constructed following a simple algorithm when the
sequence $\left\{ \left( \widehat{F}_{n},\widehat{G}_{n}\right) \right\} $
and a reproducing Cauchy kernel for (\ref{vekua}) are known.

Let $Z^{(-1)}(\alpha,z_{0},z)$ be a reproducing Cauchy kernel of (\ref{vekua}%
) enjoying the properties from the hypothesis of Theorem \ref{Teo_rep_ker}.
Then, as was established in Theorem \ref{Teo_rep_ker}, $\widehat{Z}%
^{(-1)}(\alpha,z_{0},z)$ constructed by means of (\ref{rela1}) and (\ref%
{rela2}) is a reproducing Cauchy kernel for (\ref{vekuaadjoint}). For an
integer $n\geq2$ define%
\begin{equation*}
\widehat{Z}_{n-1}^{(-n)}(\alpha,z_{0},z):=\frac{(-1)^{n-1}}{(n-1)!}\frac{%
d_{\left( \widehat{F}_{n-2},\widehat{G}_{n-2}\right) }}{dz}...\frac{%
d_{\left( \widehat{F}_{1},\widehat{G}_{1}\right) }}{dz}\frac{d_{\left( 
\widehat{F}_{0},\widehat{G}_{0}\right) }}{dz}\widehat {Z}^{(-1)}(%
\alpha,z_{0},z)\text{.}
\end{equation*}
By construction, $\widehat{Z}_{n-1}^{(-n)}(\alpha,z_{0},z)$ is an $\left( 
\widehat{F}_{n-1},\widehat{G}_{n-1}\right) $-formal power of the order $-n$.
These are related with the formal powers for equation (\ref{vekua}) in the
following way.

\begin{theorem}
Under the above conditions suppose that the functions $\widehat{Z}%
_{n-1}^{(-n)}(\alpha ,z_{0},z)$, $\alpha =1,j$ are continuous in the
variable $z_{0}$ for fixed values of $z$ and that%
\begin{equation}
\lim_{z_{0}\rightarrow z}(z-z_{0})^{n}\widehat{Z}_{n-1}^{(-n)}(\alpha
,z_{0},z)=\alpha ,\quad \alpha =1,j\text{.}  \label{powersstrong}
\end{equation}%
Then 
\begin{equation}
Z^{(-n)}(1,z_{0},z):=(-1)^{n}\limfunc{Sc}\widehat{Z}%
_{n-1}^{(-n)}(1,z,z_{0})+j(-1)^{n+1}\limfunc{Sc}\widehat{Z}%
_{n-1}^{(-n)}(j,z,z_{0}),  \label{RelaFP1}
\end{equation}%
\begin{equation}
Z^{(-n)}(j,z_{0},z):=(-1)^{n+1}\limfunc{Vec}\widehat{Z}%
_{n-1}^{(-n)}(1,z,z_{0})+j(-1)^{n}\limfunc{Vec}\widehat{Z}%
_{n-1}^{(-n)}(j,z,z_{0})  \label{RelaFP2}
\end{equation}%
are negative formal powers corresponding to (\ref{vekua})$.$
\end{theorem}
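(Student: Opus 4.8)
The statement asserts that the functions $Z^{(-n)}(1,z_{0},z)$ and $Z^{(-n)}(j,z_{0},z)$ defined by (\ref{RelaFP1}), (\ref{RelaFP2}) are negative formal powers of order $-n$ for (\ref{vekua}). By the definition (\ref{FP-assimp}) two things must be verified: that for each fixed $z_{0}$ these functions solve (\ref{vekua}) in the variable $z$ on $\Sigma\backslash\{z_{0}\}$, and that they carry the prescribed singularity $Z^{(-n)}(1,z_{0},z)\sim(z-z_{0})^{-n}$, $Z^{(-n)}(j,z_{0},z)\sim j(z-z_{0})^{-n}$ as $z\to z_{0}$. The whole assertion is the local counterpart of the global relation of Theorem \ref{theoremFPRelation} in the case $m=0$, so the plan is to reproduce, for formal powers of arbitrary negative order, the mechanism already used for the Cauchy kernel in Proposition \ref{propkernels} and Theorem \ref{Teo_rep_ker}.

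I would dispose of the singularity first, since it is a direct computation. Renaming the two points in the hypothesis (\ref{powersstrong}) gives $\lim_{z\to z_{0}}(z_{0}-z)^{n}\widehat{Z}_{n-1}^{(-n)}(\alpha,z,z_{0})=\alpha$, hence $\widehat{Z}_{n-1}^{(-n)}(\alpha,z,z_{0})\sim(-1)^{n}\alpha/(z-z_{0})^{n}$ as $z\to z_{0}$. This leading term is $\mathbb{C}_{j}$-valued, so writing $(z-z_{0})^{-n}=p+jq$ with $p,q$ real (whence $j(z-z_{0})^{-n}=-q+jp$), the functionals $\limfunc{Sc}$ and $\limfunc{Vec}$ return precisely these real coefficients. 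Substituting the leading terms into (\ref{RelaFP1}) and (\ref{RelaFP2}) and using $j^{2}=-1$, all powers of $(-1)$ cancel and one obtains $Z^{(-n)}(1,z_{0},z)\sim p+jq=(z-z_{0})^{-n}$ and $Z^{(-n)}(j,z_{0},z)\sim -q+jp=j(z-z_{0})^{-n}$, exactly the behaviour required by (\ref{FP-assimp}).

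The substantial part is to show that $Z^{(-n)}(\alpha,z_{0},\cdot)$ solves (\ref{vekua}), and I would argue by induction on $n$ using a Morera-type characterization. Because the adjoint is involutive, Proposition \ref{propmoreraadjunta} applied to (\ref{vekuaadjoint}), whose adjoint is (\ref{vekua}), characterizes the solutions of (\ref{vekua}) by the vanishing of $\limfunc{Vec}\int_{\Gamma}\widehat{F}\,W\,d\tau$ and $\limfunc{Vec}\int_{\Gamma}\widehat{G}\,W\,d\tau$ over closed paths, where $(\widehat{F},\widehat{G})$ is a generating pair of (\ref{vekuaadjoint}). The base case $n=1$ is exactly Proposition \ref{propkernels}/Theorem \ref{Teo_rep_ker}: since $\widehat{Z}^{(-1)}$ is a reproducing Cauchy kernel of (\ref{vekuaadjoint}), the swapped-argument combination (\ref{RelaFP1})--(\ref{RelaFP2}) with $n=1$ recovers a Cauchy kernel of (\ref{vekua}). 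For the inductive step I would exploit the definition of $\widehat{Z}_{n-1}^{(-n)}$ as an iterated Bers derivative, namely $\widehat{Z}_{n-1}^{(-n)}=\frac{-1}{n-1}\frac{d_{(\widehat{F}_{n-2},\widehat{G}_{n-2})}}{dz}\widehat{Z}_{n-2}^{(-(n-1))}$ in the evaluation variable, which under the argument swap becomes differentiation of $Z^{(-(n-1))}$ in its center; the duality between Bers differentiation and $(F,G)$-integration (Proposition \ref{propMorera}(ii)) then reduces the integral identities for order $-n$ to those for order $-(n-1)$, already known. Proposition \ref{adjointsucessor} keeps track of which generating pair of the sequence each combination is pseudoanalytic for, while the continuity of $\widehat{Z}_{n-1}^{(-n)}$ in its center, assumed in the statement, licenses the differentiation of the relations with respect to the center variable.

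The hard part will be the bookkeeping of the generating-sequence indices: one must establish that the swapped-argument $\limfunc{Sc}/\limfunc{Vec}$ combination of the $(\widehat{F}_{n-1},\widehat{G}_{n-1})$-formal power of order $-n$ is pseudoanalytic for (\ref{vekua}) itself, i.e. the $m=0$ instance of Theorem \ref{theoremFPRelation}, and to do this \emph{without} the global completeness and the decoupling into complex Vekua equations on which the proof of Theorem \ref{theoremFPRelation} relied, that decoupling being unavailable for a general bicomplex equation. A secondary technical point that must be handled carefully is the interchange, in the singularity analysis, of the center-variable limit with the evaluation-variable Bers derivative defining $\widehat{Z}_{n-1}^{(-n)}$; this is precisely where the continuity-in-center hypothesis together with (\ref{powersstrong}) is needed to justify the passage to the limit.
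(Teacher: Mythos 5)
Your overall skeleton coincides with the paper's: the singularity analysis from (\ref{powersstrong}) (your sign computation is correct), and pseudoanalyticity of the swapped-argument combinations via the Morera-type criterion, i.e.\ Proposition \ref{propmoreraadjunta} applied with the roles of (\ref{vekua}) and (\ref{vekuaadjoint}) exchanged. The genuine gap is the one step that carries the whole theorem, which you explicitly defer as ``the hard part'': proving that the Morera integrals vanish at order $-n$. The paper closes this by a specific order of operations that your plan inverts. From the reproducing property (\ref{reproductor1}) of $\widehat{Z}^{(-1)}$ applied to the solutions $\widehat{F}$, $\widehat{G}$ of (\ref{vekuaadjoint}) one has
\begin{equation*}
\int_{\partial \Omega }\widehat{Z}^{(-1)}(j\widehat{F}(\tau )d\tau ,\tau
,z_{0})=\int_{\partial \Omega }\widehat{Z}^{(-1)}(j\widehat{G}(\tau )d\tau
,\tau ,z_{0})=0
\end{equation*}
identically in $z_{0}$ on the open set $\Sigma \backslash \overline{\Omega }$. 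Because these are identities between \emph{bicomplex} pseudoanalytic functions of $z_{0}$, one may take the $n-1$ Bers derivatives $\frac{d_{(\widehat{F}_{n-2},\widehat{G}_{n-2})}}{dz_{0}}\cdots \frac{d_{(\widehat{F}_{0},\widehat{G}_{0})}}{dz_{0}}$ of both sides: the derivative of the zero function is zero, and on the left it passes under the contour integral and through the $\mathbb{C}_{i}$-linear combination defining the coefficient $j\widehat{F}(\tau )d\tau $, yielding the same identities with $\widehat{Z}^{(-1)}$ replaced by $\widehat{Z}_{n-1}^{(-n)}$. Only \emph{after} this differentiation does one expand into the $\limfunc{Sc}$/$\limfunc{Vec}$ form (\ref{VecVec}) and invoke Proposition \ref{propmoreraadjunta}. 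Note that the test pair $(\widehat{F},\widehat{G})$ never changes in this process, so the ``index bookkeeping'' you worry about does not arise, and no appeal to Proposition \ref{adjointsucessor} or to a local analogue of Theorem \ref{theoremFPRelation} is needed.

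Your inductive step instead tries to pass between orders at the level of the already-split (Sc/Vec) Morera identities, using the antiderivative duality of Proposition \ref{propMorera}$(ii)$, and this would fail for two concrete reasons. First, $\limfunc{Sc}$ and $\limfunc{Vec}$ do not commute with the Bers derivative in $z_{0}$: that operator involves multiplication by the bicomplex characteristic coefficients $A$, $B$ and $j$-conjugation, so the functions $w_{1,2}$ at order $-n$ are \emph{not} obtained by applying any $z_{0}$-differential operator to $w_{1,2}$ at order $-(n-1)$; the reduction you describe has no meaning at this level. Second, Proposition \ref{propMorera} relates differentiation and integration in the \emph{same} variable, whereas here the Bers differentiation acts in the evaluation variable $z_{0}$ while the contour integration runs over the center variable $\tau $; no such duality is applicable. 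These are exactly the obstructions you label as bookkeeping and leave unresolved, and they disappear only if one differentiates before splitting, as above. Finally, your ``secondary technical point'' (interchanging the center-variable limit with the evaluation-variable Bers derivative) is moot: condition (\ref{powersstrong}) is a hypothesis imposed directly on $\widehat{Z}_{n-1}^{(-n)}$, so the asymptotics in (\ref{RelaFP1}), (\ref{RelaFP2}) follow by the substitution you already performed, with no interchange of limits to justify.
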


\begin{proof}
Let $z_{0}\in\Sigma$ and $\Omega$ be a domain such that $z_{0}\notin 
\overline{\Omega}\subset\Sigma$. As mentioned above $\widehat{Z}%
^{(-1)}(\alpha,z_{0},z)$ constructed by means of (\ref{rela1}) and (\ref%
{rela2}) is a reproducing Cauchy kernel corresponding to (\ref{vekuaadjoint}%
). This in particular implies that 
\begin{equation*}
\int_{\partial\Omega}\widehat{Z}^{(-1)}(j\widehat{F}(\tau)d\tau,\tau
,z_{0})=\int_{\partial\Omega}\widehat{Z}^{(-1)}(j\widehat{G}(\tau)d\tau
,\tau,z_{0})=0.
\end{equation*}
Taking in the above equalities $n-1$ Bers' derivatives with respect to the
variable $z_{0}$ we obtain%
\begin{equation*}
\int_{\partial\Omega}\widehat{Z}_{n-1}^{(-n)}(j\widehat{F}(\tau)d\tau
,\tau,z_{0})=\int_{\partial\Omega}\widehat{Z}_{n-1}^{(-n)}(j\widehat{G}%
(\tau)d\tau,\tau,z_{0})=0
\end{equation*}
which equivalently can be written as follows%
\begin{equation}
\limfunc{Vec}\int_{\partial\Omega}\widehat{F}(\tau)w_{1,2}(z_{0},\tau)d\tau=%
\limfunc{Vec}\int_{\partial\Omega}\widehat{G}(\tau
)w_{1,2}(z_{0},\tau)d\tau=0,  \label{VecVec}
\end{equation}
where%
\begin{equation*}
\begin{array}{c}
w_{1}(z_{0},z):=-\limfunc{Sc}\widehat{Z}_{n-1}^{(-n)}(1,z,z_{0})+j\limfunc{Sc%
}\widehat{Z}_{n-1}^{(-n)}(j,z,z_{0}),\medskip \\ 
w_{2}(z_{0},z):=\limfunc{Vec}\widehat{Z}_{n-1}^{(-n)}(1,z,z_{0})-j\limfunc{%
Vec}\widehat{Z}_{n-1}^{(-n)}(j,z,z_{0}).%
\end{array}%
\end{equation*}

Equalities (\ref{VecVec}) together with Proposition \ref{propmoreraadjunta}
allow one to conclude that $w_{1,2}(z_{0},z)$ are solutions of (\ref{vekua})
in $\Omega\backslash\{z_{0}\}$. On the other hand, using (\ref{powersstrong}%
) we see that the left-hand side of (\ref{RelaFP1}) and (\ref{RelaFP2})
defines formal powers corresponding to (\ref{vekua}).
\end{proof}

\begin{remark}
The generating sequence $\left\{ \left( \widehat{F}_{n},\widehat{G}%
_{n}\right) \right\} $ required in the above construction can be obtained as
follows. Let $\left\{ \left( F_{-n},G_{-n}\right) \right\} $, $n=0,1,2\ldots$
be a generating sequence embedding a generating pair corresponding to (\ref%
{vekua}). Then by Propositions \ref{cara_coef_(F,G)*} and \ref%
{adjointsucessor}, $\left( \widehat{F},\widehat{G}\right)
:=(jF_{-1}^{\ast},jG_{-1}^{\ast})$ is a generating pair for (\ref%
{vekuaadjoint}) embedded in the generating sequence $\left\{ \left( \widehat{%
F}_{n},\widehat{G}_{n}\right) \right\} $, $n=0,1,2\ldots$ where $\left( 
\widehat{F}_{n},\widehat{G}_{n}\right) :=(j\left( F_{-n-1}\right)
^{\ast},j\left( G_{-n-1}\right) ^{\ast})$.
\end{remark}

\begin{remark}
Let us mention that if $\left\{ \left( F_{-n},G_{-n}\right) \right\} $, $%
n=0,1,2\ldots$ is a complete generating sequence of $\mathbb{C}_{j}$-valued
generating pairs then by Theorem \ref{theoremFPRelation} the corresponding $%
(F,G)$- and $\left( \widehat{F}_{n},\widehat{G}_{n}\right) $- negative
global formal powers satisfy (\ref{RelaFP1}) and (\ref{RelaFP2}).
\end{remark}

\section{Applications to the main Vekua and Schr\"{o}dinger equations\label%
{Sect Applications}}

Let us suppose that the Schr\"{o}dinger equation (\ref{sch}) has a
nonvanishing particular $\mathbb{C}_{i}$-valued solution $f\in C^{2}(\Omega)$%
. Then as it was explained in Section 3.3 the main Vekua equation (\ref%
{mainvekua}) is closely related to the Schr\"{o}dinger equation (\ref{sch}).
In this section we use this relation for obtaining the following results.
Starting from a fundamental solution of (\ref{sch}) we construct explicitly
a reproducing kernel and corresponding negative formal powers for (\ref%
{mainvekua}) as well as a fundamental solution for the Darboux transformed
equation (\ref{sch1}).

\begin{definition}
We say that a function $M(\zeta,z)$ satisfies condition I in $\Omega
\times\Omega$ if $M(\zeta,z)$ together with its first partial derivatives
corresponding to $\zeta$ are continuous functions in $\Omega\times
\Omega\backslash\left\{ \limfunc{diag}\right\} $, $M(\zeta,z)$ is bounded in
some neighborhood of every point $(z_{0},z_{0})$ and the same partial
derivatives behave like $\mathcal{O}(\log|z-\zeta|)$ as $(\zeta
,z)\rightarrow(z_{0},z_{0}).$
\end{definition}

We are mainly interested in fundamental solutions for (\ref{sch})%
\begin{equation*}
S(\zeta,z)=\log|z-\zeta|+R(\zeta,z)
\end{equation*}
fulfilling the following conditions:

\begin{description}
\item[$(C1)$] $S(z,\zeta)$ is a solution of (\ref{sch}) in both variables $z 
$ and $\zeta;$

\item[$(C2)$] $R(\zeta,z)\in C^{1}(\Omega\times\Omega\backslash\left\{ 
\limfunc{diag}\right\} )$ and $R$ is bounded in some neighborhood of every
point $(z_{0,}z_{0})$, $z_{0}\in\Omega$.

\item[$(C3)$] The functions $\partial_{x}R(\zeta,z)$, $\partial_{y}R(%
\zeta,z) $ satisfy the condition I in $\Omega\times\Omega$.
\end{description}

\subsection{Construction of a reproducing Cauchy kernel}

The purpose of this subsection is to construct a reproducing kernel for the
main Vekua equation (\ref{mainvekua}) from a fundamental solution of (\ref%
{sch}). First we notice the following fact.

\begin{proposition}
\label{prop_secessor=adjoint}Under the choice of the generating pair
corresponding to (\ref{mainvekua}) in the form (\ref{Gen pair for the main
Vekua}) the successor and the adjoint equation for (\ref{mainvekua})
coincide.
\end{proposition}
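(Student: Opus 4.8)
The plan is to compute the successor equation and the adjoint equation of the main Vekua equation (\ref{mainvekua}) separately, each directly from the characteristic coefficients of the chosen generating pair, and then to observe that the two resulting equations carry identical coefficients.

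First I would recall from Remark \ref{Remark_coeficients _main_vekua} that, under the choice (\ref{Gen pair for the main Vekua}), the characteristic coefficients are $a_{(f,\frac{j}{f})}=A_{(f,\frac{j}{f})}=0$, $b_{(f,\frac{j}{f})}=\partial_{\overline{z}}f/f$ and $B_{(f,\frac{j}{f})}=\partial_{z}f/f$. Applying Definition \ref{DefSuccessor_bi}, the successor equation of (\ref{mainvekua}) has characteristic coefficients $a=a_{(f,\frac{j}{f})}=0$ and $b=-B_{(f,\frac{j}{f})}=-\partial_{z}f/f$; that is, it is exactly equation (\ref{Main_vekua_suc}), namely $\partial_{\overline{z}}W=-\frac{\partial_{z}f}{f}\overline{W}$.

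Next I would write down the adjoint equation using (\ref{vekuaadjoint}). Since (\ref{mainvekua}) has $a=0$ and $b=\partial_{\overline{z}}f/f$, its adjoint reads $\partial_{\overline{z}}V=-\overline{b}\,\overline{V}=-\overline{\left(\partial_{\overline{z}}f/f\right)}\,\overline{V}$. The crux of the argument---and the only point requiring any care---is the evaluation of the $j$-conjugate $\overline{\partial_{\overline{z}}f/f}$. Because $f$ is $\mathbb{C}_{i}$-valued it is fixed by the conjugation with respect to $j$, so $\overline{f}=f$; likewise $\partial_{x}f$ and $\partial_{y}f$ lie in $\mathbb{C}_{i}$ and are $j$-invariant. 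Since this conjugation is a ring automorphism, it is multiplicative and respects quotients, and applying it to $\partial_{\overline{z}}f=\tfrac{1}{2}(\partial_{x}f+j\partial_{y}f)$ flips the sign of the $j$-term, giving $\overline{\partial_{\overline{z}}f}=\tfrac{1}{2}(\partial_{x}f-j\partial_{y}f)=\partial_{z}f$. Hence $\overline{b}=\partial_{z}f/f$.

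Substituting, the adjoint equation becomes $\partial_{\overline{z}}V=-\frac{\partial_{z}f}{f}\overline{V}$, which is literally the successor equation (\ref{Main_vekua_suc}); the two equations therefore coincide. The computation is entirely routine, so there is no genuine obstacle here: the single subtlety worth flagging is the interplay between the $j$-conjugation entering both the Vekua equation and its adjoint and the fact that $f$ together with its real partial derivatives sits in the copy $\mathbb{C}_{i}$ fixed by that conjugation, which is precisely what converts $\partial_{\overline{z}}$ into $\partial_{z}$ under the bar.
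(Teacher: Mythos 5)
Your proposal is correct and follows exactly the route the paper intends: the paper's proof is just the one-line remark that the claim is a direct consequence of Remark \ref{Remark_coeficients _main_vekua}, and your computation (successor coefficients $a=0$, $b=-B_{(f,\frac{j}{f})}=-\partial_{z}f/f$ from Definition \ref{DefSuccessor_bi}, versus adjoint coefficients $-a=0$, $-\overline{b}=-\partial_{z}f/f$ from (\ref{vekuaadjoint}), using that the $\mathbb{C}_{i}$-valued $f$ and its real partial derivatives are fixed by the $j$-conjugation) is precisely the unpacking of that remark. Nothing is missing; you have simply made explicit the step $\overline{\partial_{\overline{z}}f}=\partial_{z}f$ that the paper leaves implicit.
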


\begin{proof}
This is a direct consequence of Remark \ref{Remark_coeficients _main_vekua}.
\end{proof}

This observation together with Theorem \ref{Teo_rep_ker} allow us to state
that $Z^{(-1)}(\alpha,\zeta,z)$ is a reproducing kernel for (\ref{mainvekua}%
) when the formulas%
\begin{equation}
Z_{1}^{(-1)}(1,\zeta,z)=-\limfunc{Sc}Z^{(-1)}(1,z,\zeta )+j\limfunc{Sc}%
Z^{(-1)}(j,z,\zeta),  \label{Cauchykel1_MainVekua}
\end{equation}%
\begin{equation}
Z_{1}^{(-1)}(j,\zeta,z)=\limfunc{Vec}Z^{(-1)}(1,z,\zeta )-j\limfunc{Vec}%
Z^{(-1)}(j,z,\zeta).  \label{Cauchykel2_MainVekua}
\end{equation}
hold where $Z_{1}^{(-1)}(\alpha,\zeta,z)$ is some Cauchy kernel of (\ref%
{Main_vekua_suc}). We will start by constructing $Z_{1}^{(-1)}(1,\zeta,z)$
and then the above formulas can be used to obtain $Z_{1}^{(-1)}(j,\zeta,z)$
and $Z^{(-1)}(\alpha,\zeta,z)$.

\begin{proposition}
\label{kernel_coef_1}Let $S(\zeta,z)$ be a fundamental solution of (\ref{sch}%
) satisfying $(C1)-(C3)$. Then the function defined as follows 
\begin{equation}
Z_{1}^{(-1)}(1,\zeta,z):=2(\partial_{z}S(\zeta,z)-\frac{\partial_{z}f(z)}{%
f(z)}S(\zeta,z))  \label{ck1_sucess_constr}
\end{equation}
satisfies the following properties:

\begin{description}
\item[$(i)$] 
\begin{equation}
Z_{1}^{(-1)}(1,\zeta,z)=\frac{1}{z-\zeta}-2\frac{\partial_{z}f(z)}{f(z)}%
\log|z-\zeta|+M(\zeta,z),  \label{ck1_sucess_repre}
\end{equation}
where $M(\zeta,z)$ satisfies condition I.

\item[$(ii)$] $Z_{1}^{(-1)}(1,\zeta,z)$ is a Cauchy kernel of (\ref%
{Main_vekua_suc});

\item[$(iii)$] The scalar and the vector parts of $Z_{1}^{(-1)}(1,\zeta,z)$
are solutions of (\ref{sch}) in the variable $\zeta$.
\end{description}
\end{proposition}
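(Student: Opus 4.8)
The plan is to handle the three assertions in turn, the computational heart being the bicomplex identity $2\partial _{z}\log |z-\zeta |=(z-\zeta )^{-1}$. This follows from $\partial _{z}=\tfrac{1}{2}(\partial _{x}-j\partial _{y})$ together with $(z-\zeta )\overline{(z-\zeta )}=|z-\zeta |^{2}$ in $\mathbb{C}_{j}$, which gives $\partial _{z}\log |z-\zeta |=\overline{(z-\zeta )}/(2|z-\zeta |^{2})=1/\bigl(2(z-\zeta )\bigr)$. For $(i)$ I would substitute $S(\zeta ,z)=\log |z-\zeta |+R(\zeta ,z)$ into (\ref{ck1_sucess_constr}) and expand: the term $2\partial _{z}\log |z-\zeta |$ yields precisely $(z-\zeta )^{-1}$, the term $-2\frac{\partial _{z}f}{f}\log |z-\zeta |$ is reproduced unchanged, and the leftover is $M(\zeta ,z):=2\partial _{z}R-2\frac{\partial _{z}f}{f}R$, which matches (\ref{ck1_sucess_repre}). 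It then remains to verify that this $M$ satisfies condition I. Writing $2\partial _{z}R=\partial _{x}R-j\partial _{y}R$, the first contribution inherits condition I directly from (C3), since condition I is preserved under $\mathbb{C}_{j}$-linear combinations with constant coefficients. For the second contribution $\frac{\partial _{z}f}{f}R$ I would use that $\frac{\partial _{z}f}{f}$ is a $C^{1}$ function of $z$ alone (hence bounded on compacta, with vanishing $\zeta $-derivatives), so that boundedness near the diagonal comes from (C2) and its first $\zeta $-derivatives reduce to $\frac{\partial _{z}f}{f}\partial _{\zeta }R$.

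The hard part is precisely estimating these pure $\zeta $-derivatives of $R$: one needs $\partial _{\zeta }R=\mathcal{O}(\log |z-\zeta |)$ near the diagonal, which is not one of the quantities directly controlled by (C3). I would obtain this from the elliptic equation satisfied by $R$, namely $\Delta _{\zeta }R=q(\zeta )\bigl(\log |z-\zeta |+R\bigr)$, a consequence of (C1) together with the harmonicity of $\log |z-\zeta |$ in $\zeta $ off the diagonal and the boundedness of $R$ from (C2); since the right-hand side is itself $\mathcal{O}(\log )$, interior regularity bounds the first $\zeta $-derivatives of $R$. Equivalently, the symmetry of the fundamental solution of the formally self-adjoint operator $\Delta -q$ identifies $\partial _{\zeta }R(\zeta ,z)$ with a $z$-derivative of $R$ evaluated with swapped arguments, which is bounded near the diagonal by (C3) and therefore $\mathcal{O}(\log )$.

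For $(ii)$ I would first check that $Z_{1}^{(-1)}(1,\zeta ,z)$ solves (\ref{Main_vekua_suc}) in $z$ by applying $\partial _{\overline{z}}$ directly. Using $\partial _{\overline{z}}\partial _{z}=\tfrac{1}{4}\Delta $, the relations $\Delta _{z}S=qS$ and $\Delta f=qf$, and that $S$ and $f$ are $\mathbb{C}_{i}$-valued (so that $\overline{\partial _{z}S}=\partial _{\overline{z}}S$ and $\overline{\partial _{z}f}=\partial _{\overline{z}}f$), the contributions involving $q$ cancel, leaving exactly $-\frac{\partial _{z}f}{f}\,\overline{Z_{1}^{(-1)}(1,\zeta ,z)}$ with $\overline{Z_{1}^{(-1)}(1,\zeta ,z)}=2\partial _{\overline{z}}S-2\frac{\partial _{\overline{z}}f}{f}S$. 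The asymptotics $Z_{1}^{(-1)}(1,\zeta ,z)\sim (z-\zeta )^{-1}$ as $z\rightarrow \zeta $ follow at once from $(i)$, since $(z-\zeta )\log |z-\zeta |\rightarrow 0$ and $M$ is bounded; combined with the regularity of $S$ away from the diagonal, this identifies $Z_{1}^{(-1)}(1,\zeta ,z)$ as a Cauchy kernel of (\ref{Main_vekua_suc}).

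Finally, for $(iii)$ I would apply $\Delta _{\zeta }-q(\zeta )$ to $Z_{1}^{(-1)}(1,\zeta ,z)$ regarded as a function of $\zeta $. Since $\partial _{z}$ and multiplication by $\frac{\partial _{z}f(z)}{f(z)}$ both commute with $\Delta _{\zeta }$ and with multiplication by $q(\zeta )$, and since $S$ solves (\ref{sch}) in $\zeta $ by (C1), one gets $(\Delta _{\zeta }-q)Z_{1}^{(-1)}(1,\zeta ,z)=2\partial _{z}(\Delta _{\zeta }-q)S-2\frac{\partial _{z}f}{f}(\Delta _{\zeta }-q)S=0$. Because $\Delta _{\zeta }-q(\zeta )$ acts componentwise on the scalar and vector parts (both $\mathbb{C}_{i}$-valued) with $\mathbb{C}_{i}$-valued coefficient $q$, the vanishing of the whole bicomplex expression is equivalent to the vanishing of the equation for the scalar part and for the vector part separately, which is exactly $(iii)$.
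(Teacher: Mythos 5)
Parts $(i)$ and $(ii)$ of your proposal are correct and, on $(i)$, you are actually more careful than the paper, whose proof of $(i)$ is a one-line assertion that everything "follows directly from $(C1)$--$(C3)$". You correctly isolate the genuine subtlety there: condition I for $M=2\partial _{z}R-2\frac{\partial _{z}f}{f}R$ requires control of the pure $\zeta $-derivatives $\partial _{\xi }R,\partial _{\eta }R$ near the diagonal, which is not among the quantities listed in $(C2)$--$(C3)$, and your main fix is sound (the equation $\Delta _{\zeta }R=q(\zeta )S(\zeta ,z)$ holds distributionally across the diagonal because $R$ is bounded, and interior elliptic estimates then bound $\partial _{\zeta }R$). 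Your ``equivalent'' alternative, however, is not available: nothing in $(C1)$--$(C3)$ asserts the symmetry $S(\zeta ,z)=S(z,\zeta )$, so the identification of $\partial _{\zeta }R$ with a swapped-argument $z$-derivative cannot be invoked. Part $(ii)$ coincides with the paper's argument, with the paper's ``by construction'' made explicit by your computation.

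The genuine gap is in $(iii)$. Your argument is the formal commutation $(\Delta _{\zeta }-q)\,\partial _{z}S=\partial _{z}(\Delta _{\zeta }-q)S$, but to apply $\Delta _{\zeta }$ to $\partial _{z}S$ at all you need $\partial _{z}S$ to possess second $\zeta $-derivatives, and to commute the operators you need existence and equality of the mixed third-order partials $\partial _{\zeta }^{2}\partial _{z}S=\partial _{z}\partial _{\zeta }^{2}S$. None of this is granted by $(C1)$--$(C3)$: condition $(C3)$ controls only the \emph{first} $\zeta $-derivatives of $\partial _{x}R$, $\partial _{y}R$. Justifying precisely this step is the entire substance of the paper's proof of $(iii)$: one forms the difference quotients $\varphi _{n}(\zeta )=n\bigl(S(\zeta ,z_{0}+\tfrac{1}{n})-S(\zeta ,z_{0})\bigr)$, which are solutions of (\ref{sch}) in $\zeta $; using the mean value theorem and the continuity of $\partial _{x}S$ and of $\partial _{\xi }\partial _{x}S$, $\partial _{\eta }\partial _{x}S$ off the diagonal (which $(C3)$ does provide), one shows $\varphi _{n}\rightarrow \partial _{x}S(\cdot ,z_{0})$ together with its first $\zeta $-derivatives, uniformly near any $\zeta _{0}\neq z_{0}$; then Corollary \ref{conv_sch} (closedness of the solution set of (\ref{sch}) under uniform convergence of the functions and their first derivatives, itself resting on Theorem \ref{theorem_pseudo_conv} and thus requiring no second-order convergence) yields that $\partial _{x}S(\cdot ,z_{0})$ solves (\ref{sch}); the vector part is treated identically. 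Unless you replace your commutation step by an argument of this type, or by a regularity argument establishing the needed joint third-order differentiability of $S$, part $(iii)$ remains unproven under the stated hypotheses.
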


\begin{proof}
$(i)$ Equality (\ref{ck1_sucess_repre}) and the properties of $M$ follow
directly from (\ref{ck1_sucess_constr}) and from the conditions $(C1)-(C3)$
defining the fundamental solution $S$.

$(ii)$ By construction $Z_{1}^{(-1)}(1,\zeta,z)$ is a solution of (\ref%
{Main_vekua_suc}) and from (\ref{ck1_sucess_repre}) we have%
\begin{equation*}
\underset{z\rightarrow\zeta}{\lim}(z-\zeta)Z_{1}^{(-1)}(1,\zeta,z)=1,
\end{equation*}
from which we conclude that this function is a Cauchy kernel.

$(iii)$ We prove that the scalar part of (\ref{ck1_sucess_constr})%
\begin{equation*}
\limfunc{Sc}Z_{1}^{(-1)}(1,\zeta,z)=\partial_{x}S(\zeta,z)-\frac {f_{x}(z)}{%
f(z)}S(\zeta,z)
\end{equation*}
is a solution of (\ref{sch}) in the variable $\zeta$. As $S(\zeta,z)$ is
already a solution of (\ref{sch}) with respect to $\zeta$, it is sufficient
to prove that $\partial_{x}S(\zeta,z)$ is a solution of the same equation in 
$\zeta$. We give a rigorous proof of this fact. Let $z_{0}=x_{0}+jy_{0}\in%
\Omega$ and define $\varphi(\zeta)=\partial_{x}S(\zeta,z_{0}),$%
\begin{equation*}
\varphi_{n}(\zeta)=\frac{S(\zeta,z_{0}+\frac{1}{n})-S(\zeta,z_{0})}{\frac {1%
}{n}},\quad n\in\mathbb{N}.
\end{equation*}
Each function $\varphi_{n}(\zeta)$ is a solution of (\ref{sch}) and $%
\varphi_{n}(\zeta)\rightarrow\varphi(\zeta)$ pointwise. Let $%
\zeta_{0}\in\Omega$, $\zeta_{0}\neq z_{0}$. We will prove that 
\begin{equation*}
\varphi_{n}(\zeta)\rightarrow\varphi(\zeta),\quad\partial_{\xi}\varphi
_{n}(\zeta)\rightarrow\partial_{\xi}\varphi(\zeta)\text{ and }\partial_{\eta
}\varphi_{n}(\zeta)\rightarrow\partial_{\eta}\varphi(\zeta)
\end{equation*}
uniformly is some neighborhood of $\zeta_{0}$. This together with Corollary %
\ref{conv_sch} allows us to conclude that $\varphi(\zeta)$ is a solution of (%
\ref{sch}) in such neighborhood of $\zeta_{0}$, and also in $\Omega
\backslash\left\{ z_{0}\right\} $ (because $\zeta_{0}$ is an arbitrary
point).

By the mean value theorem there exists some point $x_{n}\in \left[
x_{0,}x_{0}+\frac{1}{n}\right] $ such that%
\begin{equation*}
\varphi _{n}(\zeta )=\partial _{x}S(\zeta ,x_{n}+jy_{0})
\end{equation*}%
Let $\varepsilon >0$. Then by the continuity of $\partial _{x}S$ in $\Omega
\times \Omega \backslash \left\{ diag\right\} $ there exist $r>0$ and $%
n_{0}\in \mathbb{N}$ such that%
\begin{equation*}
\left\vert \varphi (\zeta )-\varphi _{n}(\zeta )\right\vert =\left\vert
\partial _{x}S(\zeta ,z_{0})-\partial _{x}S(\zeta ,x_{n}+jy_{0})\right\vert
<\varepsilon
\end{equation*}%
for all $\zeta $ belonging to the disk $D_{r}(\zeta _{0})$ of radius $r$ and
center $\zeta _{0}$ and $n\geq n_{0}$. This shows that $\varphi _{n}(\zeta
)\rightarrow \varphi (\zeta )$ uniformly in $D_{r}(\zeta _{0})$. Let us now
prove $\partial _{\xi }\varphi _{n}(\zeta )\rightarrow \partial _{\xi
}\varphi (\zeta )$. From the continuity $\ $of $\partial _{\xi }\partial
_{x}S$ in $\Omega \times \Omega \backslash \left\{ \limfunc{diag}\right\} $
and using the mean value theorem we have that there exists $x_{n}\in \left[
x_{0,}x_{0}+\frac{1}{n}\right] $ such that%
\begin{align*}
\partial _{\xi }\varphi _{n}(\zeta )& =\frac{\partial _{\xi }S(\zeta ,z_{0}+%
\frac{1}{n})-\partial _{\xi }S(\zeta ,z_{0})}{\frac{1}{n}} \\
& =\partial _{x}\partial _{\xi }S(\zeta ,x_{n}+iy_{0}) \\
& =\partial _{\xi }\partial _{x}S(\zeta ,x_{n}+iy_{0}).
\end{align*}%
As above, given $\varepsilon >0$ we can find some $r$ such that%
\begin{equation*}
\left\vert \partial _{\xi }\varphi _{n}(\zeta )-\partial _{\xi }\varphi
(\zeta )\right\vert =\left\vert \partial _{\xi }\partial _{x}S(\zeta
,x_{n}+iy_{0})-\partial _{\xi }\partial _{x}S(\zeta ,z_{0})\right\vert
<\varepsilon
\end{equation*}%
for all $\zeta \in D_{r}(\zeta _{0})$ which implies that $\partial _{\xi
}\varphi _{n}(\zeta )\rightarrow \partial _{\xi }\varphi (\zeta )$ uniformly
in $D_{r}(\zeta _{0})$. The corresponding convergence for the partial
derivatives with respect to $\eta $ is proved analogously.
\end{proof}

\bigskip

Formulas (\ref{Cauchykel1_MainVekua}), (\ref{Cauchykel2_MainVekua}) together
with $(iii)$ of the above proposition suggest that a Cauchy kernel of (\ref%
{Main_vekua_suc}) with the coefficient $\alpha=j$ can be constructed from (%
\ref{ck1_sucess_constr}) according to the formula%
\begin{equation}
Z_{1}^{(-1)}(j,\zeta,z):=T_{f(\zeta)}(-Z_{1}^{(-1)}(1,\zeta,z)).
\label{ckj_sucess_constr}
\end{equation}
where the integral operator $T_{f(\zeta)}$ (defined by (\ref{Darbouxttransf}%
)) acts with respect to the variable $\zeta$ along some path $\Gamma\subset
\Omega$ joining $\zeta_{0}$ with $\zeta$ and not passing through the point $%
z $. We prove this fact in the following proposition.

\begin{proposition}
\label{prop_ckj_const}Let $Z_{1}^{(-1)}(1,\zeta,z)$ be the Cauchy kernel of (%
\ref{Main_vekua_suc}) given by (\ref{ck1_sucess_constr}), $\zeta_{0}\in
\Omega$ and $\Omega_{0}$ be a simply connected subdomain of $\Omega$ such
that $\zeta_{0}\notin\Omega_{0}$. Then

\begin{description}
\item[$(i)$] the function (\ref{ckj_sucess_constr}) is a Cauchy kernel with
the coefficient $j$ for equation (\ref{ck1_sucess_constr}) in $\Omega_{0}$

\item[$(ii)$] the scalar and the vector parts of $Z_{1}^{(-1)}(j,\zeta,z)$
are solutions of (\ref{sch1}) in the variable $\zeta$

\item[$(iii)$] the equality holds%
\begin{equation}
Z_{1}^{(-1)}(j,\zeta,z)=\dfrac{j}{z-\zeta}+2j\dfrac{\partial_{z}f(z)}{f(z)}%
\log\left\vert z-\zeta\right\vert +N(\zeta,z)  \label{ckj_sucess_repre}
\end{equation}
where $N(\zeta,z)$ is a function satisfying condition I in $\Omega_{0}$.
\end{description}
\end{proposition}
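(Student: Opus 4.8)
The plan is to read the operator $T_{f(\zeta )}$ as acting on the two $\mathbb{C}_{i}$-valued components $\Phi :=\func{Sc}Z_{1}^{(-1)}(1,\zeta ,z)$ and $\Psi :=\func{Vec}Z_{1}^{(-1)}(1,\zeta ,z)$ separately (the formula (\ref{Darbouxttransf}) itself only makes sense on $\mathbb{C}_{i}$-valued arguments, since $\overline{A}$ returns $\mathbb{C}_{i}$-values), so that by definition $Z_{1}^{(-1)}(j,\zeta ,z)=-T_{f(\zeta )}(\Phi )-jT_{f(\zeta )}(\Psi )$, with the operator acting only in $\zeta $ along a path in $\Omega _{0}$. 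First I would record four elementary but decisive algebraic properties of $T_{f(\zeta )}$, all immediate from its definition together with the $\mathbb{C}_{i}$-linearity of $\overline{A}$ and of $\partial _{\overline{\zeta }}$: it is $\mathbb{C}_{i}$-linear; it commutes with the conjugation in $j$, i.e. $T_{f(\zeta )}(\overline{W})=\overline{T_{f(\zeta )}(W)}$ (componentwise action merely flips the sign of the vector part); it commutes with multiplication by any $\mathbb{B}$-valued factor that is constant in $\zeta $; and, since $z$ and $\zeta $ are independent and the constant-coefficient operator $\partial _{\overline{z}}$ commutes with every $\zeta $-operation (differentiation under the $\overline{A}$-integral and equality of mixed partials, legitimate off the diagonal by the regularity in $(C1)$--$(C3)$), one has $\partial _{\overline{z}}T_{f(\zeta )}(W)=T_{f(\zeta )}(\partial _{\overline{z}}W)$.

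Part $(ii)$ is then immediate: by Proposition \ref{kernel_coef_1}$(iii)$ the components $\Phi $ and $\Psi $ solve (\ref{sch}) in the variable $\zeta $, so Theorem \ref{Theo_Tf} applied in $\zeta $ shows that $T_{f(\zeta )}(\Phi )$ and $T_{f(\zeta )}(\Psi )$, hence the scalar and vector parts of $Z_{1}^{(-1)}(j,\zeta ,z)$, solve the Darboux transformed equation (\ref{sch1}) in $\zeta $.

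For the first half of $(i)$ --- that $Z_{1}^{(-1)}(j,\zeta ,z)$ solves the successor equation (\ref{Main_vekua_suc}) in $z$ --- I would run the commutation properties above as a short chain. Writing $c=c(z)=\partial _{z}f(z)/f(z)$ and using that, by Proposition \ref{kernel_coef_1}$(ii)$, the kernel $Z_{1}^{(-1)}(1,\zeta ,z)$ satisfies $\partial _{\overline{z}}Z_{1}^{(-1)}(1,\zeta ,z)=-c\,\overline{Z_{1}^{(-1)}(1,\zeta ,z)}$ in $z$, one gets
\[
\partial _{\overline{z}}Z_{1}^{(-1)}(j,\zeta ,z)=T_{f(\zeta )}\!\left( c\,\overline{Z_{1}^{(-1)}(1,\zeta ,z)}\right) =c\,\overline{T_{f(\zeta )}(Z_{1}^{(-1)}(1,\zeta ,z))}=-c\,\overline{Z_{1}^{(-1)}(j,\zeta ,z)},
\]
which is exactly (\ref{Main_vekua_suc}); here the first equality uses commutation with $\partial _{\overline{z}}$, the second uses commutation with the $\zeta $-constant factor $c$ and with conjugation, and the third uses the definition (\ref{ckj_sucess_constr}). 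The remaining half of $(i)$, the normalization $\lim_{z\rightarrow \zeta }(z-\zeta )Z_{1}^{(-1)}(j,\zeta ,z)=j$, will drop out of the representation $(iii)$ once the latter is established.

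The genuine work, and the main obstacle, is $(iii)$. I would insert the decomposition (\ref{ck1_sucess_repre}) of $Z_{1}^{(-1)}(1,\zeta ,z)$ into the definition and use $\mathbb{B}$-linearity to split $Z_{1}^{(-1)}(j,\zeta ,z)$ into the contributions of $-\tfrac{1}{z-\zeta }$, of $2c\log |z-\zeta |$ (the factor $2c$ pulled out, being constant in $\zeta $), and of $-M(\zeta ,z)$, each image formed with the path dependent $T_{f,\Gamma }$ since the individual terms are not themselves solutions of (\ref{sch}). The decisive computation is the action of $T_{f(\zeta )}$ on the principal part: expanding $f(\zeta )=f(z)+\mathcal{O}(|z-\zeta |)$ near the diagonal reduces it, to leading order, to the free computation in which $T_{f(\zeta )}$ carries $-\tfrac{1}{z-\zeta }$ (through its two components) into $\tfrac{j}{z-\zeta }$, while the first order variation of $f$ generates a logarithmic term; combined with the image of $2c\log |z-\zeta |$ this yields precisely the coefficient $+2j\,\partial _{z}f/f$ of $\log |z-\zeta |$ in (\ref{ckj_sucess_repre}). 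The hardest point is to show that the collected remainder $N(\zeta ,z)$ satisfies condition I: this requires singular-integral estimates for $\overline{A}$ (of the same Calder\'{o}n--Zygmund type as those recorded for $T_{\overline{z}}$ in Proposition \ref{Tz_prop}), applied to $\partial _{\overline{\zeta }}$ of the $\tfrac{1}{z-\zeta }$ and $\log |z-\zeta |$ terms and to $M$, together with the $\mathcal{O}(\log |z-\zeta |)$ bounds on the derivatives of $M$ furnished by condition I. Finally, since $(z-\zeta )\log |z-\zeta |\rightarrow 0$ and $N$ is bounded near the diagonal, multiplying (\ref{ckj_sucess_repre}) by $(z-\zeta )$ and letting $z\rightarrow \zeta $ gives the normalization $j$, completing $(i)$.
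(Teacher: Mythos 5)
Your overall skeleton matches the paper's (part $(ii)$ via Proposition \ref{kernel_coef_1}$(iii)$ and Theorem \ref{Theo_Tf}, the decomposition of (\ref{ck1_sucess_repre}) for $(iii)$, the normalization deduced from $(iii)$), but two of your steps have genuine gaps. The central one is the commutation $\partial_{\overline{z}}T_{f(\zeta)}=T_{f(\zeta)}\partial_{\overline{z}}$, on which your whole proof of the solution property in $(i)$ rests. The integrand of $T_{f(\zeta)}$ already contains $\partial_{\overline{\zeta}}$ of the components of $Z_{1}^{(-1)}(1,\zeta,z)$, and these components contain $\partial_{z}S(\zeta,z)$; hence differentiating under the path integral with respect to $\overline{z}$ requires the existence and joint continuity (off the diagonal) of third-order mixed derivatives such as $\partial_{\overline{z}}\partial_{\overline{\zeta}}\partial_{z}S$, and any Schwarz-type interchange of $\partial_{\overline{z}}$ with $\partial_{\overline{\zeta}}$ needs the same third-order control. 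Conditions $(C1)$--$(C3)$ stop at second order: they only give continuity of $\partial_{\zeta}\partial_{z}R$ off the diagonal, so the regularity you appeal to does not in fact legitimize the interchange. This is exactly why the paper does not commute operators: it shows instead (reasoning as in Proposition \ref{kernel_coef_1}, via the mean value theorem and Theorem \ref{theorem_pseudo_conv}) that the $\zeta$-difference quotients of $Z_{1}^{(-1)}(1,\zeta,z)$ --- which are exact solutions of (\ref{Main_vekua_suc}) in $z$, no extra smoothness needed --- converge locally uniformly, so that $\partial_{\xi}Z_{1}^{(-1)}(1,\zeta,z)$ and $\partial_{\eta}Z_{1}^{(-1)}(1,\zeta,z)$ solve (\ref{Main_vekua_suc}) in $z$, and then realizes $T_{f(\zeta)}$ as a locally uniform limit of Riemann sums, i.e.\ of $\mathbb{C}_{i}$-linear combinations with $z$-independent coefficients of such solutions, applying Theorem \ref{theorem_pseudo_conv} once more. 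Your argument needs this replacement (or an additional smoothness hypothesis on $S$) to go through.

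Two further points. First, you never verify that (\ref{ckj_sucess_constr}) is well defined: $T_{f(\zeta)}$ is a path integral in $\zeta$, and $\Omega_{0}\setminus\{z\}$ is not simply connected, so single-valuedness is not automatic; the paper proves it by showing, from condition I satisfied by $M$, that $\lim_{\varepsilon\rightarrow 0}T_{f(\zeta),\Gamma_{\varepsilon}}(-Z_{1}^{(-1)}(1,\zeta,z))=0$ for circles $\Gamma_{\varepsilon}$ around $z$, whence all loop integrals around $z$ vanish. Second, for $(iii)$ the tool you invoke is the wrong one: $\overline{A}$ is a line integral along a curve, not an area potential like $T_{\overline{z}}$, so Calder\'on--Zygmund-type estimates as in Proposition \ref{Tz_prop} do not apply to it. What the paper actually does is choose a concrete path $\Gamma_{1}\cup\Gamma_{2}\cup\Gamma_{3}$ (a fixed arc away from $z$, a straight segment, and a circular arc centered at $z$) and compute the line integrals of the singular terms $\frac{1}{z-\zeta}$ and $\log\left\vert z-\zeta\right\vert$ in closed form, from which the boundedness of $N$ near the diagonal and the $\mathcal{O}(\log\left\vert z-\zeta\right\vert)$ behaviour of its $\zeta$-derivatives are read off. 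Your leading-order computation of the principal part and of the coefficient $2j\partial_{z}f/f$ is a plausible sketch of the outcome of that computation, but the estimate of the remainder --- the real analytic content of $(iii)$ --- is missing.
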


\begin{proof}
It follows from $(iii)$ of Proposition \ref{kernel_coef_1} and from Remark %
\ref{T_f_def} that (\ref{ckj_sucess_constr}) is a well defined function when 
$\zeta$ belongs to any simply connected subdomain of $\Omega_{0}$ not
containing $z$. Since the function $M(\zeta,z)$ in (\ref{ck1_sucess_repre})
satisfies condition I, it is easy to see that%
\begin{equation*}
\lim_{\varepsilon\rightarrow0}T_{f(\zeta),\Gamma_{%
\varepsilon}}(-Z_{1}^{(-1)}(1,\zeta,z))=0
\end{equation*}
where $\Gamma_{\varepsilon}$ is the boundary of a disk with the center $z$
and radius $\varepsilon$. This implies that (\ref{ckj_sucess_constr}) is a
univalued function in $\Omega_{0}\backslash\left\{ z\right\} $. Then $(ii)$
of this proposition follows from Theorem \ref{Theo_Tf}.

Let us prove $(iii)$. It is clear that the function $N(\zeta ,z)$ in (\ref%
{ckj_sucess_repre}) together with its first partial derivatives
corresponding to $\zeta $ are continuous functions in $\Omega _{0}\times
\Omega _{0}\backslash \left\{ \limfunc{diag}\right\} $. It remains to study
the behavior of $N(\zeta ,z)$ and of its partial derivatives corresponding
to $\zeta $ when $(\zeta ,z)\rightarrow (z_{0},z_{0})\in \Omega _{0}\times
\Omega _{0}$. Consider the scalar part of $N(\zeta ,z)$, 
\begin{equation}
\limfunc{Sc}N(\zeta ,z)=T_{f(\zeta )}(-\limfunc{Sc}Z_{1}^{(-1)}(1,\zeta ,z))-%
\frac{y-\eta }{\left\vert z-\xi \right\vert ^{2}}-\frac{\partial _{y}f(z)}{%
f(z)}\log \left\vert z-\xi \right\vert .  \label{scN}
\end{equation}%
Let $z_{0}\in \Omega _{0}$, $\varepsilon _{0}>0$ such that $D_{2\varepsilon
_{0}}(z_{0})\subset \Omega _{0}$ and $\zeta =\xi +j\eta ,z=x+jy\in
D_{\varepsilon _{0}}(z_{0})$. Consider the path $\Gamma =\Gamma _{1}\cup
\Gamma _{2}\cup \Gamma _{3}$ where $\Gamma _{1}$ is some rectifiable curve
joining $\zeta _{0}$ with $z+\varepsilon _{0}$ and not passing through the
point $z$, and%
\begin{equation*}
\Gamma _{2}(t)=t+jy,\quad t\in \left[ x+\varepsilon _{0},x+\left\vert \zeta
-z\right\vert \right] ,
\end{equation*}%
\begin{equation*}
\Gamma _{3}(t)=z+\left\vert \zeta -z\right\vert e^{jt},\quad t\in \left[
0,\arg (\zeta -z)\right] .
\end{equation*}%
It is clear that $\Gamma $ leads from $\zeta _{0}$ to $\zeta $ and $\Gamma
_{2}\cup \Gamma _{3}\subset \Omega _{0}$. We have%
\begin{equation}
T_{f(\zeta ),\Gamma }(-\limfunc{Sc}Z_{1}^{(-1)}(1,\zeta
,z))=\sum_{m=1}^{3}T_{f(\zeta ),\Gamma _{m}}(-\limfunc{Sc}%
Z_{1}^{(-1)}(1,\zeta ,z)),  \label{Tf_gamam}
\end{equation}%
where%
\begin{equation*}
\limfunc{Sc}Z_{1}^{(-1)}(1,\zeta ,z)=\frac{x-\xi }{\left\vert z-\xi
\right\vert ^{2}}-\frac{f_{x}(z)}{f(z)}\log \left\vert z-\xi \right\vert +%
\limfunc{Sc}M(\zeta ,z).
\end{equation*}%
The function $T_{f(\zeta ),\Gamma _{1}}(-\limfunc{Sc}Z_{1}^{(-1)}(1,\zeta
,z))$ is continuous (since $z\notin \Gamma _{1}$) then and as $\limfunc{Sc}%
M(\zeta ,z)$ satisfies condition I, the function $T_{f(\zeta ),\Gamma
_{2}\cup \Gamma _{3}}(-\limfunc{Sc}M(\zeta ,z))$ is bounded in some
neighborhood of $(z_{0},z_{0}).$ The remaining terms of (\ref{Tf_gamam}) are

\begin{equation*}
\begin{array}{c}
\begin{array}{cc}
T_{f(\zeta),\Gamma_{2}}\left( \dfrac{\xi-x}{\left\vert z-\xi\right\vert ^{2}}%
+\dfrac{f_{x}(z)}{f(z)}\log\left\vert z-\xi\right\vert \right) & =\dfrac{%
f_{\eta}(z+\left\vert \zeta-z\right\vert )}{f(\zeta)}\log\left\vert
z-\xi\right\vert -\dfrac{f_{\eta}(z+\varepsilon_{0})}{f(\zeta)}\log
\varepsilon_{0}\medskip%
\end{array}
\\ 
+\dfrac{1}{f(\zeta)}\dint _{x+\varepsilon_{0}}^{x+\left\vert
z-\xi\right\vert }\left( \dfrac{f_{x}(z)}{f(z)}f_{\eta}(t+iy)-f_{\zeta%
\eta}(t+iy)\right) \log\left\vert t-x\right\vert dt,%
\end{array}%
\end{equation*}
\bigskip\medskip%
\begin{equation*}
T_{f(\zeta),\Gamma_{3}}\left( \dfrac{\xi-x}{\left\vert z-\xi\right\vert ^{2}}%
\right) =\dfrac{y-\eta}{\left\vert z-\xi\right\vert ^{2}}-\frac{1}{f(\zeta)}%
\dint _{0}^{\arg(\xi-z)}f_{\xi}(\Gamma_{3}(t))dt
\end{equation*}
\bigskip and%
\begin{equation*}
\begin{array}{c}
\begin{array}{cc}
T_{f(\zeta),\Gamma_{3}}\left( \log\left\vert z-\xi\right\vert \right) & =%
\dfrac{1}{f(\zeta)}\dint _{0}^{\arg(\xi-z)}f(\Gamma_{3}(t))dt%
\end{array}
\medskip \\ 
-\dfrac{\left\vert z-\zeta\right\vert \log\left\vert z-\zeta\right\vert }{%
f(\zeta)}\dint _{0}^{\arg(\xi-z)}\left( f_{\eta}(\Gamma_{3}(t))\sin
t+f_{\xi}(\Gamma_{3}(t)\cos t\right) dt.%
\end{array}%
\end{equation*}
Substituting these relations on the right-hand side of (\ref{scN}) we
conclude that the function $\limfunc{Sc}N(\zeta,z)$ is bounded when $%
(\zeta,z)\rightarrow(z_{0},z_{0})$. Differentiating (\ref{scN}) with respect
to $\xi$ and using the relation

\begin{equation*}
\begin{array}{r}
\begin{array}{cc}
\partial_{\xi}T_{f(\zeta)}(-\limfunc{Sc}Z_{1}^{(-1)}(1,\zeta,z))= & -\frac{%
f_{\xi}(\zeta)}{f(\zeta)}T_{f(\zeta)}(-\limfunc{Sc}Z_{1}^{(-1)}(1,\zeta,z))%
\end{array}
\medskip \\ 
-\frac{f_{\eta}(\zeta)}{f(\zeta)}\limfunc{Sc}Z_{1}^{(-1)}(1,\zeta
,z)+\partial_{\eta}\limfunc{Sc}Z_{1}^{(-1)}(1,\zeta,z)%
\end{array}%
\end{equation*}
we obtain that $\partial_{\xi}\limfunc{Sc}N(\zeta,z)=\mathcal{O}(\left\vert
\log|z-\zeta|\right\vert )$ as $(\zeta,z)\rightarrow(z_{0},z_{0})$. A
similar reasoning can be used to establish that $\partial_{\eta }\limfunc{Sc}%
N(\zeta,z)=\mathcal{O}(\left\vert \log|z-\zeta|\right\vert )$ as $%
(\zeta,z)\rightarrow(z_{0},z_{0})$. This shows that $\limfunc{Sc}N(\zeta,z)$
satisfies condition I in $\Omega_{0}$. An analogous reasoning is applicable
to $\limfunc{Vec}N(\zeta,z)$ that finishes the proof of part $(iii)$.

Finally, we prove that (\ref{ckj_sucess_constr}) is a solution of (\ref%
{Main_vekua_suc}) in the variable $z$. This together with (\ref%
{ckj_sucess_repre}) will imply $(i)$. Using the fact that $%
Z_{1}^{(-1)}(1,\zeta,z)$ is a solution of (\ref{Main_vekua_suc}) and with
the help of Theorem \ref{theorem_pseudo_conv} we can prove, reasoning as in
Proposition \ref{kernel_coef_1}, that $\partial_{\xi}Z_{1}^{(-1)}(1,\zeta,z)$%
, $\partial_{\eta}Z_{1}^{(-1)}(1,\zeta,z)$ are also solutions of (\ref%
{Main_vekua_suc}) in $z$. Then the fact that (\ref{ckj_sucess_constr}) is a
solution of (\ref{Main_vekua_suc}) is obtained from the integration with
respect to $\zeta$ of solutions of (\ref{Main_vekua_suc}) in the variable $z$%
. The proposition is proved.
\end{proof}

\begin{example}
\label{example_f=x}Let $f=x$. Consider the corresponding main Vekua equation 
$\partial _{\overline{z}}W=\frac{1}{2x}\overline{W}$ in some domain $\Omega $
such that $\overline{\Omega }$ has no common point with the axis $x=0$. The
successor equation has the form%
\begin{equation}
\partial _{\overline{z}}W=-\frac{1}{2x}\overline{W}  \label{ex1_suce_eq}
\end{equation}%
and the related Schr\"{o}dinger equations are $\Delta u=0$ \ and \ $\Delta v=%
\frac{2}{x^{2}}v$. A fundamental solution for the Laplace equation
satisfying $(C1)-(C3)$ can be chosen as $S(\zeta ,z)=\log \left\vert z-\zeta
\right\vert $. A reproducing Cauchy kernel for (\ref{ex1_suce_eq}) is
obtained by means of the procedure described above and has the following
form 
\begin{equation}
Z_{1}^{(-1)}(1,\zeta ,z)=\frac{1}{z-\zeta }-\frac{1}{x}\log \left\vert
z-\zeta \right\vert ,  \label{ex1_suce_CK1}
\end{equation}%
\begin{align*}
Z_{1}^{(-1)}(j,\zeta ,z)& =\frac{j}{z-\zeta }+\frac{y-\eta }{x\xi }(\log
\left\vert z-\zeta \right\vert -1)+\frac{j}{\xi }\log \left\vert z-\zeta
\right\vert \\
& -\frac{f(\zeta _{0})}{f(\zeta )}\left( \frac{j}{z-\zeta _{0}}+\frac{y-\eta
_{0}}{x\xi _{0}}(\log \left\vert z-\zeta _{0}\right\vert -1)+\frac{j}{\xi
_{0}}\log \left\vert z-\zeta _{0}\right\vert \right)
\end{align*}%
where $z=x+jy$, $\zeta =\xi +j\eta $ and $\zeta _{0}=\xi _{0}+j\eta _{0}\in
\Omega $ is some fixed point. Here the function inside the brackets is a
regular solution of (\ref{ex1_suce_eq}) for $z$ belonging to a domain not
containing $\zeta _{0}$. Therefore 
\begin{equation}
Z_{1}^{(-1)}(j,\zeta ,z)=\frac{j}{z-\zeta }+\frac{y-\eta }{x\xi }(\log
\left\vert z-\zeta \right\vert -1)+\frac{j}{\xi }\log \left\vert z-\zeta
\right\vert  \label{ex1_suce_CKj}
\end{equation}%
is a Cauchy kernel of (\ref{ex1_suce_eq}) as well. Using Theorem \ref%
{Teo_rep_ker} it is easy to verify that the expressions (\ref{ex1_suce_CK1})
and (\ref{ex1_suce_CKj}) also represent a reproducing Cauchy kernel for (\ref%
{ex1_suce_eq}). The corresponding reproducing Cauchy kernel for the main
Vekua equation such that (\ref{Cauchykel1_MainVekua}) and (\ref%
{Cauchykel2_MainVekua}) hold has the form%
\begin{equation}
Z^{(-1)}(1,\zeta ,z)=\frac{1}{z-\zeta }+\frac{1}{\xi }\log \left\vert
z-\zeta \right\vert -j\frac{y-\eta }{x\xi }(\log \left\vert z-\zeta
\right\vert -1),  \label{ex1_main_CK1}
\end{equation}%
\begin{equation}
Z^{(-1)}(j,\zeta ,z)=\frac{j}{z-\zeta }-j\frac{1}{x}\log \left\vert z-\zeta
\right\vert .  \label{ex1_main_CKj}
\end{equation}
\end{example}

\subsection{Construction of negative formal powers for main Vekua equations}

In this subsection we explain how a set of negative formal powers for
equation (\ref{mainvekua}) can be constructed from a fundamental solution of
(\ref{sch}) satisfying properties $(C1)-(C2)$. In the previous section a
reproducing Cauchy kernel for equation (\ref{mainvekua}) was constructed.
Then, as was shown in Section 5, using such Cauchy kernel and by means of
formulas (\ref{RelaFP1}), (\ref{RelaFP2}) a set of negative formal powers
for (\ref{mainvekua}) can be obtained whenever a generating sequence
embedding $(\widehat{F},\widehat{G})$ is known, where $(\widehat{F},\widehat{%
G})$ is a generating pair corresponding to the adjoint equation of (\ref%
{mainvekua}). Notice that $(f,\frac{j}{f})$ is a generating pair for (\ref%
{mainvekua}) and by Proposition \ref{prop_secessor=adjoint} $(\widehat{F},%
\widehat{G})$ is a successor of $(f,\frac{j}{f})$. Hence it is sufficient to
know a generating sequence embedding $(f,\frac{j}{f})$. As was shown in \cite%
{Bers Book} (see also \cite{APFT}) when $f$ has a separable form $%
f=\phi(x)\psi(y)$ where $\phi$ and $\psi$ are arbitrary twice continuously
differentiable functions, there exists a periodic generating sequence with a
period two in which $(f,\frac{j}{f})$ is embedded,%
\begin{equation*}
(F,G)=\left( \phi\psi,\frac{\,j}{\phi\psi}\right) ,\quad(F_{1},G_{1})=\left( 
\frac{\psi}{\phi},\,\frac{j\phi}{\psi}\right) ,\quad (F_{2},G_{2})=\left(
F,G\right) ,\quad(F_{3},G_{3})=(F_{1},G_{1}),\ldots
\end{equation*}
(methods for construction of generating sequences in more general situations
are discussed in \cite{KrRecDev}, \cite{APFT} and \cite{Tranplante Op}).

\begin{example}
Consider $f=x$ and the corresponding main Vekua equation $\partial _{%
\overline{z}}W=\frac{1}{2x}\overline{W}$. The negative formal powers
constructed according to the described above procedure beginning with the
reproducing Cauchy kernel (\ref{ex1_main_CK1}), (\ref{ex1_main_CKj}) have
the form%
\begin{equation*}
Z^{(-2)}(1,\zeta,z)=\frac{1}{\left( z-\zeta\right) ^{2}}+\frac{j}{x}\limfunc{%
Vec}\frac{1}{z-\zeta},
\end{equation*}%
\begin{equation*}
Z^{(-2)}(j,\zeta,z)=\frac{j}{\left( z-\zeta\right) ^{2}}-\frac{1}{\xi}\frac{j%
}{z-\zeta}+\frac{j}{x}\left( \limfunc{Vec}\frac{1}{z-\zeta }+\frac{1}{\xi}%
\log\left\vert z-\zeta\right\vert \right) ,
\end{equation*}
for an odd $n\geq3$ 
\begin{equation*}
Z^{(-n)}(1,\zeta,z)=\frac{1}{\left( z-\zeta\right) ^{n}}-\frac{1}{(n-1)\xi }%
\frac{1}{\left( z-\zeta\right) ^{n-1}}-\frac{j}{(n-1)x}\limfunc{Sc}\left( 
\frac{j}{\left( z-\zeta\right) ^{n-1}}-\frac{1}{(n-2)\xi}\frac {j}{\left(
z-\zeta\right) ^{n-2}}\right) ,
\end{equation*}%
\begin{equation*}
Z^{(-n)}(j,\zeta,z)=\frac{j}{\left( z-\zeta\right) ^{n}}-\frac{j}{(n-1)x}%
\limfunc{Sc}\frac{1}{\left( z-\zeta\right) ^{n-1}}
\end{equation*}
and for an even $n\geq4$ 
\begin{equation*}
Z^{(-n)}(1,\zeta,z)=\frac{1}{\left( z-\zeta\right) ^{n}}+\frac{j}{(n-1)x}%
\limfunc{Vec}\frac{1}{\left( z-\zeta\right) ^{n-1}},
\end{equation*}%
\begin{equation*}
Z^{(-n)}(j,\zeta,z)=\frac{j}{\left( z-\zeta\right) ^{n}}-\frac{1}{(n-1)\xi }%
\frac{j}{\left( z-\zeta\right) ^{n-1}}+\frac{j}{(n-1)x}\limfunc{Vec}\left( 
\frac{j}{\left( z-\zeta\right) ^{n-1}}-\frac{1}{(n-2)\xi}\frac {j}{\left(
z-\zeta\right) ^{n-2}}\right) .
\end{equation*}
\end{example}

\subsection{Construction of fundamental solutions using the Darboux-type
transformation}

Let $f\in C^{2}(\Omega )$ be a nonvanishing particular solution of (\ref{sch}%
). Consider the operator $T_{f}$ defined by (\ref{Darbouxttransf}) that
transforms regular solutions of (\ref{sch}) into regular solutions of the
Darboux transformed equation (\ref{sch1}). We stress that direct application
of $T_{f}$ to a fundamental solution of (\ref{sch}) does not lead to a
fundamental solution of (\ref{sch1}). The result rather should be a
multivalued solution of (\ref{sch1}) with the behaviour similar to that of $%
\arg (z-\zeta )$ (the imaginary part of the function $\ln (z-\zeta )$). In
this subsection we describe a procedure to construct a fundamental solution
of (\ref{sch1}) from a known fundamental solution of (\ref{sch}).

Let $S(\zeta ,z)$ be a fundamental solution of (\ref{sch}) satisfying the
conditions $(C1)-(C3)$. Using formula (\ref{ck1_sucess_constr}) one can
construct $Z_{1}^{(-1)}(1,\zeta ,z)$ and use it to obtain $%
Z_{1}^{(-1)}(j,\zeta ,z)$ by means of (\ref{ckj_sucess_constr}). Then a
fundamental solution of (\ref{sch1}) is constructed as follows.

\begin{proposition}
Let $Z_{1}^{(-1)}(j,\zeta,z)$ be the Cauchy kernel given by (\ref%
{ckj_sucess_constr}) and characterized by Proposition \ref{prop_ckj_const}.
Then%
\begin{equation}
\begin{array}{rl}
S_{1}(z,\zeta) & :=Vec\dint _{z_{0}}^{z}Z_{1}^{(-1)}(j,\zeta,\tau)d_{(f,%
\frac{i}{f})}\tau\medskip \\ 
& =\frac{1}{f(z)}Vec\dint _{z_{0}}^{z}f(\tau)Z_{1}^{(-1)}(j,\zeta,\tau)d\tau%
\end{array}
\label{sf2_contr}
\end{equation}
is a fundamental solution of (\ref{sch1}) in $\Omega_{0}$ enjoying
properties $(C1)-(C3)$. Here $z_{0}$ is some fixed point different from $%
\zeta_{0}$ and such that $z_{0}\in\Omega\backslash\overline{\Omega_{0}}$.
\end{proposition}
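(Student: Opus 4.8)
The plan is to read the right-hand side of (\ref{sf2_contr}) as the vector part of an $(f,\frac{j}{f})$-antiderivative of the Cauchy kernel $Z_{1}^{(-1)}(j,\zeta,\cdot)$ and then to exploit the correspondence between the main Vekua equation (\ref{mainvekua}) and the Schr\"{o}dinger equation (\ref{sch1}). First I would check that the two expressions in (\ref{sf2_contr}) agree. By Definition \ref{DefAdjoint_bi}, using $\overline{f}=f$ since $f$ is $\mathbb{C}_{i}$-valued, the adjoint pair of $(f,\frac{j}{f})$ is $(f,\frac{j}{f})^{\ast}=(-jf,\frac{1}{f})$, so the $(f,\frac{j}{f})$-integral of a function $W$ equals $f(z)\limfunc{Sc}\int_{z_{0}}^{z}\frac{1}{f}W\,d\tau+\frac{j}{f(z)}\limfunc{Sc}\int_{z_{0}}^{z}(-jf)W\,d\tau$. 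Taking $\limfunc{Vec}$ annihilates the first ($\mathbb{C}_{i}$-valued) term and, using $\limfunc{Sc}(-jX)=\limfunc{Vec}X$, turns the second into $\frac{1}{f(z)}\limfunc{Vec}\int_{z_{0}}^{z}fW\,d\tau$, which is the claimed identity. This part is routine.

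Next I would establish that $S_{1}$ solves (\ref{sch1}) in the variable $z$. By Proposition \ref{prop_ckj_const} the kernel $Z_{1}^{(-1)}(j,\zeta,\cdot)$ is a solution of the successor equation (\ref{Main_vekua_suc}) of (\ref{mainvekua}), hence pseudoanalytic with respect to the successor pair of $(f,\frac{j}{f})$. Since $(f,\frac{j}{f})$ is a predecessor of that pair, Proposition \ref{propMorera}(ii) shows that $W(z,\zeta):=\int_{z_{0}}^{z}Z_{1}^{(-1)}(j,\zeta,\tau)d_{(f,\frac{j}{f})}\tau$ is $(f,\frac{j}{f})$-pseudoanalytic in $z$, hence a solution of (\ref{mainvekua}); Theorem \ref{schandmainvek} then gives that $\limfunc{Vec}W=S_{1}$ solves (\ref{sch1}) in $z$. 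Because the integrand has a pole at $\tau=\zeta$, the antiderivative $W$ is a priori multivalued in $\Omega_{0}\backslash\{\zeta\}$, so I would show that its period around $\zeta$ is $\mathbb{C}_{i}$-valued, whence $\limfunc{Vec}W=S_{1}$ is single-valued. Using the representation (\ref{ckj_sucess_repre}), the only possibly multivalued contribution comes from the pole term and reduces to $\oint f\,d\log|\tau-\zeta|$ over a small circle about $\zeta$, which vanishes because $\log|\tau-\zeta|$ is constant on that circle; this is exactly the mechanism behind the limit $\lim_{\varepsilon\rightarrow0}T_{f(\zeta),\Gamma_{\varepsilon}}=0$ used in Proposition \ref{prop_ckj_const}.

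For the solution property in the remaining variable $\zeta$ I would use the second form of (\ref{sf2_contr}). Writing $\tau=s+jt$ and $f(\tau)Z_{1}^{(-1)}(j,\zeta,\tau)=P+jQ$ with $P=f\limfunc{Sc}Z_{1}^{(-1)}(j,\zeta,\tau)$ and $Q=f\limfunc{Vec}Z_{1}^{(-1)}(j,\zeta,\tau)$, one has $\limfunc{Vec}\int_{z_{0}}^{z}f(\tau)Z_{1}^{(-1)}(j,\zeta,\tau)\,d\tau=\int_{z_{0}}^{z}(Q\,ds+P\,dt)$. By Proposition \ref{prop_ckj_const}(ii) both $P$ and $Q$ are, for each fixed $\tau$, solutions of (\ref{sch1}) in $\zeta$, and the factor $\frac{1}{f(z)}$ does not depend on $\zeta$; applying $\Delta_{\zeta}-q_{1}$ under the integral sign annihilates the integrand and yields $(\Delta_{\zeta}-q_{1})S_{1}=0$. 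The interchange of $\Delta_{\zeta}$ with the $\tau$-integration is justified by the continuity of the relevant $\zeta$-derivatives away from the diagonal, in the spirit of Corollary \ref{conv_sch}.

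Finally I would extract the singular structure and verify $(C1)$--$(C3)$. Substituting (\ref{ckj_sucess_repre}) into $S_{1}=\frac{1}{f(z)}\int_{z_{0}}^{z}(Q\,ds+P\,dt)$, the pole term produces $\frac{1}{f(z)}\int_{z_{0}}^{z}f\,d\log|\tau-\zeta|=\log|z-\zeta|+(\text{lower order})$ as $z\rightarrow\zeta$, which fixes the fundamental-solution normalisation $S_{1}=\log|z-\zeta|+R_{1}$; the logarithmic and condition-$I$ terms of (\ref{ckj_sucess_repre}) contribute the remainder $R_{1}$. The main obstacle is precisely the verification of $(C2)$--$(C3)$: one must show that $R_{1}\in C^{1}$ off the diagonal, that it is bounded near it, and that its first $\zeta$-derivatives are of order $\mathcal{O}(\log|z-\zeta|)$. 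I expect this to require the same delicate path-splitting $\Gamma=\Gamma_{1}\cup\Gamma_{2}\cup\Gamma_{3}$ and mean-value estimates carried out in the proof of Proposition \ref{prop_ckj_const}(iii), now applied to the $\tau$-integrated kernel rather than to $T_{f(\zeta)}$ directly.
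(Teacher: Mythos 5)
Your proposal is correct and follows essentially the same route as the paper's proof: substitute the representation (\ref{ckj_sucess_repre}) into the integral, extract the logarithmic singularity, control the remainder via condition I, and inherit the remaining properties (solution in both variables, $(C2)$--$(C3)$) from Proposition \ref{prop_ckj_const}. The differences are only in bookkeeping: the paper integrates the singular part in closed form using $\limfunc{Vec}\bigl[f(\tau)\bigl(\tfrac{j}{\tau-\zeta}+2j\tfrac{\partial_{\tau}f(\tau)}{f(\tau)}\log|\tau-\zeta|\bigr)d\tau\bigr]=d\bigl(f(\tau)\log|\tau-\zeta|\bigr)$, which yields $S_{1}=\log|z-\zeta|-\tfrac{f(z_{0})}{f(z)}\log|z_{0}-\zeta|+\tfrac{1}{f(z)}\limfunc{Vec}\int_{z_{0}}^{z}f(\tau)N(\zeta,\tau)\,d\tau$ exactly, whereas you treat the pole and logarithmic terms separately as bounded errors and instead make explicit the single-valuedness, path-independence, and solution-in-$\zeta$ arguments that the paper compresses into ``by construction'' and ``the remaining properties follow directly.''
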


\begin{proof}
By construction the integral on the right-hand side of (\ref{sf2_contr})
does not depend on the path joining $z_{0}$ with $z$ (obviously not passing
through the point $\zeta$). Substituting into (\ref{sf2_contr}) the
representation (\ref{ckj_sucess_repre}) of $Z_{1}^{(-1)}(j,\zeta,\tau)$ we
obtain

\begin{equation*}
\begin{array}{ll}
S_{1}(z,\zeta) & =\dfrac{1}{f(z)}Vec\dint _{z_{\ast}}^{z}f(\tau)\left( 
\dfrac{j}{\tau-\zeta}+2j\dfrac{\partial_{\tau}f(\tau)}{f(\tau )}%
\log\left\vert \tau-\zeta\right\vert +N(\zeta,\tau)\right) d\tau\medskip \\ 
& =\log\left\vert z-\zeta\right\vert -\dfrac{f(z_{0})}{f(z)}\log\left\vert
z_{0}-\zeta\right\vert +\dfrac{1}{f(z)}Vec\dint
_{z_{0}}^{z}f(\tau)N(\zeta,\tau)d\tau.%
\end{array}%
\end{equation*}
From the last equality and using the fact that $N(\zeta,\tau)$ satisfies
condition I we conclude that $S_{1}(z,\zeta)$ is indeed a fundamental
solution of (\ref{sch1}). The remaining properties follow directly from the
properties of $Z_{1}^{(-1)}(j,\zeta,\tau)$ established in Proposition \ref%
{prop_ckj_const}.
\end{proof}

\begin{example}
Let us consider the case described in Example \ref{example_f=x}. Using the
kernel (\ref{ex1_suce_CKj}) and formula (\ref{sf2_contr}) with $%
z_{0}=\zeta+1 $ we obtain the following fundamental solution for the
operator $\Delta-\frac {2}{x^{2}}I$%
\begin{equation*}
S_{1}(z,\zeta)=\log\left\vert z-\zeta\right\vert +\frac{\left\vert
z-\zeta\right\vert ^{2}}{2x\xi}\log\left\vert z-\zeta\right\vert -\frac{%
\left\vert z-\zeta\right\vert ^{2}+2(y-\eta)^{2}-1}{4x\xi}.
\end{equation*}
\end{example}

\begin{remark}
Since the fundamental solution (\ref{sf2_contr}) possesses the properties $%
(C1)-(C3)$ one can apply to it the above procedure and construct fundamental
solutions of new Schr\"{o}dinger equations obtained from (\ref{sch1}) by
applying further Darboux transformations. The procedure can be repeated a
finite number of times. In this way it is possible to obtain fundamental
solutions of several Schr\"{o}dinger equations in a closed form as well as
the negative formal powers for the corresponding main Vekua equations.
\end{remark}

\bigskip

\bigskip

\end{document}